\newif\iflogvar 
\newtheorem{theorem}{Theorem}[section]
\newtheorem{proposition}[theorem]{Proposition}
\newtheorem{corollary}[theorem]{Corollary}
\newtheorem{example}[theorem]{Example}
\newtheorem{conjecture}[theorem]{Conjecture}
\newtheorem{remark}[theorem]{Remark}
\newtheorem{definition}[theorem]{Definition}
\DeclareMathOperator{\HO}{H}
\newcommand{\dd}{\mathbf{d}}
\DeclareMathOperator{\wt}{wt}
\DeclareMathOperator{\Mst}{\mathfrak{M}}
\DeclareMathOperator{\LL}{\mathbb{L}}
\DeclareMathOperator{\tw}{tw}
\DeclareMathOperator{\sst}{sst}
\DeclareMathOperator{\nilp}{nilp}
\DeclareMathOperator{\ee}{\mathbf{e}}
\DeclareMathOperator{\znilp}{z-nilp}
\DeclareMathOperator{\pt}{pt}
\DeclareMathOperator{\dimvect}{\underline{\dim}}
\DeclareMathOperator{\lmod}{-mod}
\newcommand{\ghilb}[1]{Y_{#1}}
\newcommand{\QT}[1]{\mathcal{A}_{#1}}
\DeclareMathOperator{\gr}{Gr}
\newcommand{\gw}[1]{\gr^{\mathrm{W}}_{#1}}
\newcommand{\HHpts}{\mathcal{P}}
\newcommand{\HHstpts}{\mathcal{P}^{\mathrm st}}
\DeclareMathOperator{\Tors}{\mathfrak{T}ors}
\newcommand{\HHhilb}{\mathcal{H}}
\newcommand{\HHst}{\mathcal{H}^{\mathrm{stack}}}
\DeclareMathOperator{\exc}{ex}
\DeclareMathOperator{\Ad}{Ad}
\newcommand{\nn}{n\mathbf{d}_0}
\newcommand{\FF}{\mathcal{F}}
\newcommand{\slope}{\mu}
\newcommand{\OO}{\mathcal{O}}
\newcommand{\xx}{\mathbf{t}} 
\newcommand{\qq}{q} 
\newcommand{\ncHilb}{\mathrm{nc}\Hilb}
\newcommand{\w}{\mathbf{w}}
\newcommand{\half}{\frac{1}{2}}
\renewcommand{\mod}{\mathop{\rm mod}\nolimits}
\newcommand{\Coh}{\mathop{\rm Coh}\nolimits}
\newcommand{\supp}{\mathop{\rm Supp}\nolimits}
\renewcommand{\phi}{\varphi}
\newcommand{\Hom}{\mathop{\rm Hom}\nolimits}
\newcommand{\Ext}{\mathop{\rm Ext}\nolimits}
\newcommand{\Db}[1]{D^b(#1)}
\newcommand{\Irrep}{{\rm Irrep}}
\newcommand{\SL}{{\rm SL}}
\newcommand{\GL}{{\rm GL}}
\newcommand{\PExp}{{\rm PExp}}
\newcommand{\Hilb}{{\rm Hilb}}
\newcommand{\Tr}{{\rm Tr}}
\newcommand{\Sym}{{\rm Sym}}
\renewcommand{\tilde}{\widetilde}
\newcommand{\CC}{\mathbb{C}}
\newcommand{\D}{\mathcal D}
\newcommand{\N}{\mathbb N}
\newcommand{\C}{\mathbb C}
\newcommand{\Z}{\mathbb Z}
\renewcommand{\P}{\mathbb P}
\newcommand{\Q}{\mathbb Q}
\renewcommand{\sl}{{\mathfrak sl}}
\newcommand{\x}{{\bf x}}
\newcommand{\n}{{\bf n}}
\newcommand{\m}{{\bf m}}
\newcommand{\Rep}{\rm Rep}
\newcommand{\color}[6]{} 
\newcommand{\red}{{\rm red}}
\newcommand{\PPP}{\mathcal{P}}
\newcommand{\QQQ}{\mathcal{Q}}
\newcommand{\mymod}[2]{\int_mod:nn{#1}{#2}}  
\definecolor{0}{RGB}{240,240,240}  
\definecolor{1}{RGB}{0,204,204}       
\definecolor{2}{RGB}{0,0,0}       
\newcounter{x}
\newcounter{y}
\newcounter{z}
\newcommand\xaxis{210}
\newcommand\yaxis{-30}
\newcommand\zaxis{90}
\newcommand\topside[3]{
  \fill[fill=0, draw=black,shift={(\xaxis:#1)},shift={(\yaxis:#2)},
  shift={(\zaxis:#3)}] (0,0) -- (30:1) -- (0,1) --(150:1)--(0,0);
}
\newcommand\leftside[3]{
  \fill[fill=0, draw=black,shift={(\xaxis:#1)},shift={(\yaxis:#2)},
  shift={(\zaxis:#3)}] (0,0) -- (0,-1) -- (210:1) --(150:1)--(0,0);
}
\newcommand\rightside[3]{
  \fill[fill=0, draw=black,shift={(\xaxis:#1)},shift={(\yaxis:#2)},
  shift={(\zaxis:#3)}] (0,0) -- (30:1) -- (-30:1) --(0,-1)--(0,0);
}
\newcommand\cube[3]{
  \topside{#1}{#2}{#3} \leftside{#1}{#2}{#3} \rightside{#1}{#2}{#3}
}
\newcommand\planepartition[1]{
\setcounter{x}{-1}
\foreach \a in {#1} {
\addtocounter{x}{1}
\setcounter{y}{-1}
\foreach \b in \a {
\addtocounter{y}{1}
\setcounter{z}{-1}
 \foreach \c in {1,...,\b} {
 \addtocounter{z}{1}
 \cube{\value{x}}{\value{y}}{\value{z}}}
        }}}
\newcommand\coltopside[3]{
  \fill[fill=\mymod{\the\numexpr1+\value{z}+\value{x}+\value{y}-1}{3}, draw=black,shift={(\xaxis:#1)},shift={(\yaxis:#2)},
  shift={(\zaxis:#3)}] (0,0) -- (30:1) -- (0,1) --(150:1)--(0,0);
}
\newcommand\colleftside[3]{
  \fill[fill=\mymod{\the\numexpr1+\value{z}+\value{x}+\value{y}-1}{3}, draw=black,shift={(\xaxis:#1)},shift={(\yaxis:#2)},
  shift={(\zaxis:#3)}] (0,0) -- (0,-1) -- (210:1) --(150:1)--(0,0);
}
\newcommand\colrightside[3]{
  \fill[fill=\mymod{\the\numexpr1+\value{z}+\value{x}+\value{y}-1}{3}, draw=black,shift={(\xaxis:#1)},shift={(\yaxis:#2)},
  shift={(\zaxis:#3)}] (0,0) -- (30:1) -- (-30:1) --(0,-1)--(0,0);
}
\newcommand\colcube[3]{
  \coltopside{#1}{#2}{#3} \colleftside{#1}{#2}{#3} \colrightside{#1}{#2}{#3}
}
\newcommand\colorplanepartition[1]{
\setcounter{x}{-1}
\foreach \a in {#1} {
\addtocounter{x}{1}
\setcounter{y}{-1}
\foreach \b in \a {
\addtocounter{y}{1}
\setcounter{z}{-1}
\foreach \c in {1,...,\b} {
\addtocounter{z}{1}
\colcube{\value{x}}{\value{y}}{\value{z}}}}}}
\begin{document}

\title[{Enumerating coloured partitions}]{Enumerating coloured partitions \\ in 2 and 3 dimensions}
\author{Ben Davison}
\address{School of Mathematics, University of Edinburgh}
\email{ben.davison@ed.ac.uk}
\author{Jared Ongaro} 
\address{School of Mathematics, University of Nairobi}
\email{ongaro@uonbi.ac.ke}
\author{Bal\'azs Szendr\H oi}
\address{Mathematical Institute, University of Oxford}
\email{szendroi@maths.ox.ac.uk}
\begin{abstract} We study generating functions of ordinary and plane partitions coloured by the action of a finite subgroup of the corresponding special linear group. After reviewing known results for the case of ordinary partitions, we formulate a conjecture concerning a factorisation property of the generating function of coloured plane partitions that can be thought of as an orbifold analogue of a conjecture of Maulik et al., now a theorem, in three-dimensional Donaldson-Thomas theory. We study natural quantisations of the generating functions arising from geometry, discuss a quantised version of our conjecture, and prove a positivity result for the quantised coloured plane partition function under a geometric assumption.
\end{abstract}

\maketitle
\thispagestyle{empty}

\tableofcontents

\section{Introduction} 

Let $\PPP$ and $\QQQ$ denote the set of all {\em partitions}, respectively {\em plane partitions}. With a dimension shift, these can be defined 
as follows: a partition $\lambda\in\PPP$ is a finite subset $\lambda\subset\N^2$ of the non-negative lattice quadrant, stable under forces 
acting in the negative direction along each of the coordinate axes; a plane partition $\alpha\in\QQQ$ is a finite subset $\alpha\subset\N^3$ of the non-negative lattice octant with the same property (see Figure~\ref{fig:first}). In both cases, we will refer to the lattice points contained in a partition or plane partition as
{\em boxes}. 
There are other well-known equivalent definitions of these sets~\cite{And}, mostly less symmetric than the one given here. Both partitions and plane partitions have a measure of size $|\ |\colon\PPP\to\N$, respectively $|\ |\colon\QQQ\to\N$, given by ``counting the number of boxes''. 
The corresponding generating functions
\[E(t)= \sum_{\lambda\in\PPP}t^{|\lambda|}\]
and
\[ M(t)=\sum_{\alpha\in\QQQ}t^{|\alpha|} 
\]
have product expansions known since the 18th, respectively 19th centuries:
\begin{equation}E(t)= \prod_{m>0}(1-t^m)^{-1}\end{equation}
and
\begin{equation}\label{eq_mcmahon}M(t)=\prod_{m>0}(1-t^m)^{-m}.\end{equation}

\begin{figure}[htbp]

\begin{minipage}{.4\textwidth}
\ytableausetup
{boxsize=1.35em}
\ydiagram[*(0)]{2,3,4,4,6,8,8,10}
\end{minipage}
\hspace{2cm}
\begin{minipage}{.4\textwidth}
\begin{tikzpicture}[scale=0.55]
\planepartition{{5,3,2,2,2,2,1},{5,3,2,2},{2,2,1,1},{2,2},{1},{1}}
\end{tikzpicture}
\end{minipage}
\label{fig:first}
\caption{A partition and a plane partition}
\end{figure}
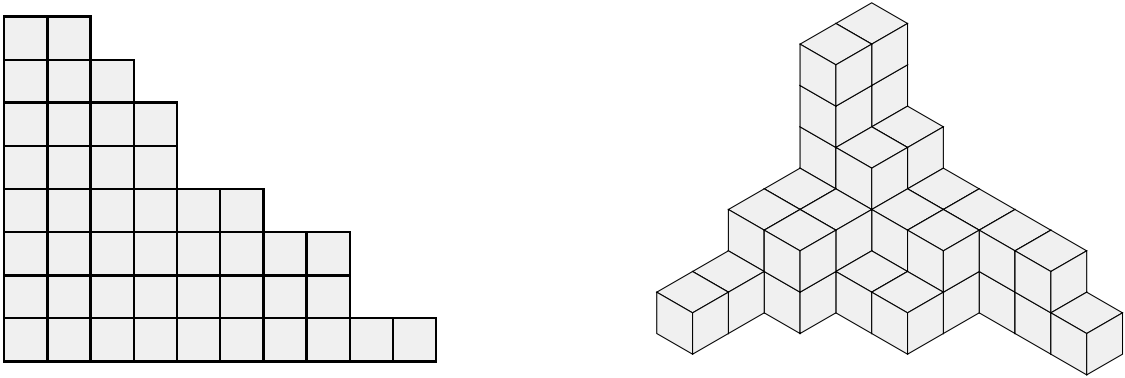

In this paper, we are interested in ``coloured" and ``quantized" versions of these generating functions, where the colours are coming from the action of a finite matrix group $G<\GL_d(\C)$ for $d=2,3$, while the quantization is coming from geometry. 
In fact we will only discuss in detail the case when $G<\SL_d(\C)$; there is 
more structure around in this case, and stronger results possible. 
We start in Section~2 by reviewing the well-known story for $d=2$ and $G<\SL_2(\C)$ abelian;
we then explain what can be said in a non-abelian case. Then we turn our attention to $d=3$, restricting to $G$ abelian. 
In Section 3, we state as Conjecture~\ref{main_conj} a factorisation property of the coloured generating function, that can be thought of as
an orbifold version of the famous conjecture of Maulik, Nekrasov, Okounkov and Pandharipande~\cite{MNOP}, now a theorem of Bridgeland and Toda, 
that states a formally analogous factorisation property of the Donaldson--Thomas partition function attached to a Calabi--Yau threefold (see Remark~\ref{rem:mnop}). 
We check one nontrivial consequence of our conjecture, and present some numerical evidence. We recall from
the literature some cases in which Conjecture~\ref{main_conj} is known, and discuss the structural features of the resulting formulae, using the language of the plethystic exponential. In Section 4, we discuss the relationship of the problem to the geometry of the quotient $\C^3/G$ 
and its minimal resolution(s), leading to a quantized version of the generating function, as well as Conjecture~\ref{qconj}, 
a quantized version of Conjecture~\ref{main_conj}. We close by explaining how Hodge theory 
and the theory of wall-crossing structures gives a framework to attack the quantized conjecture, and proves a related general positivity result,
Theorem~\ref{thm:positivity}, concerning the quantum partition function, under a geometric assumption.

For $d=3$, geometry dictates that the generating functions should include certain signs. In the combinatorial Section 3,
we will ignore these, at the expense of a somewhat unconventional definition of a twisted plethystic exponential $\PExp^\sigma$. 
The signed version of the generating function, defined in the geometric Section 4, correctly accounts for all these signs. 

In our treatment, the relationship between the results for partitions and those for plane partitions is purely formal; there is a very close similarity
between the shapes of the various formulae and their symmetries, but the proofs are different. 
It is hoped that the reader will find the parallels drawn
between the two stories sufficient justification for the joint discussion. 


\medskip

\noindent{\bf Notation} \ Given a set of variables $\xx=(t_0, \ldots, t_{r-1})$ and $\n=(n_0,\ldots, n_{r-1})\in\N^r$, denote $\xx^\n=\prod_i t_i^{n_i}$. 
Given a set of {\em non-negative} integers $a_\n\in\N$ for $\n\in\N^r\setminus\{0\}$ and choices of signs $\sigma_\n\in\{\pm 1\}$ for those $\n$ for which $a_\n\neq 0$, we define the twisted plethystic exponential
\[\PExp^\sigma\left(\sum_{\n\in\N^r\setminus\{0\}}\sigma_\n a_\n \xx^\n\right) = \prod_{\n\in\N^r\setminus\{0\}}(1-\sigma_\n \xx^\n)^{-\sigma_\n a_\n}.\]
Thus, according to this definition, for a single variable $t$,
\[\PExp^\sigma(t) = (1-t)^{-1}\]
but, unusually, 
\[\PExp^\sigma(-t) = 1+t.\]
In fact it is immediate to check that, when expanded around $\xx=0$,
\[\PExp^\sigma\left(\sum_{\n\in\N^r\setminus\{0\}}\sigma_\n a_\n \xx^\n\right) \in \N[[\xx]].\]
This is natural in the context of combinatorial generating functions, but of course destroys all algebraic properties of the ordinary plethystic exponential. With the correct choice of signs, all would be well. Notice in passing that (always expanding around $t=0$)
\[E(t)= \PExp^\sigma\left(\frac{t}{1-t}\right)
\]
and
\[M(t)= \PExp^\sigma\left(\frac{t}{(1-t)^2}\right).
\]
Some of the statements in the case $d=3$ are most conveniently expressed in terms of the functions
\[M(s,t)=\prod_{m>0}(1-st^m)^{-m} = \PExp^\sigma\left(\frac{st}{(1-t)^2}\right)
\]
and
\[\tilde M(s,t)=M(s,t)\cdot M(s^{-1},t) = \PExp^\sigma\left(\frac{(s+s^{-1})\, t}{(1-t)^2}\right);\]
note
\[\tilde M(-s,t)^{-1}= \PExp^\sigma\left(\frac{-(s+s^{-1})\,t}{(1-t)^2}\right).\]

Given a finite group $G$ with $r$ conjugacy classes, let $\Irrep(G)=\{\rho_0={\rm triv}, \rho_1, \ldots, \rho_{r-1}\}$ be the set of irreducible 
representations of $G$. For $G$ cyclic of order $r$, we label so that $\rho_k=\rho_1^{\otimes k}$; for other groups, we will explain 
the labelling explicitly. 

\medskip

\noindent{\bf Acknowledgements}  \ This paper benefitted from conversations and correspondence with Tom Bridgeland, Jim Bryan, Alastair Craw, \'Ad\'am Gyenge, Lotte Hollands, Paul Johnson, Sheldon Katz and Andrew Neitzke. It is partly based on work supported by the National Science Foundation under Grant No~1440140, done while the authors were in residence at the Mathematical Sciences Research Institute in Berkeley, California during the Spring of 2018. Support from the Mentoring African Research in Mathematics grant scheme of the London Mathematical Society, and the International Science Programme of Uppsala Univesity and Sida, is also acknowledged.  During the writing of the paper, BD was supported by the starter grant ``Categorified Donaldson--Thomas theory'' No. 759967 of the European Research Council, and a Royal Society research fellowship.

\section{Partitions}

\subsection{The abelian case: $r$-coloured partitions with diagonal colouring}
\label{diag_colouring}
As is well known, an abelian subgroup $G<\SL_2(\C)$ is necessarily cyclic, of some order $r\geq 1$. 
For concreteness, we fix an isomorphism $G\cong\mu_r$ with the group of $r$th roots of unity in~$\CC$, diagonally embedded in $\SL_2(\C)$ in such a way that the action is given by 
\[
\xi\mapsto \left(\begin{array}{cc} \xi& 0\\ 0&\xi^{-1}\end{array}\right).
\]
We label representations of $G$ so that $\C^2\cong \rho_1\oplus\rho_{r-1}$ as $G$-representations.
We will call this action $\mu_r(1,r-1)$ to record the weights.
The $G$-action extends to an action on the coordinate ring $\C[x,y]$ that we can identify with the semigroup ring $\C[\N^2]$. Each monomial 
$x^iy^j\in \C[\N^2]$ spans a one-dimensional $G$-eigenspace in the representation $\rho_{i-j \mod r}$; this gives a diagonal colouring (labelling) 
of the monomials in $\C[\N^2]$ with characters of $G$; see Figure \ref{fig:pa_nd_pp}.

\begin{figure}[htbp]
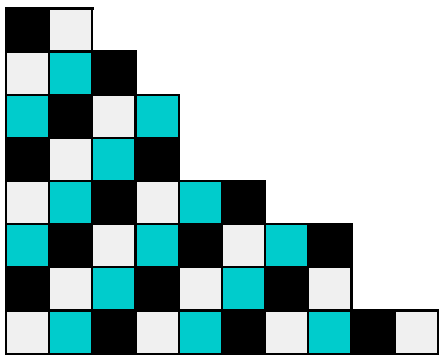

\ytableausetup
{boxsize=1.35em}
 \ytableausetup{nosmalltableaux}
  \begin{ytableau}
   *(2) & *(0)    \\
    *(0) & *(1)  & *(2)  \\
 *(1) &  *(2)  & *(0) & *(1) \\
  *(2) & *(0)  & *(1) & *(2) \\
  *(0) & *(1) & *(2) & *(0)  & *(1) & *(2)  \\
*(1) & *(2) & *(0)  & *(1) & *(2) & *(0) & *(1) & *(2)  \\
 *(2) & *(0)  & *(1) & *(2) & *(0)  & *(1) & *(2) & *(0)\\
 *(0) & *(1) & *(2) & *(0)  &*(1) & *(2) & *(0)  & *(1) & *(2) & *(0) 
  \end{ytableau}

 \caption{The partition from Figure 1 coloured by the group $\mu_3(1,-1)<\SL_2$}
\label{fig:pa_nd_pp}
\end{figure}

Given a partition $\lambda\subset\N^2$, we can define an $r$-tuple of non-negative integers 
\[\w(\lambda) = (w_0(\lambda), \ldots, w_{r-1}(\lambda))\]
counting the number of boxes (monomials) in $\lambda$ of colour $j$ for each $j$. Clearly
\[|\lambda| = \sum_{j=0}^{r-1} w_j(\lambda).
\]
One can then form the generating function
\[ Z^{\mu_r(1,r-1)}(\xx) = \sum_{\lambda\in\PPP}\xx^{\w(\lambda)}
\]
for a set of variables $\xx=(t_0, \ldots, t_{r-1})$. Specialising to $t_i=t$ recovers $E(t)$. The following reasonably standard
result gives a closed formula for this generating function.

\begin{theorem} Setting $t=\prod_{j=0}^{r-1}t_j$ to be the product of the variables, there is a factorisation
\begin{equation}Z^{\mu_r(1,r-1)}(\xx) = E(t)^r \cdot Z_\red^{\mu_r(1,r-1)}(\xx)
\label{2dfactorisation}
\end{equation}
into a power of the function $E(t)$ and a {\em reduced series} $Z_\red^{\mu_r(1,r-1)}\in\N[[\xx]]$. This latter 
series is itself a generating function
\begin{equation} Z_\red^{\mu_r(1,r-1)}(\xx) = \sum_{\lambda\in\PPP^{(r)}}\xx^{\w(\lambda)},
\label{2dreduced}
\end{equation}
where $\PPP^{(r)}\subset\PPP$ is the set of {\em $r$-core partitions}~\cite{JK}. Moreover, it admits the following expression:
\[Z_\red^{\mu_r(1,r-1)}(\xx) = \sum_{\m\in\Lambda_{A_{r-1}}}x^{\frac{\langle \m, \m\rangle}{2}}\prod_{i=1}^{r-1} t_i^{m_i},
\]
where $\Lambda_{A_{r-1}}\cong (\Z^{r-1}, \langle\  \rangle)$ is the type $A_{r-1}$ root (Cartan) lattice, the lattice corresponding to the Dynkin diagram of type
$A_{r-1}$. 
\label{thm_abelian2d}
\end{theorem}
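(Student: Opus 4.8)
The plan is to prove the three assertions in sequence: (i) the factorisation \eqref{2dfactorisation}, (ii) the identification of the reduced series with the $r$-core generating function \eqref{2dreduced}, and (iii) the theta-function-type closed formula indexed by the root lattice $\Lambda_{A_{r-1}}$. The conceptual engine is the classical bijection (see~\cite{JK}) that decomposes an arbitrary partition $\lambda$ into its \emph{$r$-core} $\operatorname{core}_r(\lambda)\in\PPP^{(r)}$ together with its \emph{$r$-quotient} $(\lambda^{(0)},\dots,\lambda^{(r-1)})\in\PPP^r$, an ordinary $r$-tuple of partitions. The key numerical facts I would establish are: first, that under this bijection $|\lambda| = |\operatorname{core}_r(\lambda)| + r\sum_{k}|\lambda^{(k)}|$; and second, that the refined colour vector behaves additively, namely $\w(\lambda) = \w(\operatorname{core}_r(\lambda)) + \bigl(\sum_k|\lambda^{(k)}|\bigr)\cdot(1,1,\dots,1)$, i.e.\ the $r$-quotient contributes only to the ``diagonal'' variable $t=\prod_j t_j$ and symmetrically across all colours. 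The cleanest way to see the colour statement is via the abacus/Maya-diagram description of $\lambda$: removing a rim hook of size $r$ (a bead move on the abacus) removes exactly one box of each residue class $\bmod\, r$ along the anti-diagonal, hence shifts $\w$ by $(1,\dots,1)$ and shifts $|\lambda|$ by $r$; iterating reduces $\lambda$ to its core while peeling off the quotient data, and the count of such moves is $\sum_k|\lambda^{(k)}|$.

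Granting these two facts, assertion (i) is a formal consequence: summing $\xx^{\w(\lambda)}$ over all $\lambda$ and grouping by the core, the $r$-quotient sum factors out as $\bigl(\sum_{\mu\in\PPP} t^{|\mu|}\bigr)^r = E(t)^r$ while the leftover sum over cores is exactly $Z_\red^{\mu_r(1,r-1)}(\xx) = \sum_{\lambda\in\PPP^{(r)}}\xx^{\w(\lambda)}$, which is assertion (ii). Positivity $Z_\red\in\N[[\xx]]$ is then automatic.

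For assertion (iii), I would use the standard parametrisation of $r$-cores by the root lattice $\Lambda_{A_{r-1}}$. Concretely: an $r$-core is determined by its abacus in a \emph{balanced} normalisation, i.e.\ by the vector $\m=(m_1,\dots,m_{r-1})\in\Z^{r-1}$ recording, on each of the $r$ runners, the (signed) position of the beads relative to the empty-core configuration, subject to $\sum m_i = 0$ after including the $0$th coordinate $m_0 = -\sum_{i\ge1} m_i$. One then checks the two identities $|\lambda| = \tfrac12\langle\m,\m\rangle$ (this is the classical ``$r$-core size $=$ quadratic form'' formula, e.g.\ Garvan--Kim--Stanton) and $w_i(\lambda) - w_0(\lambda) = m_i$ for the residue refinement (the latter is where the diagonal colouring $x^iy^j\mapsto\rho_{i-j}$ feeds in: the net colour imbalance of a core is read off directly from how far each runner's beads have drifted). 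Substituting these into \eqref{2dreduced} and writing $x = t$ gives the claimed theta series $\sum_{\m\in\Lambda_{A_{r-1}}} x^{\langle\m,\m\rangle/2}\prod_{i=1}^{r-1}t_i^{m_i}$.

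The main obstacle, and the step deserving the most care, is pinning down the refined colour statement $\w(\lambda) = \w(\operatorname{core}_r(\lambda)) + \bigl(\sum_k|\lambda^{(k)}|\bigr)(1,\dots,1)$ and its core counterpart $w_i(\lambda)-w_0(\lambda)=m_i$ with the \emph{correct} indexing conventions — i.e.\ matching the paper's labelling $\rho_k = \rho_1^{\otimes k}$, the weight convention $\mu_r(1,r-1)$, and the abacus normalisation, so that the signs and the identification of $\langle\,,\,\rangle$ with the $A_{r-1}$ Cartan form come out exactly as stated rather than off by a twist or a reflection. The unrefined statements ($|\lambda|$ and $E(t)^r$) are textbook; it is the equivariant bookkeeping under the diagonal $\mu_r$-action that needs to be done honestly, most transparently by tracking a single rim-hook removal on the Young diagram drawn with its anti-diagonal residue labelling as in Figure~\ref{fig:pa_nd_pp}.
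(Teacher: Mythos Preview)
Your proposal is correct and follows essentially the same route as the paper: the paper's proof is only an outline, invoking the $r$-core/$r$-quotient bijection $\PPP \leftrightarrow \PPP^r \times \PPP^{(r)}$ from~\cite{JK} for the factorisation and the further bijection $\PPP^{(r)} \leftrightarrow \Lambda_{A_{r-1}}$ (with reference to~\cite{GKS}, i.e.\ Garvan--Kim--Stanton) for the closed form, exactly as you do. Your write-up is in fact considerably more detailed than the paper's, supplying the refined colour bookkeeping (rim-hook removal deletes one box of each residue, hence the quotient contributes only along the diagonal $(1,\dots,1)$) that the paper leaves implicit.
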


\begin{proof} We recall the outline of the argument. The process of taking the $r$-quotient of a partition~\cite{JK} gives a combinatorial bijection 
\[
\PPP \longleftrightarrow \PPP^r \times \PPP^{(r)}
\] 
between the set of partitions, and $r$-tuples of partitions together with an $r$-core.  
This gives the factorisation formula~\eqref{2dfactorisation}, together with the interpretation~\eqref{2dreduced}. 
The reduced partition function can then be determined with the help of a further bijection
\[
\PPP^{(r)} \longleftrightarrow \Lambda_{A_{r-1}}.
\]
For details, see for example~\cite{GKS}.
\end{proof}
\begin{example}\rm For $r=2$, we are colouring partitions by the checkerboard colouring. The $2$-cores are the staircase partitions. We get the generating function of checkerboard-coloured partitions
\[Z^{\mu_2(1,1)}(t_0,t_1) = E(t_0t_1)^2 \cdot \sum_{m=-\infty}^{\infty} t_0^{m^2} t_1^{m^2+m}.
\]
The sum on the right hand side is essentially the sum side of the Jacobi triple product identity, and so it also admits an infinite product form. 
\end{example}

\subsection{Geometry and Lie theory}

Given a finite subgroup $G<\SL_2(\C)$, the corresponding action on $\C^2$ preserves the two-form $dx\wedge dy$. The
quotient space $X=\C^2/G$ has a rational (du Val, simple) surface singularity at the origin, and is smooth elsewhere. 
It admits a unique Calabi--Yau resolution $\pi\colon Y\to X$. 
The exceptional curves $E_i\subset Y$ of $\pi$ are all rational, and form an intersection configuration that is described by a simply-laced, connected Dynkin
diagram, necessarily of $ADE$ type. For a cyclic $G<\SL_2(\C)$ of size $r$, we get the type $A_{r-1}$ configuration, giving a superficial 
reason for the appearance of the corresponding root lattice above. 

The $2$-dimensional McKay correspondence~\cite{Rei} gives a different appearance of the Dynkin diagram. As we recall in the next section, McKay defined, using the embedding $G<\SL_2(\C)$,
a graph structure on the set $\Irrep(G)=\{\rho_0={\rm triv}, \rho_1, \ldots, \rho_{r-1}\}$. This turns out to give an {\em affine} Dynkin diagram, of the same type, with distinguished node $\rho_0$ whose removal returns the finite Dynkin diagram.

The appearance of the finite and affine Dynkin diagrams indicates a possible role for Lie algebras in this picture. The subjects are indeed
intimately connected; this is a very large area, so 
we only mention the issue relevant for the present discussion. For type $A$, the set of all $r$-coloured partitions gives one combinatorial 
model of the so-called {\em crystal basis}~\cite{Kan} of the basic representation of the affine Lie algebra $\widehat{{\mathfrak gl}}_r$, an infinite-dimensional Lie algebra closely related to the affine root system $\hat A_{r-1}$. From a representation-theoretic point of view, the formulas \eqref{2dfactorisation}-\eqref{2dreduced} compute the character of the basic representation of this Lie
algebra, compatibly with the Frenkel--Kac construction. For details from the present point of view, see \cite[Section 4 and Appendix A]{GyNSz}. 
For the special case $r=1$, $\widehat{{\mathfrak gl}}_1$ is just the infinite dimensional Heisenberg algebra, acting on its standard (fermionic) Fock
space representation. 

\subsection{Partitions and Euler characteristics of Hilbert schemes}\label{sec:2dHilb}
In this subsection we again focus on the cyclic/type A case, with $G\cong\mu_r$ acting with weights $(1,-1)$ as above.
Denote by \[\Hilb(\CC^2)=\sqcup_n\Hilb^n(\CC^2)\] the Hilbert scheme of points of $\CC^2$, whose $\CC$-points consist of finite-codimension ideals of $H^0(\mathcal{O}_{\CC^2})\cong \CC[x,y]$. 
The $G$-fixed locus $\Hilb(\CC^2)^G\subset\Hilb(\CC^2)$ can also be thought of 
as the Hilbert scheme $\Hilb([\CC^2/G])$ attached to the orbifold $[\CC^2/G]$, as 
any invariant ideal is $G$-equivariant in a canonical way. This fixed locus naturally decomposes as
\[\Hilb([\CC^2/G])= \bigsqcup_{\rho\in\Rep(G)}\Hilb^{\rho}([\CC^2/G]),\]
where for a finite-dimensional representation $\rho\in\Rep(G)$, we denote
\[\Hilb^{\rho}([\CC^2/G])=  \{ I \in \Hilb(\CC^2)^G \colon  (\CC[x,y]/I) \simeq_G \rho \}\]
the $\rho$-Hilbert scheme of the orbifold $[\CC^2/G]$, with $\simeq_G$ denoting $G$-equivariant isomorphism.  Each of the spaces in the decomposition is smooth; this is a consequence of the smoothness of $\Hilb(\CC^2)$.  

Given a point in $\Hilb([\CC^2/G])$, the associated $\mathbb{C}[x,y]$-module $\mathcal{F}=(\CC[x,y]/I)$ decomposes canonically into a direct sum of weight spaces $\mathcal{F}=\oplus_{0\leq i\leq r-1}\mathcal{F}_i$, where $\mathcal{F}_i$ is a direct sum of copies of the irreducible representation $\rho_i$.  The action of~$x$ on $\mathcal{F}$ cyclically permutes these summands, sending $\mathcal{F}_i$ to $\mathcal{F}_{i+1}$ (where $i$ is taken modulo $r$) and the action of~$y$ cyclically permutes these summands in the opposite direction.  A $G$-equivariant $\mathbb{C}[x,y]$-representation, then, corresponds to a representation of a quiver with vertices indexed by the irreducible $G$-representations, and arrows in both directions between~$i$ and~$i+1$ (taken modulo $r$).  We label the arrows starting at~$i$ corresponding to the action of~$x$ and~$y$ with labels~$x_i$ and~$y_i$ respectively.  The resulting oriented graph is the \textit{McKay quiver} $Q$ for $G$, and is the double of the McKay graph mentioned above.  This quiver can be defined outside of type A, and for arbitrary $G$-representations: the number of arrows from $i$ to $j$ is defined to be the number of copies of $\rho_j$ inside $V\otimes \rho_i$.  

Coming back to our example of $G<\SL_2(\CC)$, a representation of this quiver corresponds to a $G$-equivariant representation of $\CC[x,y]$ precisely if the commutativity relations $y_{i+1}x_i=x_{i-1}y_i$ hold.  The quotient of the free path algebra of $Q$ by these relations is known as
the {\em preprojective algebra}. 

Let $T$ be the subgroup of diagonal matrices in $\GL_2(\CC)$. Then $T$ acts on $\Hilb(\mathbb{C}^2)$ and this action moreover commutes with the $G$-action, and hence induces an action on each component $\Hilb^{\rho}([\CC^2/G])$; this is where we are making essential use of the assumption that we are in type A.  The fixed points of this action correspond to finite-codimension $\mathbb{Z}^2$-graded ideals $\mathcal{I}\subset \CC[x,y]$.  The monomial basis of $\CC[x,y]/\mathcal{I}$ then defines a coloured partition as in Section \ref{diag_colouring}.  Summing up, for $G=\mu_r(1,r-1)$, there is an equality
\[
Z^{G}(t_0,\ldots,t_{r-1})=\sum_{\dd\in\mathbb{N}^r} \chi(\Hilb^{\rho_{\dd}}([\CC^2/G]))\xx^{\dd}
\]
where $\rho_{\dd}=\bigoplus_{0\leq i \leq r-1}\rho_i^{\oplus d_{i}}$, connecting the geometry of Hilbert schemes with coloured partition counting.

This point of view also allows a deformation (or quantization) of this formula: we can replace Euler characteristics here by the Poincar\'e
polynomials, arriving at the generating functions

\[
Z^G(t_0,\ldots,t_{r-1}; \qq^{1/2})=\sum_{\dd\in\mathbb{N}^r} P(\Hilb^{\rho_{\dd}}([\CC^2/G]), \qq^{1/2})\xx^{\dd}
\]
where for a smooth variety $X$, we denote $P(X,\qq^{1/2}) = \sum_j \dim_\Q H^j(X,\Q)\qq^{j/2}$ its (topological) Poincar\'e polynomial. In the abelian case, 
this function is computed by~\cite{FM} to be
\begin{equation}Z^{\mu_r(1,r-1)}(\xx; \qq^{1/2}) = \left(\prod_{m>0} (1-t^m\qq^{m-1})^{-1}(1-t^m\qq^{m})^{-(r-1)} \right)\cdot Z_\red^{\mu_r(1,r-1)}(\xx).
\label{eq:typeAsurfacerefined}
\end{equation}

We will expand upon the relationship between combinatorics, quiver representations and quantized formulae further 
when we come to plane partitions.

\subsection{A nonabelian family of examples} 

Armed with a full understanding of the cyclic case, we can ask whether there are similar stories to be told for {\em nonabelian} subgroups 
$G<\SL_2(\C)$. Apart from cyclic groups, there is one other infinite set of such subgroups, as well as three sporadic ones. For the infinite series of binary dihedral groups, 
there is a precise analogue of the diagonally coloured partition story. While from the present point of view this is just an amusing 
extension of well-known ideas, we could not resist telling it. For geometric applications of the theory in this section, see~\cite{GyNSz}. 
 
The binary dihedral group\footnote{Another notation for this group would be $BD_{4r-8}$, but we will use this more economical notation.} 
$G=BD_{r}<\SL_2(\C)$ is generated by the matrices 
$\begin{pmatrix} \xi && 0 \\ 0 && \xi^{-1}\end{pmatrix}$ and $\begin{pmatrix} 0 && 1 \\ -1 && 0\end{pmatrix}$,
with $\xi$ a $(2r-4)$-th root of unity. It has order $4r-8$, and $r+1$ conjugary classes. In the correspondences
recalled above, it gives the (finite and affine) Dynkin diagrams of type $D_{r}$. It is natural to label the representations of $BD_{r}$ so 
that $\rho_0={\rm triv}$, $\rho_1$ is the sign representation, whose kernel is the abelian subgroup generated by the first matrix above, $\rho_2$ is
the irreducible representation coming from the embedding $BD_{r}<\SL_2(\C)$, $\rho_3,\ldots, \rho_{r-2}$ are further $2$-dimensional representations (for $r>4$) labelled so that $\rho_j\otimes\rho_2\cong \rho_{j-1}\oplus\rho_{j+1}$ for $j<r-2$, and finally $\rho_{r-1}$ and $\rho_r$ are again one-dimensional, with $\rho_{r-2}\otimes\rho_2\cong \rho_{r-3}\oplus\rho_{r-1}\oplus\rho_r$.

A search for a corresponding partition colouring story begins by recalling that the connection between groups and colourings in the abelian case 
was given by the semigroup ring $\C[\N^2]$ and its decomposition into irreducible $\mu_{r}$-spaces spanned by monomials. 
This does not quite work in the 
dihedral case, but it almost does. It can be checked that for most pairs $(i,j)$, the two-dimensional vector space generated by $x^iy^j$ and 
$x^jy^i$ forms an irreducible representation of $BD_{r}$; for special pairs $(i,j)$, it splits up into two eigenspaces (not generated by monomials). 
We get Figure~\ref{fig_Dn}, first exploited in this language in~\cite{GyNSz}. The diagram
encodes a full description of $\C[\N^2]$ and its decomposition into irreducible $BD_{r}$-spaces. There are $r+1$ different colours 
(labels), some labelling boxes and some half-boxes. Since we wish to study configurations that are equivariant with respect to the dihedral group, 
it is indeed enough to look at the octant of the plane represented on the figure, and then the full configuration is determined by reflection. 
Let us call this combinatorial arrangement the {\em $D_r$-coloured positive octant}.

\begin{figure}
\iflogvar
\centering
\begin{tikzpicture}[scale=0.6, font=\tiny, fill=black!20]
   \foreach \x in {4,5,6,7,8, 9,10,11,12, 13, 14, 15}
                  {
                       \draw (\x,4) -- (\x,4.4);
                       \draw (\x+0.8,5) node {\reflectbox{$\ddots$}};
                  }
  \draw (0,0 ) -- (15.4,0);
    \foreach \y in {1,2,3,4}
        {
          \draw (\y -1,\y ) -- (15.4,\y);
        }
    \foreach \y in {0}
    {
    \foreach \x in {0,1,2,3, 4,5,6,7,8, 9,10,11,12, 13, 14, 15}
                  {
                       \draw (\y+\x ,\y) -- (\y+\x ,\y+1);
                  }
    	\draw (\y+6,\y) -- (\y+5,\y+1);
    	\draw (\y+11,\y) -- (\y+10,\y+1);
    	\draw (\y+1.5,\y+0.5) node {2};
    	\draw (\y+4.5,\y+0.5) node {$r$$-$$2$};
    	\draw (\y+6.5,\y+0.5) node {$r$$-$$2$};
    	\draw (\y+14.5,\y+0.5) node {$r$$-$$2$};
	\draw (\y+9.5,\y+0.5) node {2};
    	\draw (\y+11.5,\y+0.5) node {2};
    	\draw(\y+2.5,\y+0.5) node {$\cdot$};
	\draw(\y+3.5,\y+0.5) node {$\cdot$};
	\draw(\y+7.5,\y+0.5) node {$\cdot$};
	\draw(\y+8.5,\y+0.5) node {$\cdot$};
	\draw(\y+12.5,\y+0.5) node {$\cdot$};
	\draw(\y+13.5,\y+0.5) node {$\cdot$};
	\draw(15.9,\y+0.5) node {\dots};
    }
    \foreach \y in {1}
    {
    \foreach \x in {0,1,2,3,4,5,6,7,8, 9,10,11,12, 13, 14}
                  {
                       \draw (\y+\x ,\y) -- (\y+\x ,\y+1);
                  }
    	\draw (\y+6,\y) -- (\y+5,\y+1);
    	\draw (\y+11,\y) -- (\y+10,\y+1);
    	\draw (\y+1.5,\y+0.5) node {2};
    	\draw (\y+4.5,\y+0.5) node {$r$$-$$2$};
    	\draw (\y+6.5,\y+0.5) node {$r$$-$$2$};
    	\draw (\y+9.5,\y+0.5) node {2};
    	\draw (\y+11.5,\y+0.5) node {2};
    	\draw(\y+3.5,\y+0.5) node {$\cdot$};
	\draw(\y+2.5,\y+0.5) node {$\cdot$};
	\draw(\y+8.5,\y+0.5) node {$\cdot$};
	\draw(\y+7.5,\y+0.5) node {$\cdot$};
	\draw(\y+13.5,\y+0.5) node {$\cdot$};
	\draw(\y+12.5,\y+0.5) node {$\cdot$};
	\draw(15.9,\y+0.5) node {\dots};
    }
    \foreach \y in {2}
    {
    \foreach \x in {0,1,2,3,4,5,6,7,8,9,10,11,12, 13}
                  {
                       \draw (\y+\x ,\y) -- (\y+\x ,\y+1);
                  }
    	\draw (\y+6,\y) -- (\y+5,\y+1);
    	\draw (\y+11,\y) -- (\y+10,\y+1);
    	\draw (\y+1.5,\y+0.5) node {2};
    	\draw (\y+4.5,\y+0.5) node {$r$$-$$2$};
    	\draw (\y+6.5,\y+0.5) node {$r$$-$$2$};
    	\draw (\y+9.5,\y+0.5) node {2};
    	\draw (\y+11.5,\y+0.5) node {2};
    	\draw(\y+3.5,\y+0.5) node {$\cdot$};
	\draw(\y+2.5,\y+0.5) node {$\cdot$};
	\draw(\y+8.5,\y+0.5) node {$\cdot$};
	\draw(\y+7.5,\y+0.5) node {$\cdot$};
	\draw(\y+12.5,\y+0.5) node {$\cdot$};
	\draw(15.9,\y+0.5) node {\dots};
    }

     \foreach \y in {3}
    {
    \foreach \x in {0,1,2,3,4,5,6,7,8, 9,10,11,12}
                  {
                       \draw (\y+\x ,\y) -- (\y+\x ,\y+1);
                  }
    	\draw (\y+6,\y) -- (\y+5,\y+1);
    	\draw (\y+11,\y) -- (\y+10,\y+1);
    	\draw (\y+1.5,\y+0.5) node {2};
    	\draw (\y+4.5,\y+0.5) node {$r$$-$$2$};
    	\draw (\y+6.5,\y+0.5) node {$r$$-$$2$};
    	\draw (\y+9.5,\y+0.5) node {2};
    	\draw (\y+11.5,\y+0.5) node {2};
    	\draw(\y+3.5,\y+0.5) node {$\cdot$};
	\draw(\y+2.5,\y+0.5) node {$\cdot$};
	\draw(\y+8.5,\y+0.5) node {$\cdot$};
	\draw(\y+7.5,\y+0.5) node {$\cdot$};
	\draw(15.9,\y+0.5) node {\dots};
    }

     \foreach \y in {0}
        {
      	    \draw (\y+0.5,\y+0.5) node {0};
        	\draw (\y+5.34,\y+0.25) node  {{\tiny $r$-1}};
        	\draw (\y+5.78,\y+0.75) node {$r$};
        	\draw (\y+10.75,\y+0.65) node {0};
        	\draw (\y+10.25,\y+0.35) node {1};
        }
             \foreach \y in {2}
        {
      	    \draw (\y+0.5,\y+0.5) node {0};
        	\draw (\y+5.34,\y+0.25) node  {{\tiny $r$-1}};
        	\draw (\y+5.78,\y+0.75) node {$r$};
        	\draw (\y+10.75,\y+0.65) node {0};
        	\draw (\y+10.25,\y+0.35) node {1};
        }

        \foreach \y in {1,3}
          {
              	    \draw (\y+0.5,\y+0.5) node {1};
                	\draw (\y+5.25,\y+0.25) node  {$r$};
       	\draw (\y+5.69,\y+0.80) node  {{\tiny $r$-1}};
                	\draw (\y+10.75,\y+0.65) node {1};
                	\draw (\y+10.25,\y+0.35) node {0};
          }
\end{tikzpicture}
\smallskip 
\fi
\caption{The $D_r$-coloured positive octant; single dots in boxes denote a run of numbers between $2$ and $r-2$}
\label{fig_Dn}
\end{figure}

The correct generalisation of partitions can be derived from both the geometric and the representation theoretic points of view. 
One is led to a combinatorial set $\PPP_{D_r}$ of {\em $D_r$-partitions}, finite subsets of the $D_r$-coloured positive octant. 
The precise definition~\cite{KK, GyNSz} is as follows: a finite subset $\beta$ of the {$D_r$-coloured positive octant} 
will be called a {\em $D_r$-partition}, if the following rules are satisfied: 

\begin{enumerate}
\item if a full box is not present in $\beta$, then all full or half boxes to the right of it are also not in $\beta$, and at least one (full or half) box immediately above it is not in $\beta$;
\item if a full or half-box is missing from $\beta$, then all the boxes on the diagonal in the same position, above and to the right of this box, are also missing;
\item if a half box is not in $\beta$, then the full box to the right of it is also not in $\beta$;
\item if both half-boxes sharing the same box position are missing from $\beta$, then the full box immediately above this position is also not
in $\beta$. 
\end{enumerate}

Thus the rules that define these objects are ``partition-like'' at the full boxes; indeed, (1) and (2) (which in the case of ordinary partitions is
superfluous) give the standard definition of ordinary partitions. However, the half-boxes have their own, different rules, leading to interesting
complications in the theory. Figure~\ref{fig:D4} shows some $D_r$-partitions for $r=4$. 

\begin{figure}
\iflogvar
\centering
\begin{tikzpicture}[scale=0.6, font=\footnotesize, fill=black!20]
 \draw (1, 0) -- (5,0);
 \draw (1, 1) -- (5,1);
  \foreach \x in {1,2,3,4,5}
    {
      \draw (\x, 0) -- (\x,1);
    }
  \draw (1.5, 0.5) node {0};    
  \draw (2.5, 0.5) node {2};
  
  \draw (3.25, 0.25) node {3};
  \draw (3,1)--(4,0);
  \draw (3.75, 0.75) node {4};

\draw (4.5, 0.5) node {2};

\draw (2,1)--(2,2)--(3,2)--(3,1);
  \draw (2.5, 1.5) node {1};
\end{tikzpicture}\qquad
\begin{tikzpicture}[scale=0.6, font=\footnotesize, fill=black!20]
 \draw (1, 0) -- (4,0);
 \draw (1, 1) -- (4,1);
  \foreach \x in {1,2,3,4}
    {
      \draw (\x, 0) -- (\x,1);
    }
  \draw (1.5, 0.5) node {0};    
  \draw (2.5, 0.5) node {2};
  
  \draw (3.25, 0.25) node {3};
  \draw (3,1)--(4,0);
  \draw (3.75, 0.75) node {4};
\draw (2,1)--(2,2)--(3,2)--(3,1);
  \draw (2.5, 1.5) node {1};
\draw (3,2)--(4,2)--(4,1);
\draw (3.5, 1.5) node {2};
\end{tikzpicture}\qquad
\begin{tikzpicture}[scale=0.6, font=\footnotesize, fill=black!20]
 \draw (1, 0) -- (3,0);
 \draw (1, 1) -- (4,1);
  \foreach \x in {1,2,3}
    {
      \draw (\x, 0) -- (\x,1);
    }
  \draw (1.5, 0.5) node {0};    
  \draw (2.5, 0.5) node {2};
  \draw (3,1)--(4,0)--(4,1);
  \draw (3.75, 0.75) node {4};
\draw (2,1)--(2,2)--(3,2)--(3,1);
  \draw (2.5, 1.5) node {1};
\draw (3,2)--(4,2)--(4,1);
\draw (3.5, 1.5) node {2};

 \draw (4,2)--(5,1)--(5,2)--(4,2);
\draw (4.75, 1.75) node {3};
\end{tikzpicture}\qquad
\begin{tikzpicture}[scale=0.6, font=\footnotesize, fill=black!20]
 \draw (1, 0) -- (6,0);
  \foreach \x in {1,2,3}
    {
      \draw (\x, 0) -- (\x,\x);
    }
  
   \foreach \y in {1,2,3}
       {
            \draw (\y,\y) -- (6,\y);
       }
 \draw (4, 0) -- (4,3);
\draw (5, 0) -- (5,3);
 \draw (6, 0) -- (6,3);
 \draw (6, 3) -- (7,3) -- (7,2) --(6,2);

\foreach \x in {0,1,2}
        {
        	\draw (\x+2.5, \x+0.5) node {2};
        }
  \foreach \x in {0,1,2}
       {
           \draw (\x+4.5, \x+0.5) node {2};
          }   
     
     \foreach \x in {0}
         {
                	\draw (\x+1.5, \x+0.5) node {0};
                	\draw (\x+2.5, \x+1.5) node {1};
                }
       \draw (3.5, 2.5) node {0};
 \foreach \x in {0,2}
           {
           \draw (\x+3.75,\x+0.75) node {4};
                        	\draw (\x+3.25,\x+0.25) node {3};
                        	\draw (\x+3,\x+1) -- (\x+4,\x+0);
           }
  \foreach \x in {1}
             {
             \draw (\x+3.75,\x+0.75) node {3};
                                     	\draw (\x+3.25,\x+0.25) node {4};
                                     	\draw (\x+3,\x+1) -- (\x+4,\x+0);
             }
      \foreach \x in {0}
                    {
                    \draw (\x+5.75,\x+0.75) node {0};
                                 	\draw (\x+5.25,\x+0.25) node {1};
                                 	\draw (\x+6,\x) -- (\x+5,\x+1);
                    }
     \draw (6,2)--(7,1)--(7,2);
     \draw (6.75,1.75) node {1};
     \draw (7,3)--(8,2)--(8,3)--(7,3);
     \draw (7.75,2.75) node {0};
     
\end{tikzpicture}
\fi
\caption{Some $D_4$-partitions}
\label{fig:D4}
\end{figure}

Be that as it may, to a $D_r$-partition $\beta\in\PPP_{D_r}$ we can associate a set of $r+1$ nonnegative integers \[\w(\beta)=(w_0(\beta), \ldots, w_r(\beta))\]
counting the numbers of (full or half) boxes in $\beta$ of different colours (labels). For a set of variables $\xx=(t_0, \ldots, t_r)$,
we are lead to the generating function
\[ Z^{BD_{r}}(\xx) = \sum_{\beta\in\PPP_{D_r}}\xx^{\w(\beta)}.
\]
The following result is the precise analogue of Theorem~\ref{thm_abelian2d} above. 

\begin{theorem} Set $t=t_0t_1t_{r-1}t_r\prod_{j=2}^{r-2}t_j^2$. Then there is a factorisation
\begin{equation}Z^{BD_{r}}(\xx) = E(t)^{r+1} \cdot Z_\red^{BD_{r}}(\xx)
\label{2dfactorisation_Dn}
\end{equation}
into a power of the function $E(t)$ and a reduced series $Z_\red^{BD_{r}}\in\N[[\xx]]$. This latter 
series is itself a generating function
\begin{equation} Z_\red^{BD_{r}}(\xx) = \sum_{\lambda\in\PPP_{D_r}^{(r)}}\xx^{\w(\lambda)},
\end{equation}
where $\PPP_{D_r}^{(r)}\subset\PPP$ is a certain set of {\em core $D_r$-partitions}. Moreover, this function admits the following expression:
\[Z_\red^{BD_{r}}(\xx) = \sum_{\m\in\Lambda_{D_r}}t^{\frac{\langle \m, \m\rangle}{2}}\prod_{i=1}^{r} t_i^{m_i},
\]
where $\Lambda_{D_r}\cong (\Z^{r}, \langle\  \rangle)$ is the type $D_{r}$ root (Cartan) lattice, the lattice corresponding to the Dynkin diagram of type
$D_{r}$. 
\label{thm_Dnseries}
\end{theorem}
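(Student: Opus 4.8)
The plan is to follow the strategy of the proof of Theorem~\ref{thm_abelian2d}, replacing the $r$-quotient/$r$-core machinery of the symmetric group by its type $D$ counterpart. Concretely, I would establish two bijections. First, a ``$D_r$-quotient'' map
\[
\PPP_{D_r}\;\longleftrightarrow\;\PPP^{r+1}\times\PPP_{D_r}^{(r)},
\]
peeling off an $(r+1)$-tuple of ordinary partitions from any $D_r$-partition and leaving behind a \emph{core} $D_r$-partition; the set $\PPP_{D_r}^{(r)}$ is by definition the image of the zero tuple. Set up so that each of the $r+1$ auxiliary ordinary partitions contributes the single monomial $t=t_0t_1t_{r-1}t_r\prod_{j=2}^{r-2}t_j^2$ to $\xx^{\w}$ --- i.e.\ one ``period'' of the colouring adds two boxes of each spine colour and one of each fork-tip colour --- this bijection yields the factorisation \eqref{2dfactorisation_Dn} and the interpretation of $Z_\red^{BD_r}$ as the core generating series. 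Second, a bijection
\[
\PPP_{D_r}^{(r)}\;\longleftrightarrow\;\Lambda_{D_r},\qquad \lambda\leftrightarrow\m,\quad \xx^{\w(\lambda)}=t^{\langle\m,\m\rangle/2}\prod_i t_i^{m_i},
\]
which then produces the closed theta-function formula, and makes the $\N[[\xx]]$ positivity manifest (the theta sum has all coefficients in $\{0,1\}$).

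The cleanest way I know to construct these bijections and verify the (rather delicate) weight bookkeeping is representation-theoretic, exactly as in the type $A$ case (cf.\ \cite[Section 4 and Appendix A]{GyNSz}). One shows that $\PPP_{D_r}$, graded by $\w$, is a combinatorial model for a weight basis of the basic (level-one) representation $L(\Lambda_0)$ of the \emph{Heisenberg-enhanced} affine algebra of type $\widehat{D_r}$ --- equivalently that $\PPP_{D_r}$ realises the crystal $B(\Lambda_0)$ together with an auxiliary Heisenberg grading, the half-box rules (3)--(4) being precisely what is needed to encode the Kashiwara operators $\tilde e_i,\tilde f_i$ near the branch nodes. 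This is the content of \cite{KK, GyNSz} in type $D$. Under the substitution $t_i=e^{-\alpha_i}$ for the affine simple roots $\alpha_0,\dots,\alpha_r$, the two-dimensional McKay correspondence gives $\dim\rho_i=a_i$, the $i$-th mark of $\widehat{D_r}$, so that $t=\prod_i t_i^{\dim\rho_i}=e^{-\delta}$ for the null root $\delta$; hence $Z^{BD_r}(\xx)=e^{-\Lambda_0}\,\mathrm{ch}\,L(\Lambda_0)$ under this substitution.

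One then invokes the Frenkel--Kac--Segal vertex operator realisation $L(\Lambda_0)\cong\Sym(\widehat{\mathfrak h}^-)\otimes\C[\Lambda_{D_r}]$, where $\widehat{\mathfrak h}^-=\bigoplus_{m<0}\mathfrak h\otimes t^m$ with $\dim\mathfrak h=r+1$ (the rank-$r$ Cartan of $D_r$ plus the one extra generator of the enhancing Heisenberg factor, which is what distinguishes the $\widehat{\mathfrak{gl}}$-type picture from the $\widehat{\mathfrak{sl}}$-type one and accounts for the exponent $r+1$ rather than $r$), and $\C[\Lambda_{D_r}]$ is the group algebra of the finite $D_r$ root lattice. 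Reading off the $\delta$-graded character factorises the computation: the symmetric-algebra factor contributes $\prod_{m>0}(1-e^{-m\delta})^{-(r+1)}=E(t)^{r+1}$, while $\C[\Lambda_{D_r}]$ contributes $\sum_{\m\in\Lambda_{D_r}}e^{\overline{\m}}\,e^{-\frac{\langle\m,\m\rangle}{2}\delta}$, which is precisely $Z_\red^{BD_r}(\xx)$ after $e^{\overline{\m}}\mapsto\prod_i t_i^{m_i}$, $e^{-\delta}\mapsto t$. This yields \eqref{2dfactorisation_Dn} and the theta-function formula simultaneously, and $\PPP_{D_r}^{(r)}$ is then identified with the preimage of the $\C[\Lambda_{D_r}]$-part under the crystal isomorphism (the ``vacuum'' elements annihilated by all enhanced lowering operators --- the type $D$ analogue of $r$-cores).

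The main obstacle is the second step: proving that $\PPP_{D_r}$ with the fine grading by $\w$ is exactly the crystal $B(\Lambda_0)$ of $\widehat{D_r}$ enhanced by the Heisenberg grading. The genuinely new point, compared with the symmetric-group bijections of \cite{JK}, is matching the half-box rules (3)--(4) --- and the fact that two half-boxes behave, for colouring purposes, like one box of each spine type --- with the combinatorics of $\tilde e_i,\tilde f_i$ at the branch nodes; this is, I expect, the hard technical heart, carried out in \cite{KK, GyNSz}. Once it is in place, pinning down the enhancing Heisenberg factor (so the exponent is $r+1$) and checking the marks/dimensions identity (so that $t$ is literally $e^{-\delta}$) are routine. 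A more self-contained but in practice harder alternative is geometric: identify $Z^{BD_r}(\xx)$ with $\sum_\rho\chi\big(\Hilb^\rho([\C^2/G])\big)\,\xx^{\w(\rho)}$, then combine the cohomological McKay correspondence with a G\"ottsche/power-structure formula on the minimal resolution $Y\to\C^2/G$; but for non-abelian $G$ there is no torus action to localise to (the device used in Section~\ref{sec:2dHilb} for type $A$), so one must handle the equivariant comparison of the orbifold and resolution Hilbert schemes directly, which is itself delicate.
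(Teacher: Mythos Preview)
Your proposal is correct and takes essentially the same approach as the paper: the paper's proof is a one-line reference to~\cite{GyNSz}, declaring that the argument ``works along the same lines'' as the type~$A$ case, i.e.\ via a $D_r$-quotient bijection $\PPP_{D_r}\leftrightarrow\PPP^{r+1}\times\PPP_{D_r}^{(r)}$ followed by a bijection $\PPP_{D_r}^{(r)}\leftrightarrow\Lambda_{D_r}$, which is exactly the two-bijection plan you outline. Your representation-theoretic elaboration via the crystal model of~\cite{KK} and the Frenkel--Kac realisation of the basic representation is precisely the machinery that~\cite{GyNSz} uses to carry this out, and your identification of the exponent $r+1$ with ${\rm rk}\,D_r+1$ (the extra Heisenberg factor) and of $t$ with $e^{-\delta}$ via the affine marks is on target.
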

\begin{proof} The proof works along the same lines as the proof of Theorem~\ref{thm_abelian2d}; details are spelled out in~\cite{GyNSz}.  
\end{proof} 
We are not aware of a proof of the obvious generalization of formula~\eqref{eq:typeAsurfacerefined} for Poincar\'e polynomials in the type $D$
case. 

\begin{remark} \rm As already hinted above, the set of all $D_r$-partitions gives one combinatorial model
of the {crystal basis} of the basic representation of an infinite-dimensional Lie algebra closely related to the affine root system $\widehat D_{r}$; 
see~\cite{KK}.
\end{remark}

For subgroups of $\SL_2(\C)$ of types $E_6, E_7, E_8$, in other words the symmetry groups of the tetrahedron, cube and dodecahedron, 
we are not aware of such a construction of the crystal basis. Perhaps the corresponding groups are too ``three-dimensional''. 

\subsection{Groups outside $\SL$}

For any abelian group $G<\GL_2(\C)$, the colouring of partitions by periodic colours of course makes sense. However, we are not aware of 
any results computing the resulting partition function in full if $G$ contains elements of determinant different from $1$. See~\cite{BJ} 
for some nontrivial results. 

\section{Plane partitions: combinatorics}

\subsection{The setup and the main conjecture}

Now let $G$ be a finite abelian subgroup of $\SL_3(\C)$ of size $r$, that we will assume again to be diagonally embedded.
As before, let $\Irrep(G)=\{\rho_0={\rm triv},\ldots,\rho_{r-1}\}$ denote the irreducible representations of~$G$.  
The embedding  $G<\SL_3(\C)$ corresponds to a three-dimensional representation of $G$ that can be decomposed as  $\C^3\cong \rho_a\oplus\rho_b\oplus\rho_c$; if needed, we will denote this group $G(a,b,c)<\SL_3$. 

As before, we can colour (label) each box of a plane partition $\alpha\in\QQQ$ by one of $r$ labels as follows. 
Each box of coordinate $(i,j,k)$ in $\alpha$ corresponds to a monomial 
$x^iy^jz^k\in\C[\N^3]$, and thus corresponds to a one-dimensional $G$-eigenspace with character $\rho_{\chi(i,j,k)}\in\Irrep(G)$ for some $\chi(i,j,k)\in\{0,\ldots, r-1\}$; give this box colour $\chi(i,j,k)$. Let $w_i(\alpha)$ to be the number of boxes in $\alpha$ of colour $i$ and set $\w(\alpha)=(w_0(\alpha), \ldots, w_{r-1}(\alpha))$. See Figure~\ref{fig:3dcoloured} for some plane partitions coloured by $\mu_3(1,1,1)<\SL_3$.

\begin{figure}
\iflogvar
\centering
\begin{minipage}{.4\textwidth}
\begin{tikzpicture}[scale=0.5]
\colorplanepartition{{2,2,1},{1},{1},{1}}
\end{tikzpicture}
\end{minipage}
\begin{minipage}{.4\textwidth}
\begin{tikzpicture}[scale=0.5]
\colorplanepartition{{5,3,2,2,2,2,1},{5,3,2,2},{2,2,1,1},{2,2},{1},{1}}
\end{tikzpicture}
\end{minipage}
\fi
\caption{Plane partitions $\alpha_1, \alpha_2$ coloured by $\mu_3(1,1,1)<\SL_3$. The weights are $\w(\alpha_1)=(2,3,3)$, respectively $\w(\alpha_2)=(13,14,14)$}
\label{fig:3dcoloured}
\end{figure}
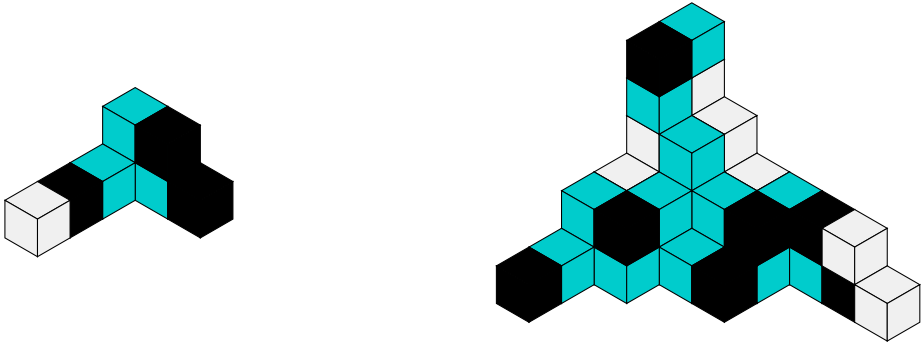

Let
\begin{equation}
\label{eq:3d:Zdef}
Z^{G(a,b,c)}(\xx)=\sum_{\alpha\in\QQQ}\xx^{\w(\alpha)}
\end{equation}
be the generating function of $G(a,b,c)$-coloured plane partitions. Once again, setting $t_i=t$ for all $i$ recovers the generating function $M(t)$ of all plane partitions. 

Motivated on the one hand by the formal analogy with coloured partitions, and on the other by some geometric ideas to be discussed in the last part of the paper, we make the following conjecture about this generating function. 

\begin{conjecture} \label{main_conj}Setting $t=\prod_{j=0}^{r-1}t_j$ to be the product of the variables, there exists a factorisation 
\begin{equation} Z^{G(a,b,c)}(\xx) = M(t)^r\cdot Z^{G(a,b,c)}_\red(\xx),
\label{formula_factor}
\end{equation}
where the reduced series $Z^{G(a,b,c)}_\red(\xx)$ has non-negative coefficients only.
\label{conj_comb}
\end{conjecture}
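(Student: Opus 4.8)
The plan is to imitate, as far as possible, the structure of the proof of Theorem~\ref{thm_abelian2d}, replacing the $r$-quotient/$r$-core bijection for ordinary partitions by an analogous combinatorial decomposition for plane partitions that is compatible with the colouring. The first step would be to reinterpret the colouring Lie-theoretically: since $G<\SL_3(\C)$ is abelian of order $r$, the character $\chi(i,j,k)$ depends only on $(ai+bj+ck) \bmod r$, and one should think of the colour function as a homomorphism $\N^3\to\Z/r$. The generating function $Z^{G(a,b,c)}(\xx)$ is then a refinement of $M(t)$ recording, for each box, the residue class of its ``diagonal coordinate''. I would try to extract a free ``bulk'' part of the plane partition (to account for the $M(t)^r$ factor) and a ``reduced'' or ``core'' part on which the residual colouring data is supported, showing the former contributes $r$ independent copies of the McMahon series in the single variable $t=\prod_j t_j$, and the latter contributes the reduced series $Z^{G(a,b,c)}_\red(\xx)$.

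Concretely, the key steps would be: (1) establish that, after specialising $t_i\mapsto t$, the conjectured formula is consistent with \eqref{eq_mcmahon}, i.e. $Z^{G(a,b,c)}_\red(\xx)$ specialises to $1$ — this is the ``one nontrivial consequence'' the authors say they check, and it is a sanity check rather than a proof; (2) produce a bijection, or at least an injection-with-controlled-complement, between $\QQQ$ and a product $\QQQ^{\,?}\times(\text{core plane partitions})$ along the lines of the colour stratification — geometrically this corresponds to the decomposition of the $G$-Hilbert scheme of $\C^3$ or of $[\C^3/G]$ into pieces, and to a torus-equivariant localisation where fixed points are coloured plane partitions; (3) identify the ``bulk'' factor: the subtlety is that there is no clean analogue of the $r$-quotient for plane partitions, so one would instead argue that along each of the infinitely many ``diagonal slices'' of $\N^3$ transverse to the colour-periodicity direction, the plane partition looks like an ordinary partition, and the McMahon-type product \eqref{eq_mcmahon} with exponents $-m$ arises from summing over these slices, with the colour of an entire slice being determined modulo $r$; (4) deduce that what is left, $Z^{G(a,b,c)}_\red(\xx)$, is a generating function over a restricted set of ``core'' configurations and hence lies in $\N[[\xx]]$ by construction, since it counts an honest set of combinatorial objects.

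The main obstacle, and the reason this is only a conjecture, is step (2)–(3): there is no known analogue of the $r$-quotient bijection for plane partitions that respects a periodic 3-colouring, so the clean factorisation $M(t)^r\cdot Z_\red$ cannot be produced by an elementary combinatorial surgery the way the type-$A$ surface case can. One would have to either (a) find such a bijection — genuinely new combinatorics — or (b) prove the factorisation indirectly, e.g. via the vertex-operator / crystal-basis formalism (a three-dimensional analogue of the Frenkel--Kac computation, which does not presently exist for general $G<\SL_3$), or (c) go through geometry: realise $Z^{G(a,b,c)}(\xx)$ as a Donaldson--Thomas-type partition function of $[\C^3/G]$ and invoke an orbifold version of the Bridgeland--Toda theorem (the orbifold DT/PT correspondence and the degree-zero factorisation for the orbifold), which is itself only partially available. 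Positivity of $Z^{G(a,b,c)}_\red$ would then follow either tautologically (if a bijective proof is found) or, in the quantised setting, from the Hodge-theoretic/wall-crossing positivity of Theorem~\ref{thm:positivity}. Given the present state of the art, I expect that a full proof is not within reach by elementary means, and that the honest content of this section is the formulation of the conjecture plus verification of its specialisation and low-$r$ instances.
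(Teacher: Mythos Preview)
The statement you are attempting to prove is a \emph{conjecture}; the paper does not prove it, and neither does your proposal. You correctly arrive at this conclusion in your last paragraph, but along the way there are two concrete errors worth flagging.

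First, your ``sanity check'' in step (1) is wrong. Under the specialisation $t_i\mapsto t$, the product variable becomes $\prod_j t_j\mapsto t^r$, so the conjectured factorisation reads $M(t)=M(t^r)^r\cdot Z_\red^{G(a,b,c)}(t,\ldots,t)$, and $Z_\red$ does \emph{not} specialise to $1$. The ``one nontrivial consequence'' the paper actually checks is Proposition~\ref{prop:equalsr}: that the coefficient of $t_0t_1\cdots t_{r-1}$ in $Z^{G(a,b,c)}$ is exactly $r$, matching the first nontrivial coefficient of $M(t)^r$. This is proved geometrically via the McKay correspondence, not combinatorially.

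Second, and more seriously, your main strategy in steps (2)--(4), namely to exhibit $Z_\red^{G(a,b,c)}$ as the generating function of a natural subset of plane partitions (``core'' configurations), is explicitly ruled out in the paper. Proposition~\ref{nogo_prop} shows that for $G=\mu_3(1,1,1)$ the coefficient of $t_0^3t_1^3t_2^3$ in $Z_\red$ is $44$, while the $108$ plane partitions of that multi-weight fall into $S_3$-orbits of sizes $3$ and $6$ only; since $3\nmid 44$, no $S_3$-invariant selection rule can pick out $44$ of them. So the analogue of the $r$-core bijection cannot exist in any symmetric form, and positivity, if true, cannot be established ``tautologically'' as you hope in step (4). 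The paper's own line of attack is via geometry and Hodge theory (Section~4), culminating in a purity statement (Theorem~\ref{thm:positivity}) for the \emph{unreduced} quantised partition function under a restrictive hypothesis on $G$; even this does not settle Conjecture~\ref{main_conj}.
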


The series $M(t)^r$ begins $1+rt_0t_1\ldots t_{r-1}+O(t^{2})$. Thus our putative factorisation with positive quotient can only work if the coefficient
of $t_0t_1\ldots t_{r-1}$ in $Z^{G(a,b,c)}(\xx)$ is at least~$r$. In fact, the following stronger statement is implicit in the literature. 

\begin{proposition} The coefficient of the $t_0t_1\ldots t_{r-1}$ term in $Z^{G(a,b,c)}(\xx)$ is exactly $r$. In other words, 
independently of the choice of the group $G<\SL_3$, there are exactly $r$ plane partitions with one box of colour $i$ for each $i=0,\ldots, r-1$. 
\label{prop:equalsr}
\end{proposition}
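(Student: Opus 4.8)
The plan is to interpret the coefficient in question as a count of torus-fixed points on a resolution of $\C^3/G$. First I would note that a plane partition $\alpha\in\QQQ$, viewed as the standard monomial basis of $\C[x,y,z]/I_\alpha$ for the associated $T$-invariant monomial ideal $I_\alpha$ (where $T=(\C^*)^3$), contributes to the coefficient of $t_0t_1\cdots t_{r-1}$ exactly when $\w(\alpha)=(1,\ldots,1)$, i.e.\ when each irreducible character of $G$ occurs exactly once among the colours of the boxes of $\alpha$. Since $G$ is abelian, all its irreducibles are one-dimensional, so this says precisely that $\C[x,y,z]/I_\alpha$ is isomorphic to the regular representation $\C[G]$; equivalently, $I_\alpha$ is a monomial $G$-cluster. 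Because $G<T$ acts diagonally, $T$ preserves the locus of $G$-clusters, so it acts on $\GHilb(\C^3)$, and the $T$-fixed points of $\GHilb(\C^3)$ are exactly the monomial $G$-clusters. Hence the coefficient we want equals the number of $T$-fixed points of $\GHilb(\C^3)$.

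Next I would invoke the structure theory of $\GHilb(\C^3)$ for finite abelian $G<\SL_3(\C)$: by the work of Nakamura (with the general statement due to Bridgeland--King--Reid), $\GHilb(\C^3)$ is a smooth toric crepant resolution of $\C^3/G$. In particular it is a smooth quasi-projective toric variety whose $T$-fixed points are isolated and are precisely those identified above; and for such a variety the topological Euler characteristic equals the number of $T$-fixed points (for instance via the Bia{\l}ynicki--Birula decomposition associated to a generic one-parameter subgroup of $T$). So the sought-after coefficient is $\chi(\GHilb(\C^3))$.

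Finally I would compute $\chi(\GHilb(\C^3))=r$. This is the Euler-characteristic form of the McKay correspondence: any crepant resolution $Y\to\C^3/G$ has $\chi(Y)$ equal to the number of conjugacy classes of $G$ (Batyrev; Denef--Loeser), which for $G$ abelian of order $r$ is simply $r$. In the toric abelian setting this reduces to the elementary fact that a unimodular triangulation of the junior simplex of $G$ has exactly $r$ maximal simplices, so no heavy machinery is really needed here. Combining the three steps yields Proposition~\ref{prop:equalsr}.

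The genuinely delicate point is the bookkeeping in the first step: one must check that $\alpha\mapsto I_\alpha$ sets up a bijection between the plane partitions with $\w(\alpha)=(1,\ldots,1)$ and the monomial $G$-clusters, with no $T$-fixed point of $\GHilb(\C^3)$ missed and none counted twice. Everything else is a matter of assembling the appropriate results about $\C^3/G$ and its crepant resolution discussed in Section~4. If one prefers to avoid $\GHilb(\C^3)$, an alternative is to describe the relevant plane partitions directly as the maximal simplices of a crepant (basic) triangulation of the junior simplex and to count those combinatorially; this is elementary but more tedious, and I would favour the geometric argument above.
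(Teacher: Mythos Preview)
Your proposal is correct and follows essentially the same route as the paper: identify the coefficient with the number of $T$-fixed points of Nakamura's $G$-Hilbert scheme $Y_G=\Hilb^{\rho_{\rm reg}}([\C^3/G])$, hence with $\chi(Y_G)$, and then invoke the McKay correspondence (Batyrev) to get $\chi(Y_G)=r$. The paper's proof is terser---it packages your first three steps into the equality $Z^{G(a,b,c)}(\xx)=\sum_{\dd}\chi(\Hilb^{\rho_\dd}([\C^3/G]))\xx^{\dd}$ established just before---but the underlying argument is identical.
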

\begin{proof} This is stated in~\cite[Lemma 4.1]{Nak}, but, it seems to us, without proof. In the next section, we discuss a geometric argument for this statement. We are not aware of a direct combinatorial proof. 
\end{proof}

It is natural to speculate whether the reduced series $Z^{G(a,b,c)}_\red(\xx)$ might be the generating function 
of some natural subset of the set  $\QQQ$ of plane partitions, analogous to the second part of Theorems~\ref{thm_abelian2d}, \ref{thm_Dnseries}. 
Unfortunately, as we discuss in one of the examples below in Section~\ref{sec_Z3}, this cannot be true, at least without breaking the natural symmetries of 
the problem. 

In the following sections we discuss some instances in which Conjecture~\ref{conj_comb} is known to hold, and present some numerical evidence for it in other cases. Section 4 of the paper puts the conjecture in a geometric context, discusses a quantization of it, and outlines a possible line of attack.  

\subsection{The action $\mu_r(1,r-1,0)$ and $\sl_{r}$ symmetry}
\label{3dAn}

Consider the case when $G<\SL_2(\C)<\SL_3(\C)$ is abelian; thus $G\cong \mu_r$ acts with weights $(1,r-1,0)$. 
The generating function $Z^{\mu_r(1,r-1,0)}(\xx)$ admits an explicit infinite product representation. 

\begin{theorem}[Young~\cite{BY}] We have 
\begin{equation}Z^{\mu_r(1,r-1,0)}(t_0\ldots, t_{r-1})=M(t)^r \prod_{0<a<b<r}\tilde M\left(\prod_{a\leq j\leq b} t_j, t\right).\label{eq:3dAn}\end{equation}
\end{theorem}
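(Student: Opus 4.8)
The plan is to prove the identity \eqref{eq:3dAn} via the \emph{topological vertex} / dimer-model description of $\mu_r(1,r-1,0)$-coloured plane partitions. The key observation is that the weight $(1,r-1,0)$ action has a trivial $z$-direction, so a coloured plane partition is equivalent to a sequence of coloured ordinary partitions $(\lambda^{(0)}\supseteq\lambda^{(1)}\supseteq\cdots)$ read off along the $z$-axis, each of which lives in the $\mu_r(1,r-1)$-coloured plane of Section~\ref{diag_colouring}. First I would set up this ``slicing'' bijection carefully, recording how the colour of a box at height $k$ differs from its colour at height $0$ --- since $z$ acts with weight $0$, the colour of $(i,j,k)$ depends only on $i-j\bmod r$, so all slices use the \emph{same} colouring of the plane. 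This reduces the computation of $Z^{\mu_r(1,r-1,0)}(\xx)$ to a transfer-matrix computation over chains of coloured Young diagrams.

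The next step is to evaluate this transfer-matrix sum using the theory of vertex operators on the fermionic Fock space, as in Okounkov--Reshetikhin--Vafa, but keeping track of the $\mu_r$-colour grading. Concretely, one writes $Z^{\mu_r(1,r-1,0)}(\xx)$ as a vacuum expectation $\langle \emptyset | \prod \Gamma_+(\cdot)\Gamma_-(\cdot) | \emptyset\rangle$ of half-vertex operators whose arguments are monomials in the $t_j$, and then commutes all $\Gamma_+$ past all $\Gamma_-$. Each commutator $[\Gamma_+(x),\Gamma_-(y)]$-type relation contributes a factor $(1-xy)^{-1}$; collecting these over all pairs of slices produces exactly the product structure on the right-hand side. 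The ``diagonal'' pairings (a slice with itself, or adjacent slices in the same colour sector) assemble into the $M(t)^r$ prefactor --- this is where Proposition~\ref{prop:equalsr} is consistent with the leading term --- while the ``off-diagonal'' pairings between colour sectors $a$ and $b$ produce the factor $\tilde M\bigl(\prod_{a\leq j\leq b}t_j,\,t\bigr)$, the combination $s+s^{-1}$ arising because each unordered pair $\{a,b\}$ contributes both a $\Gamma_+\!\cdot\!\Gamma_-$ and a $\Gamma_-\!\cdot\!\Gamma_+$ crossing. An alternative to the operator formalism, which I would mention, is to directly invoke the known $q$-refined formula for coloured Hilbert schemes of $[\C^2/\mu_r]$ (equation~\eqref{eq:typeAsurfacerefined}) together with the fact that a plane partition with trivial $z$-weight is a path in the crystal, and then sum the geometric series in the $z$-variable $t$; this makes the $M(t)^r$ factor manifest from \eqref{2dfactorisation} and the vertex contributions from the edge factors.

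The main obstacle I anticipate is the careful bookkeeping of which monomial $\prod_{a\leq j\leq b} t_j$ attaches to which pair of colour sectors, including getting the index range and the symmetry $a\leftrightarrow b$ exactly right, and verifying that the leftover diagonal terms reorganise precisely into $M(t)^r=\PExp^\sigma\bigl(rt/(1-t)^2\bigr)$ rather than some twisted variant. A secondary subtlety is convergence/formal-power-series rigour: the vertex operator manipulations are a priori formal, and one must check that the rearrangement of the infinite product of commutators is legitimate in $\N[[t_0,\dots,t_{r-1}]]$, which follows since the total degree in $t=\prod t_j$ of any contributing term is bounded below by the number of boxes. Since this theorem is attributed to Young~\cite{BY}, I would present the above as a sketch and refer to \emph{loc.\ cit.} for the full details of the vertex-operator computation.
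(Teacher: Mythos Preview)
Your approach is essentially the one the paper has in mind: both point to the transfer-matrix/vertex-operator proof of MacMahon's formula as in Okounkov--Reshetikhin and refer to Young~\cite{BY} for the details. There is, however, a small but important slip in your setup. You slice along the $z$-axis, producing a chain $(\lambda^{(0)}\supseteq\lambda^{(1)}\supseteq\cdots)$ of \emph{$\mu_r$-coloured} 2D partitions; but the $\Gamma_\pm$ formalism you then invoke is the \emph{diagonal} slicing, where one reads off interlacing partitions along the diagonals $i-j=m$. The point the paper singles out is precisely that these diagonal slices are \emph{monochromatic}: since the colour of $(i,j,k)$ is $i-j\bmod r$, every box in the $m$th diagonal slice carries the single colour $m\bmod r$. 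That is what makes the standard uncoloured computation go through verbatim, with the argument of each $\Gamma_\pm$ picking up a single extra $t_j$ determined by the residue of its slice index; commuting the operators then yields the product \eqref{eq:3dAn} directly. Your horizontal-slice description is not wrong as a bijection, but it is not the decomposition that feeds into the $\Gamma_\pm$ calculation, and it obscures the monochromaticity that makes the bookkeeping you worry about in your last paragraph essentially trivial.
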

\begin{proof} As discussed in~\cite{BY}, some of the standard proofs of~\eqref{eq_mcmahon} also prove this result. The key is to ``slice'' a plane partition into partition slices that are monochromatic in this colouring. 
\end{proof}
In particular, Conjecture~\ref{conj_comb} holds in this case. To bring out symmetries of this formula more explicitly, we can re-write it using the plethystic exponential as follows. 
\begin{corollary} Let $\Sigma\subset \Lambda_{A_{r-1}} \cong \Z^{r-1}\subset \Z^{r}$ denote the root system of the Lie algebra of type $A_{r-1}$.  Then
\[Z^{\mu_r(1,r-1,0)}(\xx)= \PExp^\sigma\left( \frac{t}{(1-t)^2}\left(r+\sum_{\alpha\in\Sigma}\xx^{\alpha} \right)\right), \]
where $t=\prod_{j=0}^{r-1}t_j$ still, and in the last  bracket, only the variables $t_1, \ldots, t_{r-1}$ appear.
\end{corollary}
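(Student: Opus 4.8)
The plan is to deduce this corollary directly from the preceding theorem, formula~\eqref{eq:3dAn}, by manipulating the plethystic exponential. First I would recall that the two building blocks already carry plethystic descriptions: from the Notation section, $M(t) = \PExp^\sigma\!\left(\frac{t}{(1-t)^2}\right)$ and $\tilde M(s,t) = \PExp^\sigma\!\left(\frac{(s+s^{-1})\,t}{(1-t)^2}\right)$. Since $\PExp^\sigma$ turns products of its arguments' sums into products of series (this much of the multiplicativity survives the twisting, as long as no cancellation of signs is forced, and here all signs are $+1$), I would write
\[
Z^{\mu_r(1,r-1,0)}(\xx) = M(t)^r \cdot \prod_{0<a<b<r}\tilde M\!\left(\textstyle\prod_{a\le j\le b}t_j,\ t\right)
= \PExp^\sigma\!\left( \frac{t}{(1-t)^2}\Bigl( r + \sum_{0<a<b<r}\bigl(s_{ab} + s_{ab}^{-1}\bigr)\Bigr)\right),
\]
where $s_{ab} = \prod_{a\le j\le b} t_j$. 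So the entire content reduces to the purely combinatorial identity
\[
\sum_{0<a<b<r}\bigl(s_{ab} + s_{ab}^{-1}\bigr) = \sum_{\alpha\in\Sigma}\xx^{\alpha},
\]
with $\Sigma$ the $A_{r-1}$ root system.

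The second and main step is therefore to identify the monomials $s_{ab}^{\pm 1}$ with the roots of $A_{r-1}$. I would set up the standard presentation: $\Lambda_{A_{r-1}}$ has simple roots $\alpha_1,\dots,\alpha_{r-1}$, and under the labelling of this paper these correspond to the variables $t_1,\dots,t_{r-1}$ in the sense that $\xx^{\alpha_i} = t_i$. A positive root of $A_{r-1}$ is exactly a sum of consecutive simple roots $\alpha_i + \alpha_{i+1} + \cdots + \alpha_j$ for $1 \le i \le j \le r-1$; hence $\xx^{\alpha_i+\cdots+\alpha_j} = t_i t_{i+1}\cdots t_j = s_{ij}$. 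Reindexing (writing $a=i$, $b=j$, so $0 < a \le b < r$ — note the diagonal $a=b$ gives the simple roots themselves), the positive roots are precisely the $s_{ab}$ for $0 < a \le b < r$, and the negative roots are their inverses. Comparing with the sum $\sum_{0<a<b<r}(s_{ab}+s_{ab}^{-1})$ from Young's formula, I see the range there is $a < b$, which misses the $a=b$ terms $t_a + t_a^{-1}$; but these are exactly the monomials of the $r-1$ simple roots and their negatives. So I must check that Young's formula, written with the $M(t)^r$ prefactor already pulled out, combined with the convention for how the $t_i$ relate to $t = \prod_j t_j$, genuinely accounts for all $r(r-1)$ nonzero roots — equivalently that the factor $M(t)^r$ plus the $\prod_{a<b}\tilde M$ factor, after taking $\PExp^\sigma$, produce the bracket $r + \sum_{\alpha\in\Sigma}\xx^\alpha$ including the simple-root contributions.

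I expect the main obstacle to be precisely this bookkeeping reconciliation between the index ranges: Young's formula as stated sums over $0 < a < b < r$ (strict), which is $\binom{r-1}{2}$ terms, whereas $A_{r-1}$ has $r-1$ more positive roots (the simple ones). The resolution should be that $t = \prod_{j=0}^{r-1} t_j$ includes $t_0$, so writing a root monomial $\xx^\alpha$ in the coordinates $t_1,\dots,t_{r-1}$ versus in the "homogeneous" coordinates requires care; plausibly the $r-1$ simple roots correspond to terms that have been absorbed, or the convention is that $t_0$ is eliminated via $t_0 = t/(t_1\cdots t_{r-1})$ so that, e.g., $t_1\cdots t_{r-1} = t/t_0$ and the product $s_{ab}$ over the full range $1\le a \le b \le r-1$ together with the "wrap-around" products involving $t_0$ exhaust the roots. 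I would carefully rewrite $\tilde M(s_{ab},t)$ using the identity $s_{ab}\cdot s_{a'b'} \cdots$ relations among consecutive products and the constraint $\prod_{j=0}^{r-1} t_j = t$, and verify term by term for small $r$ (say $r=2,3$) that the bracket indeed equals $r + \sum_{\alpha\in\Sigma}\xx^\alpha$; once the indexing convention is pinned down the general case follows formally. The rest — multiplicativity of $\PExp^\sigma$ over these all-positive-sign arguments, and the elementary fact that consecutive sums of simple roots are exactly the positive roots of type $A$ — is routine.
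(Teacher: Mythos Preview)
Your approach is correct and is exactly the intended one: the paper gives no proof of the corollary, treating it as a direct rewriting of~\eqref{eq:3dAn} via the plethystic identities for $M$ and $\tilde M$, together with the standard fact that the positive roots of type $A_{r-1}$ are precisely the consecutive sums $\alpha_a+\cdots+\alpha_b$ of simple roots.

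The obstacle you flagged is real, but it is not a subtle convention issue to be resolved by eliminating $t_0$ or by wrap-around products: it is a typo in the paper's statement of Young's theorem. The product in~\eqref{eq:3dAn} should range over $0<a\le b<r$ (equivalently $1\le a\le b\le r-1$), not $0<a<b<r$. You can see this already at $r=2$: with strict inequality the product is empty and the formula reduces to $Z=M(t_0t_1)^2$, which has no pure $t_0$ term, whereas the plane partition consisting of the single box at the origin contributes $t_0$. With the corrected range there are $\binom{r}{2}$ factors $\tilde M(s_{ab},t)$, and $s_{ab}=t_a\cdots t_b$ for $1\le a\le b\le r-1$ runs exactly once over the monomials of the positive roots, the diagonal $a=b$ supplying the simple roots you found missing. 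With this correction in place your argument goes through verbatim and the bookkeeping speculations in your last paragraph are unnecessary.
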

This formula nicely confirms expectations of the theoretical physics literature, where this geometry is expected to lead to ``enhanced $A_{r-1}$ symmetry''~\cite{EGS}. The $\sl_{r}$-symmetry of the last formula is apparent: it is essentially the (exponential of the) character of the adjoint representation. 

\subsection{The case $G\cong \mu_2\times\mu_2$ and $\sl_2^3$-symmetry}
\label{Z2Z2}

The main result of~\cite{BY} treats the case $G=\mu_2\times\mu_2<\SL_3(\C)$, embedded as the group of all diagonal matrices with 
determinant $1$ that square to the identity. The group $G$ has four one-dimensional irreducible representations
$\Irrep(G)=\{\rho_0={\rm triv},\rho_1, \rho_2, \rho_3\}$. The $G$-coloured partition function again admits an explicit infinite product representation.

\begin{theorem} [Young~\cite{BY}] With $t=\prod_{i=0}^3 t_i$ as before, 
\[Z^{\mu_2\times\mu_2}(t_0, t_1, t_2, t_3)= M(t)^4 \frac{\tilde M(t_1t_2, t)\tilde M(t_1t_2, t)\tilde M(t_1t_2, t)}{\tilde M(-t_1, t)\tilde M(-t_2, t)\tilde M(-t_3, t)\tilde M(-t_1t_2t_3, t)}.\]
\end{theorem}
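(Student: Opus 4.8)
The plan is to run the transfer-matrix (vertex-operator) proof of the MacMahon product~\eqref{eq_mcmahon}, in the same spirit as the proof of the previous theorem, but now allowing the diagonal slices of the plane partition to carry \emph{two} colours rather than one. First I would recall the slicing: a plane partition $\alpha\in\QQQ$ is the same data as its family of diagonal slices $\lambda^{(m)}$, $m\in\Z$, where $\lambda^{(m)}$ is the intersection of $\alpha$ with the plane $\{i-j=m\}$; these form a chain of partitions $\cdots\prec\lambda^{(-1)}\prec\lambda^{(0)}\succ\lambda^{(1)}\succ\cdots$ in which consecutive terms interlace and all but finitely many are empty, with $\w(\alpha)=\sum_m\w(\lambda^{(m)})$. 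The uncoloured formula is recovered by writing $M(t)=\langle\emptyset|\prod_{m<0}\Gamma_+(\cdot)\,\prod_{m\ge 0}\Gamma_-(\cdot)|\emptyset\rangle$ in the fermionic Fock space, with a dilation operator between consecutive factors tracking $|\lambda^{(m)}|$, and then commuting every $\Gamma_+$ past every $\Gamma_-$ via $\Gamma_+(a)\Gamma_-(b)=(1-ab)^{-1}\Gamma_-(b)\Gamma_+(a)$.

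Next I would read off the colouring in slice coordinates. For $G=\mu_2\times\mu_2$ diagonally embedded with $\C^3\cong\rho_1\oplus\rho_2\oplus\rho_3$, the colour of a box $(i,j,k)$ depends only on the pair of parities $(i+j,\,i+k)\bmod 2$. On the slice $\{i-j=m\}$ the first parity is pinned to $m\bmod 2$, while the second runs over both residues as one moves along the slice; hence $\lambda^{(m)}$ is \emph{checkerboard two-coloured}, and which pair of colours occurs is governed only by $m\bmod 2$ --- after relabelling, even slices use $\{\rho_0,\rho_3\}$ and odd slices use $\{\rho_1,\rho_2\}$. Thus $\xx^{\w(\alpha)}$ still factors as a product over slices, the factor for $\lambda^{(m)}$ being the two-variable checkerboard content of $\lambda^{(m)}$, weighted by the colour pair dictated by the parity of $m$. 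In the operator picture this means the grading operator inserted between consecutive $\Gamma$'s must now record this refined content; the checkerboard content of a partition is an explicit classical quantity, so this is bookkeeping rather than a new idea.

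Then I would re-run the commutation. With the refined weights, commuting the $\Gamma_+$'s past the $\Gamma_-$'s produces, for each pair of slices $(m,m')$ with $m<0\le m'$, a factor of the shape $\bigl(1-u_{m,m'}\,t^{\,\ell(m,m')}\bigr)^{\pm 1}$, where $t=t_0t_1t_2t_3$ is the content of one full $G$-orbit of colours, $\ell$ grows linearly in $m'-m$, and the monomial $u_{m,m'}$ records the net colour shift between the two slices. Summing over all such pairs, the colour-preserving contributions assemble into $M(t)^4$ --- one independent MacMahon factor per colour --- and one checks that the colour-changing contributions are exactly those with shift $t_1t_2,\,t_1t_3,\,t_2t_3$, producing the $\tilde M(t_it_j,t)$ numerator factors, together with $t_1,\,t_2,\,t_3,\,t_1t_2t_3$, producing the $\tilde M(-t_i,t)^{-1}$ and $\tilde M(-t_1t_2t_3,t)^{-1}$ denominator factors; resumming in $m'-m$ turns each family into the corresponding $\tilde M^{\pm1}$. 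In the $\PExp^\sigma$ convention the factors with a minus sign in the argument are $(1+\cdots)^{\pm1}$, manifestly positive, consistent with $Z^{\mu_2\times\mu_2}\in\N[[\xx]]$.

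The main obstacle is precisely this last sign bookkeeping: for each admissible colour shift one must decide whether the factor is $\tilde M(+\cdot,t)$ (numerator) or $\tilde M(-\cdot,t)^{-1}$ (denominator), and this parity is created by the interaction of the $G$-grading with the Maya-diagram sign conventions of the half-vertex operators --- the twisted-sector signs coming from the $\mu_2\times\mu_2$ action on the fermion modes. Carrying it cleanly through the commutation relations, while also verifying that the slices at \emph{all} separations organise into exactly the stated $\tilde M$-pattern with no leftover factors, is the delicate part. A conceptually different route that sidesteps the signs is to reformulate $\mu_2\times\mu_2$-coloured plane partitions as perfect matchings of the brane tiling of $\C^3/(\mu_2\times\mu_2)$ and to derive a recursion for $Z^{\mu_2\times\mu_2}$ by Kuo-type graphical condensation, in the spirit of the dimer-shuffling technique of~\cite{BY}; positivity is then automatic, but one still has to solve the recursion to recover the closed product form above.
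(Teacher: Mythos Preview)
Your main line has a real gap at the step you call ``bookkeeping rather than a new idea''. In the $\mu_r(1,r-1,0)$ case each diagonal slice is \emph{monochromatic}, so the weight of a slice is $t_c^{|\lambda^{(m)}|}$ and the grading operator is the ordinary energy operator, which satisfies $D_{t_c}\Gamma_\pm(a)=\Gamma_\pm(t_c^{\pm 1}a)D_{t_c}$; this is exactly what makes the commutation argument run. For $\mu_2\times\mu_2$ your own analysis shows each slice is checkerboard two-coloured, so the operator you must insert is $D_{p,q}|\lambda\rangle=p^{w_0(\lambda)}q^{w_1(\lambda)}|\lambda\rangle$. This operator does \emph{not} commute through $\Gamma_\pm$ by a simple rescaling: when $\Gamma_+$ adds a horizontal strip $\mu/\lambda$, the numbers $w_0(\mu/\lambda)$ and $w_1(\mu/\lambda)$ depend on the actual positions of the added cells, not just on $|\mu/\lambda|$, so there is no identity of the shape $D_{p,q}\Gamma_\pm(a)=\Gamma_\pm(a')D_{p,q}$. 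Hence the assertion that commuting $\Gamma_+$'s past $\Gamma_-$'s yields one scalar factor $(1-u_{m,m'}t^{\ell(m,m')})^{\pm1}$ per pair of slices is unsupported. This is structural, not a sign issue: the obstruction appears before any parity questions arise, and it is exactly why the paper flags this theorem as ``much harder'' than its predecessor. Your allusion to twisted-sector fermion signs gestures towards a coloured-fermion upgrade of the Fock space, but that is an additional construction to be set up and proved, not bookkeeping within the standard $\Gamma_\pm$ calculus.

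For comparison, the paper does not give its own argument; it records that Young's proof in~\cite{BY} is indirect, using combinatorial wall-crossing to convert the $\mu_2\times\mu_2$ orbifold vertex into the pyramid-partition generating function, which had been evaluated earlier by dimer shuffling. Your closing alternative---recast as perfect matchings on the brane tiling for $\C^3/(\mu_2\times\mu_2)$ and run Kuo condensation---is thus much closer in spirit to what actually succeeds, though the decisive step in~\cite{BY}, the wall-crossing that relates the two dimer models, is a different idea from either of your two routes and is where the real work lies.
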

\begin{proof} This is much harder; the proof in~\cite{BY}  is indirect, and uses combinatorial wall-crossing to arrive at a different problem, solved
earlier, involving pyramid partitions. 
\end{proof}
This result confirms Conjecture~\ref{conj_comb} in this case. 
The right hand side can again be written in a more symmetric form using the plethystic exponential. Indeed, a short calculation shows

\begin{corollary} We have 
\[Z^{\mu_2\times\mu_2}(\xx)= \PExp^\sigma\left( \frac{t}{(1-t)^2}\left(2+(t_1^{\half} -t_1^{-\half})(t_2^{\half} -t_2^{-\half})(t_3^{\half} -t_3^{-\half}) ((t_1t_2t_3)^{-\half} -(t_1t_2t_3)^{\half}) \right)\right) \]
\end{corollary}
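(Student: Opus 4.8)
The plan is to substitute into the product formula of the previous theorem the three elementary identities recorded in the Notation section: $M(t)=\PExp^\sigma\big(\frac{t}{(1-t)^{2}}\big)$, $\tilde M(s,t)=\PExp^\sigma\big(\frac{(s+s^{-1})t}{(1-t)^{2}}\big)$, and $\tilde M(-s,t)^{-1}=\PExp^\sigma\big(\frac{-(s+s^{-1})t}{(1-t)^{2}}\big)$. Using the first with $s=1$ for $M(t)^{4}$, the second with $s=t_1t_2,\ t_1t_3,\ t_2t_3$ for the numerator, and the third with $s=t_1,\ t_2,\ t_3,\ t_1t_2t_3$ for the denominator, Young's formula is rewritten as a product of eight $\PExp^\sigma$-expressions, each with argument of the shape $\frac{t}{(1-t)^{2}}$ times a signed Laurent monomial (or the constant $4$). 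The goal is then to collapse this product into a single $\PExp^\sigma$.

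Since $\PExp^\sigma$ is not a ring homomorphism, this collapse needs justification, and that is the step I expect to require the most care. Write $t=t_0t_1t_2t_3$. Every Laurent monomial $\xx^{\n}$ occurring (after expansion) in any of the eight arguments has the form $s\cdot t^{m}$ with $m\ge 1$ and $s$ lying in the finite list $\{1\}\cup\{t_it_j,\ (t_it_j)^{-1}\}_{1\le i<j\le 3}\cup\{t_i,\ t_i^{-1}\}_{1\le i\le 3}\cup\{t_1t_2t_3,\ (t_1t_2t_3)^{-1}\}$. As $t$ involves $t_0$ with exponent $1$ while no $s$ does, both $m$ and $s$ are recovered from $\n$; and the values of $s$ arising with sign $+1$ (from $M(t)^{4}$ and the $\tilde M(t_it_j,t)$) form a set disjoint from those arising with sign $-1$ (from the $\tilde M(-t_i,t)^{-1}$ and $\tilde M(-t_1t_2t_3,t)^{-1}$). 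Hence every $\xx^{\n}$ is assigned a well-defined sign, each coefficient is (up to that sign) a non-negative integer, and the eight factors multiply to $\PExp^\sigma$ of the sum of their arguments, namely
\[\PExp^\sigma\!\left(\frac{t}{(1-t)^{2}}\Big(4+\sum_{1\le i<j\le 3}\big(t_it_j+t_i^{-1}t_j^{-1}\big)-\sum_{i=1}^{3}\big(t_i+t_i^{-1}\big)-\big(t_1t_2t_3+(t_1t_2t_3)^{-1}\big)\Big)\right).\]
Equivalently, one may expand both this expression and Young's formula directly into infinite products $\prod_{m>0}(1\pm\xx^{\n})^{\pm m}$ and match factors; the disjointness above is exactly what makes the match work.

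It then remains to identify the bracketed Laurent polynomial with the one in the statement, i.e.\ to check
\[4+\sum_{1\le i<j\le 3}\big(t_it_j+t_i^{-1}t_j^{-1}\big)-\sum_{i=1}^{3}\big(t_i+t_i^{-1}\big)-\big(t_1t_2t_3+(t_1t_2t_3)^{-1}\big)=2+\prod_{i=1}^{3}\big(t_i^{\half}-t_i^{-\half}\big)\cdot\big((t_1t_2t_3)^{-\half}-(t_1t_2t_3)^{\half}\big).\]
This is the short calculation referred to in the text: expanding the right-hand product over its $2^{4}$ sign choices, the half-integer exponents contributed by the factor $(t_1t_2t_3)^{\mp\half}$ exactly compensate those from the first three factors, so each of the sixteen resulting terms is an integral Laurent monomial, and collecting them gives $2+\sum_{i<j}(t_it_j+t_i^{-1}t_j^{-1})-\sum_i(t_i+t_i^{-1})-(t_1t_2t_3+(t_1t_2t_3)^{-1})$. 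Feeding this identity into the display above yields the corollary. The one genuinely delicate point throughout is the sign bookkeeping in the middle paragraph; once the $+$ and $-$ supports are separated, everything else is routine.
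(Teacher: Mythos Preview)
Your proof is correct and follows exactly the route the paper gestures at with ``a short calculation shows'': rewrite each factor of Young's product formula via the $\PExp^\sigma$ identities from the Notation section, merge, and then verify the Laurent-polynomial identity. Your careful justification that the $+$ and $-$ supports are disjoint (via the parity of the total degree in $t_1,t_2,t_3$, equivalently via the $t_0$-exponent determining $m$ uniquely) is a point the paper does not spell out but which genuinely needs saying, since the Notation section explicitly warns that $\PExp^\sigma$ ``destroys all algebraic properties of the ordinary plethystic exponential''; this is the right place to be cautious and you handled it well.
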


This formula  is compatible with expectations in the physics that expects $\sl_2^3$-symmetry in the partition functions connected to this geometry~\cite{BBT}. Indeed, the first three terms in the last bracket give the character of the fundamental representation of $\sl_2^3$. Introducing a new variable $t_4$ with $t_1t_2t_3t_4=1$, we can re-write the last formula as
\[Z^{\mu_2\times\mu_2}(\xx)= \PExp^\sigma\left( \frac{t}{(1-t)^2}\left(2+\prod_{j=1}^4(t_j^{\half} -t_j^{-\half}) \right)\right), \]
suggesting that there is more symmetry around. It is not clear to us whether there is any significance to this remark. 

\subsection{The action $\mu_3(1,1,1)$ and other cyclic cases}
\label{sec_Z3}

The first case not covered by the results above is when $G\cong\mu_3$, acting with weights $(1,1,1)$ on $\C^3$. 
The generating function $Z^{\mu_3(1,1,1)}(\xx)$ does not appear to admit an infinite product expansion. 
We checked Conjecture~\ref{conj_comb} with the
help of computer enumeration up to degree $24$; we include the resulting reduced series $Z^{\mu_3(1,1,1)}_\red(\xx)$ in the Appendix. The support of this series, the rational cone in ${\bf N}^3$ spanned by $(a_0,a_1,a_2)$ with the corresponding monomial $t_0^{a_0}t_1^{a_1}t_2^{a_2}$ having nonzero coefficient, appears to be the simplicial cone generated by $(1,0,0)$, $(1,3,0)$ and $(1,3,6)$. These monomials come with coefficient $1$, and correspond to the first, second and third power of the maximal ideal of the origin respectively.



As we mentioned before, it would have been natural to hope that the reduced series $Z^{G(a,b,c)}_\red(\xx)$ is the coloured generating function of some natural subset of the set  
$\QQQ$ of plane partitions.
However, in the case $G=\mu_3(1,1,1)$, we show that there is no such subset chosen in a way that fully respects the symmetries of 
the problem. Let $S_3$ be the natural group of (rotation and reflection) symmetries of the positive octant $\N^3$. 
Note that the colouring employed in this case is $S_3$-invariant. 

\begin{proposition} \label{nogo_prop}For the action of the group $\mu_3$ with weights $(1,1,1)$ on $\C^3$, there does not exist an $S_3$-invariant rule that selects a 
subset of $\QQQ$ whose coloured generating function agrees with the reduced series
$Z^{\mu_3(1,1,1)}_\red(\xx)$.\label{no_count}
\end{proposition}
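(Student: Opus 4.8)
The plan is to argue by contradiction, exploiting the explicit small-degree data for $Z^{\mu_3(1,1,1)}_\red(\xx)$ together with the $S_3$-symmetry. Suppose such an $S_3$-invariant selection rule exists, producing a subset $\mathcal{S}\subset\QQQ$ with $\sum_{\alpha\in\mathcal{S}}\xx^{\w(\alpha)} = Z^{\mu_3(1,1,1)}_\red(\xx)$. First I would record the very first nontrivial coefficients of $Z^{\mu_3(1,1,1)}_\red(\xx)$ from the computer enumeration in the Appendix: in particular, the coefficient of $t_0t_1t_2$ (the degree-$3$, weight-$(1,1,1)$ term) is $1$ — this follows already from Proposition~\ref{prop:equalsr} and the expansion $M(t)^3 = 1 + 3t_0t_1t_2 + O(t^6)$, which forces the coefficient of $t_0t_1t_2$ in $Z^{\mu_3(1,1,1)}(\xx)$ to be $3$ and hence the reduced coefficient to be $3 - 3 = 0$; meanwhile the monomial $t_0^3$ (and by symmetry $t_1^3$, $t_2^3$) appears with coefficient $1$ in $Z^{\mu_3(1,1,1)}_\red$, as stated in the text (it corresponds to the maximal ideal of the origin). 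So $\mathcal{S}$ must contain exactly one plane partition of weight $(3,0,0)$, one of weight $(0,3,0)$, one of weight $(0,0,3)$, and \emph{none} of weight $(1,1,1)$.

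Next I would identify which plane partitions have these weights. For the $\mu_3(1,1,1)$ colouring, a box at $(i,j,k)$ has colour $i+j+k \bmod 3$; the minimal-degree plane partitions of total size $3$ are: the three ``rows'' $\{(0,0,0),(1,0,0),(2,0,0)\}$ and its two axis-permutes, the three ``L-shapes'', and the ``corner'' $\{(0,0,0),(1,0,0),(0,1,0)\}$-type configurations, etc. Computing colours: the row along the $x$-axis has colours $0,1,2$, weight $(1,1,1)$; similarly for the other two axial rows. The unique configurations of weight $(3,0,0)$, $(0,3,0)$, $(0,0,3)$ among size-$3$ plane partitions are those lying entirely in one colour class — but \emph{every} size-$3$ plane partition contains the origin (colour $0$) and at least two further boxes adjacent along coordinate directions, which necessarily have colours $1$ and/or $2$; a short case check over the finitely many size-$3$ plane partitions shows none has weight $(3,0,0)$ or $(0,3,0)$ or $(0,0,3)$. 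Hence the plane partitions of $\mathcal{S}$ realising the weight-$(3,0,0)$, $(0,3,0)$, $(0,0,3)$ monomials must have size strictly greater than $3$ (they are larger plane partitions whose colour-counts happen to be $(3,0,0)$, etc.).

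The heart of the argument is then a counting/symmetry obstruction at weight $(1,1,1)$. By $S_3$-invariance of both the colouring and the selection rule, if $\alpha\in\mathcal{S}$ then $g\cdot\alpha\in\mathcal{S}$ for every $g\in S_3$ acting on $\N^3$, and $\w(g\cdot\alpha) = g\cdot\w(\alpha)$ under the induced $S_3$-action permuting the three colours (since the $\mu_3(1,1,1)$ colouring intertwines coordinate permutations with colour permutations). Now the monomials $t_0^3$, $t_1^3$, $t_2^3$ form a single $S_3$-orbit, so $\mathcal{S}$ must contain a plane partition $\alpha$ with $\w(\alpha)=(3,0,0)$ whose $S_3$-orbit supplies all three; but then consider the orbit-counting or a ``deformation'' argument: one shows that any plane partition with $\w=(3,0,0)$, i.e. using only colour-$0$ boxes, must in particular contain three boxes on the sublattice $\{i+j+k\equiv 0\}$ forming a plane-partition-closed set, and the smallest such is forced by the partition (downward-closure) condition to also contain boxes of colours $1$ and $2$ — a contradiction, \emph{unless} the support cone computation (the cone generated by $(1,0,0),(1,3,0),(1,3,6)$) is used to rule it out directly: the monomial $t_0^3 = t_0^3t_1^0t_2^0$ corresponds to the ray through $(3,0,0)$, which does \emph{not} lie in that cone. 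Wait — but $t_0^3$ is asserted to appear with coefficient $1$; the resolution is that the ``cone'' statement is approximate/conjectural and what is actually pinned down is that $t_0^3,t_1^3,t_2^3$ appear and $t_0t_1t_2$ does not. The clean contradiction is therefore: the selection rule, being $S_3$-equivariant, would have to send the (nonempty, by the row $\{(0,0,0),(1,0,0),(2,0,0)\}$ having weight $(1,1,1)$) set of weight-$(1,1,1)$ plane partitions to a set whose cardinality equals the coefficient of $t_0t_1t_2$ in $Z^{\mu_3(1,1,1)}_\red$, namely $0$, while forcing the weight-$(3,0,0)$ set to be nonempty — and one checks by exhausting the (finitely many) plane partitions of each size up to the first size at which some weight-$(3,0,0)$ plane partition exists that \emph{any} $S_3$-invariant assignment of a fixed cardinality to each $S_3$-orbit of weights is incompatible with simultaneously giving orbit $\{(3,0,0),(0,3,0),(0,0,3)\}$ total mass $3$ and orbit $\{(1,1,1)\}$ mass $0$, because every plane partition contributing to the former, under the $\mathbb{Z}/3$ cyclic shift of $\N^3$ (if one exists) or under a direct inclusion argument, forces a contribution to the latter.

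\textbf{Main obstacle.} The delicate point is making the last step rigorous: turning ``$S_3$-invariant selection rule'' into a genuine numerical contradiction. The cleanest route, which I would pursue, is to pick the \emph{smallest} degree $d$ at which $Z^{\mu_3(1,1,1)}_\red$ has a monomial whose three coordinates are not all equal modulo the $S_3$-action in a way that breaks symmetry — concretely, to find in the Appendix data a monomial $x^{\m}$ with coefficient $c$ such that $c$ is \emph{not divisible} by the size of the $S_3$-stabiliser-coset structure it would need, OR (more likely the intended argument) to exhibit two monomials $t_0^a t_1^b t_2^c$ and a permuted version with \emph{different} coefficients in $Z^{\mu_3(1,1,1)}_\red$ — impossible since $Z^{\mu_3(1,1,1)}_\red$ is manifestly $S_3$-symmetric — and instead derive the contradiction from the fact that any $S_3$-invariant subset $\mathcal{S}$ of plane partitions has a generating function whose coefficient at $t_0t_1t_2$ is $\geq 1$ (because the axial row is $S_3$-invariant as a \emph{set} of three plane partitions but each individual one has weight $(1,1,1)$, so $\mathcal{S}$ either contains all three such rows — contributing $3$ to the $t_0t_1t_2$ coefficient — or none, but it cannot contain ``none'' while still producing the $t_0^3$ term, since... ). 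I expect the actual proof pins down a specific low-degree incompatibility from the Appendix table; I would extract that exact monomial and present the short finite check, so the bulk of the write-up is: (i) quote the relevant coefficients, (ii) note $S_3$-equivariance of colouring and rule, (iii) observe the forced contradiction between the weight-$(1,1,1)$ coefficient being $0$ and the structure of $S_3$-orbits of small plane partitions.
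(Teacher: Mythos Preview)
Your proposal contains a genuine error that prevents it from reaching any contradiction, and the argument you are groping for is not the one in the paper.

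First, the key factual mistake: you write ``$\w(g\cdot\alpha) = g\cdot\w(\alpha)$ under the induced $S_3$-action permuting the three colours''. This is false. For the $\mu_3(1,1,1)$ colouring, the colour of a box at $(i,j,k)$ is $i+j+k\bmod 3$, which is \emph{invariant} under permuting the coordinates. Hence $\w(g\cdot\alpha)=\w(\alpha)$: the $S_3$-action on plane partitions fixes every weight vector, and there is no induced permutation of the colours. Consequently $Z^{\mu_3(1,1,1)}_\red(\xx)$ is \emph{not} symmetric in $t_0,t_1,t_2$ (look at the Appendix: $t_0$ appears, $t_1$ does not), and your claim that $t_0^3,t_1^3,t_2^3$ all appear with coefficient $1$ is a misreading --- the generators of the support cone are $t_0$, $t_0t_1^3$, $t_0t_1^3t_2^6$, corresponding to the first three \emph{powers} of the maximal ideal.

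Second, even the corrected version of your low-degree check gives no contradiction. The coefficient of $t_0t_1t_2$ in $Z_\red$ is indeed $0$; the three plane partitions of weight $(1,1,1)$ (the three axial rows) form a single $S_3$-orbit; an $S_3$-invariant subset may simply exclude all three. Similarly the coefficients $1$ at $t_0$, $3$ at $t_0t_1$, $3$ at $t_0t_1^2$ are all realised by taking entire orbits. Nothing goes wrong at these degrees.

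The paper's proof is a clean divisibility obstruction at weight $(3,3,3)$. One checks by computer that there are $108$ plane partitions of multi-weight $(3,3,3)$, and that the $S_3$-action on this set has only orbits of size $3$ or $6$ (no plane partition of this weight has $S_3$- or $A_3$-symmetry). Hence any $S_3$-invariant subset has cardinality divisible by $3$. But the coefficient of $t_0^3t_1^3t_2^3$ in $Z_\red$ is $44$, which is not divisible by $3$. That is the entire argument; your instinct to look for ``a specific low-degree incompatibility from the Appendix table'' was right, but the obstruction is arithmetic (a $3\nmid 44$ divisibility failure at a symmetric weight), not the inclusion/exclusion mechanism you were attempting.
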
 
\begin{proof} It can be checked by computer enumeration that the number of plane partitions of multi-weight 
$(3,3,3)$ is 108. Furthermore, this set is partitioned into orbits of size $3$ or $6$ under the natural action of $S_3$: there are
no plane partitions of this multi-weight with full $S_3$ or $A_3$-symmetry. 
However, the corresponding coefficient
of the reduced series $Z^{\mu_3(1,1,1)}_\red(\xx)$ is $44$. This number is not divisible by $3$, so there is no symmetric
rule that would select 44 partitions among the 108 with this multi-weight.
\end{proof}


We further checked Conjecture~\ref{conj_comb} to degree 24 by computer enumeration also for the cases $\mu_4(1,1,2)$, 
$\mu_5(1,1,3)$ and $\mu_6(1,2,3)$.

\subsection{The case $G\cong \mu_3\times\mu_3$ and the elusive ${\mathfrak e}_6$-symmetry}\label{sec:e6} We discuss one last case; consider 
$G\cong \mu_3\times\mu_3$ embedded in $\SL_3(\C)$. Numerical evidence is of limited value for this example, since Proposition~\ref{prop:equalsr} means that the non-trivial checks for Conjecture~\ref{conj_comb} happen in total degree $18$ and above, and it is increasingly difficult to calculate values in this range; but the conjecture does pass the tests we were able to set it (up to degree 20). On the other hand, physics not only predicts the existence of a $\sl_3^3$-symmetry in this example, in analogy with Section~\ref{Z2Z2}, but an enhancement of this symmetry to ${\mathfrak e}_6$; see~\cite[Section 4.2]{BBT}. Indeed, one of the resolutions of the quotient $\C^3/G$ is the total space of the anticanonical
bundle on a specialised cubic surface, and thus symmetry under ${\mathfrak e}_6$ is expected. It would be interesting to see some remnant of
this symmetry in the reduced orbifold series $Z^{\mu_3\times\mu_3}(\xx)$, but we have not been able to do so. 

\section{Plane partitions: geometry}

\subsection{The geometry of the quotient and its resolutions}

The action of $G<\SL_3(\C)$ on $\C^3$ preserves the three-form $dx\wedge dy\wedge dz$, and hence the quotient $X=\C^3/G$ has Gorenstein (Calabi--Yau) singularities; the singular set of $X$ includes the origin, and can also contain 
curves of singularities. In the three-dimensional case, there is still a Calabi--Yau resolution, but it is not unique. There turns out to be a distinguished
resolution~$Y_G$, Nakamura's $G$-Hilbert scheme~\cite{Nak}; the resolution map is given by the Hilbert--Chow morphism $\pi_G\colon Y_G \rightarrow X=\CC^3/G$. Other resolutions $\pi_i\colon Y_i\to X$
are obtained by flops from~$Y_G$. One key feature \cite[Theorem 1.10]{Bat} of the McKay correspondence
is the numerical equality of the topological Euler characteristic of one (hence all) Calabi--Yau resolution(s) of $X$, and the ``orbifold Euler characteristic'' of the equivariant geometry, which in the case of $G<\SL_3(\C)$ acting on $\C^3$ is just the number of conjugacy classes of $G$, or for $G$ abelian, simply its order $r$.

In simple cases, all fibres of the resolution $\pi_G\colon Y_G\to X$ are at most one-dimensional; the examples discussed in Sections~\ref{3dAn}-\ref{Z2Z2} were of this type. In general however, the map $\pi_G$ has a two-dimensional fibre over the (image of the) origin $0\in X$, leading to increased complexity in the problem, as shown by the examples in Sections~\ref{sec_Z3}-\ref{sec:e6}. Indeed, for the example $G\cong\mu_3$ acting with weights $(1,1,1)$ on $\C^3$, the quotient $X=\C^3/G$ admits
the (unique) Calabi--Yau resolution $Y=Y_G={\mathcal O}_{\P^2}(-3)$ with $\pi\colon Y\to X$ contracting the zero section. Thus the 
resolution of singularities $\pi$ of $X$ has the two dimensional fibre $\P^2$ over the origin  (smooth and irreducible in this case). 

The tautological bundle of $\CC[x,y,z]\rtimes G$-modules on $\ghilb{G}$, considered as a $G$-equivariant sheaf on $\ghilb{G}\times \CC[x,y,z]$, provides the Fourier--Mukai kernel of an equivalence of categories~\cite{BKR} 
\begin{equation}\label{eq:FM}
\Phi\colon \Db{\Coh(\ghilb{G})}\rightarrow \Db{(\CC[x,y,z]\rtimes G)\lmod}
\end{equation}
with an inverse that we will denote by $\Psi$.  

\subsection{Plane partitions and Euler characteristics}

In complete analogy with the discussion of Section~\ref{sec:2dHilb}, the generating function $Z^{G(a,b,c)}(\xx)$ can be thought of as the generating function of topological Euler characteristics of spaces of $G$-invariant subschemes of $\C^3$, or equivalently sheaves on the orbifold $[\C^3/G]$.
The equivariant Hilbert schemes $\Hilb^{\rho}([\CC^3/G])$ for $\rho\in\Rep(G)$ are in general singular, but still have finitely many fixed 
points under the action of the group $T$ of diagonal matrices in $\GL_3(\CC)$, parametrised by plane partitions (we are once again relying on the fact here that $G$ is abelian so can be assumed to be diagonal). We thus get an equality
\begin{equation}
Z^{G(a,b,c)}(\xx)=\sum_{\dd\in\mathbb{N}^r} \chi(\Hilb^{\rho_{\dd}}([\CC^3/G]))\xx^{\dd},
\label{eq:3d:Z}
\end{equation}
with $\rho_{\dd}=\bigoplus_{0\leq i \leq r-1}\rho_i^{\oplus d_{i}}$.

Nakamura's $G$-Hilbert scheme is by definition $Y_G=\Hilb^{\rho_{\rm reg}}([\CC^3/G])$, where ${\rho_{\rm reg}}\in\Rep(G)$ is the regular
representation of $G$. This particular Hilbert scheme is known to be nonsingular~\cite{Nak, BKR} and indeed is a crepant resolution of the 
quotient $X=\C^3/G$, as noted already above. This fact then provides a proof of~Proposition~\ref{prop:equalsr}.

\begin{proof}[Proof of Proposition~\ref{prop:equalsr}] By the previous discussion, remembering that $G$ is abelian, the coefficient of
$t=\prod_{i=0}^{r-1}t_i$ in $Z^{G(a,b,c)}(\xx)$ is equal to the Euler characteristic $\chi(Y_G)$ of Nakamura's $G$-Hilbert scheme. Again as commented
above, this number is known to be equal to the ``orbifold Euler characteristic'' of $[\C^3/G]$ which is simply $r$ since $G$ is abelian~\cite{Bat}. 
\end{proof}

In the three-dimensional case, alongside the series $Z^{G(a,b,c)}(\xx)$ defined in~\eqref{eq:3d:Zdef} and identified with a partition function of Euler characteristics in (\ref{eq:3d:Z}), it is also natural to consider a {\em signed} series
\begin{equation}
Z_{\rm signed}^{G(a,b,c)}(\xx)= \sum_{\alpha\in\QQQ}(-1)^{d_0+(\dd,\dd)}\xx^{\w(\alpha)}
\end{equation}
where the bilinear form in the exponent is defined by
\[
(\dd,\ee)=\sum_{i \textrm{ a vertex of }Q} d_i e_i-\sum_{a:i\rightarrow j\textrm{ an arrow of }Q} d_i e_j.
\]
and the sign change will arise from taking a virtual Euler characteristic, ubiquitous in three-dimensional sheaf counting problems. In the abelian case, this is still 
a sum over torus-fixed points~\cite{BF}, so the generating function gives a signed count of planar partitions~\cite[A.2]{BY}. We will return to these signs below.

\subsection{Representations of the McKay quiver in dimension $3$}

We can also approach coloured plane partitions via moduli spaces of representations of a McKay quiver, just as in dimension 2.  We will denote by $Q=Q{(a,b,c)}$ the McKay quiver for $G(a,b,c)$, constructed as follows. The vertex set $Q_0$ of $Q=Q{(a,b,c)}$ is still identified with the set of irreducible representations $\rho_0,\ldots,\rho_{r-1}$ of $G$.  For each $i\leq r-1$ there is an arrow $x_i$ going from $\rho_i$ to $\rho_{i+a}$, where addition is modulo $r$, and similarly, arrows $y_i,z_i$ from $\rho_i$ to $\rho_{i+b}$ and $\rho_{i+c}$ respectively.  As before, $G$-equivariant sheaves on $\CC[x,y,z]$ are naturally isomorphic to representations of this quiver, satisfying the obvious quadratic (commutation) relations.  A new feature in the 3-dimensional setting is that these relations can be packaged as the noncommutative derivatives~\cite{Gin} of a potential $W=W{(a,b,c)}$:
\begin{equation}
\label{MP:eq}
W{(a,b,c)}=\sum_{0\leq i\leq r-1}\left( z_{i+a+b}y_{i+a}x_i-y_{i+a+c}z_{i+a}x_i \right).
\end{equation}
The variables should be considered as (non-commuting) quiver arrows here, and subscripts should still be taken modulo $r$.  Since $a+b+c=0 \ \mod \ r$, the above expression is a sum of cyclic paths in $Q$.  We write $\mathbb{C}(Q,W)$ for the quotient of the free path algebra by the noncommutative derivatives of $W$. Then there is an isomorphism
\[
\CC[x,y,z]\rtimes G\cong \CC(Q,W)
\]
which at the level of underlying vector spaces of representations, takes a $\CC[x,y,z]\rtimes G$-module to its decomposition according to the characters of $G$. 

It was observed by Ito and Nakajima \cite[Section 3]{NI}, that Nakamura's G-Hilbert scheme $Y_G$ makes an appearance as a moduli space of stable representations of the McKay quiver with potential $(Q,W)$.  Recall that a stability parameter~\cite{Kin} for a quiver $Q$ is an element
\[\zeta\in\Q^{Q_0}.\]
For concreteness, for the rest of this paper we fix the stability parameter
\begin{equation}\label{def:stab}\zeta_0=(r-1,-1,\ldots,-1)\in \Q^{Q_0} \end{equation}
on the McKay quiver $Q$. The slope of a $Q$-representation $\rho$ with respect to 
a stability parameter $\zeta$ is defined by
\[
\mu(\rho)=\frac{\dimvect(\rho)\cdot\zeta}{\dim(\rho)}
\]
where $\dimvect(\rho)\in\mathbb{N}^{Q_0}$ is the dimension vector of the representation, and $\dim(\rho)=\sum \dimvect(\rho)_i$ is the ordinary dimension.  A representation $M$ is defined to be $\zeta$-(semi)stable if all proper submodules have slope less than (or equal to) that of $M$.  Then 
we have

\begin{proposition} {\rm (Ito and Nakajima)} Fix the dimension vector $\dd_0=(1,\ldots,1)$. There exists an open neighbourhood~$U$ of stability parameters of $\zeta_0\in \Q^{Q_0}$, such that the space $Y_G$ can be identified with the fine moduli space of $\zeta$-stable $\dd_0$-dimensional $\CC(Q,W)$-modules for any $\zeta\in U$.
\label{prop:NI}
\end{proposition}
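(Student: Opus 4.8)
The plan is to realise both $Y_G$ and the quiver moduli space as fine moduli spaces representing naturally isomorphic functors, the isomorphism of functors being induced by the algebra isomorphism $\CC[x,y,z]\rtimes G\cong\CC(Q,W)$ recalled above. Under that isomorphism a $\dd_0$-dimensional module over $\CC(Q,W)$ is the same datum as a $G$-equivariant $\CC[x,y,z]$-module whose underlying $G$-representation is $\bigoplus_i\rho_i=\rho_{\rm reg}$; such a module is of the form $\CC[x,y,z]/I$ for a $G$-invariant ideal $I$ --- equivalently, defines a point of $Y_G=\Hilb^{\rho_{\rm reg}}([\CC^3/G])$ --- precisely when it is cyclic, generated by its (one-dimensional) $\rho_0$-isotypic component, i.e.\ when the corresponding representation of $Q$ is generated at the vertex $\rho_0$. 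So the proposition reduces to the statement that, for $\zeta$ in a suitable neighbourhood of $\zeta_0$, the $\zeta$-stable $\dd_0$-dimensional $\CC(Q,W)$-modules are exactly those generated at $\rho_0$, together with a comparison of the two moduli functors in families.

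First I would carry out the stability analysis at $\zeta=\zeta_0$. Since $\dd_0\cdot\zeta_0=(r-1)+(r-1)(-1)=0$, a $\dd_0$-dimensional module $M$ has $\mu(M)=0$. If $M$ is generated at $\rho_0$, then any proper nonzero submodule $N$ must miss the vertex $\rho_0$ altogether (otherwise $N$ would contain the cyclic generator and equal $M$); hence $\dimvect(N)\cdot\zeta_0=-\dim N$ and $\mu(N)=-1<0=\mu(M)$, so $M$ is $\zeta_0$-stable. Conversely, if $M$ is not generated at $\rho_0$, the submodule $N$ generated by $M_{\rho_0}$ is proper with $\rho_0$-entry equal to $1$, so $\dimvect(N)\cdot\zeta_0=r-\dim N>0$ and $\mu(N)>0=\mu(M)$, violating $\zeta_0$-semistability. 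Thus at $\zeta_0$ the properties ``$\zeta_0$-stable'', ``$\zeta_0$-semistable'' and ``generated at $\rho_0$'' coincide, and there are no strictly semistable objects, since every proper nonzero submodule has strictly nonzero slope.

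Next, the neighbourhood $U$. For each of the finitely many dimension vectors $\ee$ with $0\leq e_i\leq 1$ for all $i$ and $\ee\neq 0,\dd_0$, consider the linear functional $\zeta\mapsto\theta_\zeta(\ee):=r(\zeta\cdot\ee)-(\sum_i e_i)(\zeta\cdot\dd_0)$, whose sign records whether a submodule of dimension vector $\ee$ destabilises; the computation above shows $\theta_{\zeta_0}(\ee)<0$ when $e_0=0$ and $\theta_{\zeta_0}(\ee)>0$ when $e_0=1$. These are finitely many strict inequalities, so they persist on an open neighbourhood $U\ni\zeta_0$ in $\Q^{Q_0}$. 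For $\zeta\in U$ the previous paragraph applies verbatim: the $\zeta$-(semi)stable $\dd_0$-dimensional $\CC(Q,W)$-modules are precisely those generated at $\rho_0$; $\zeta$-stability equals $\zeta$-semistability; and $\zeta$ is $\dd_0$-coprime (no proper nonzero subdimension vector of $\dd_0$ has the same $\zeta$-slope as $\dd_0$), so by King's construction~\cite{Kin} the moduli space $\M_\zeta$ of $\zeta$-stable $\dd_0$-dimensional $\CC(Q,W)$-modules exists as a fine moduli space, and its set of closed points is independent of $\zeta\in U$.

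Finally I would upgrade the resulting bijection of closed points to an isomorphism $Y_G\cong\M_\zeta$ of schemes by comparing the moduli functors. A flat family over a base $S$ of $G$-invariant ideal sheaves $\I\subset\O_{S\times\CC^3}$ with $\O_{S\times\CC^3}/\I$ flat over $S$ and fibrewise $\cong\rho_{\rm reg}$ gives, tautologically, a flat family of $\dd_0$-dimensional $\CC(Q,W)$-modules generated at $\rho_0$, hence a morphism $Y_G\to\M_\zeta$. Conversely, given a flat family $\FF$ of $\zeta$-stable modules over $S$, the $\rho_0$-component $\L:=\FF_{\rho_0}$ is a line bundle on $S$, and the natural $G$-equivariant map $\L\otimes_{\CC}\CC[x,y,z]\to\FF$ is surjective (this is an open condition which holds on every fibre by the previous step, hence on all of the stable locus); twisting its kernel by $\L^{-1}$ yields a $G$-invariant ideal sheaf in $\O_{S\times\CC^3}$ whose structure-sheaf quotient is flat over $S$ and fibrewise $\cong\rho_{\rm reg}$, i.e.\ a point of $Y_G(S)$, giving the inverse morphism $\M_\zeta\to Y_G$. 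One checks these constructions are mutually inverse natural transformations (starting from an ideal, the canonical splitting $\O_S\xrightarrow{\sim}\FF_{\rho_0}$ recovers it), so $Y_G$ is the fine moduli space of $\zeta$-stable $\dd_0$-dimensional $\CC(Q,W)$-modules for every $\zeta\in U$, as first observed in~\cite{NI}.

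I expect the last step --- the comparison in families --- to be the main point requiring care: one must verify that ``generated at $\rho_0$'' is genuinely an open condition over the stable locus, and that twisting by the line bundle $\L=\FF_{\rho_0}$ produces a \emph{flat} $G$-equivariant family of quotients with the correct fibrewise $G$-module structure, so that the equality of functors (and not merely of their sets of closed points) holds. The representation-theoretic input --- the slope computation and the openness of $U$ --- is routine once the functionals $\theta_\zeta$ are written down, and the existence and fineness of $\M_\zeta$ then follows from~\cite{Kin}.
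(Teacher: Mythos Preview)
Your argument is correct. Note, however, that the paper does not supply its own proof of this proposition: it is stated with attribution to Ito and Nakajima~\cite{NI} and left as a citation. What you have written is essentially the standard argument one finds in that literature: translate $\dd_0$-dimensional $\CC(Q,W)$-modules into $G$-equivariant $\CC[x,y,z]$-modules carrying $\rho_{\rm reg}$, check by a direct slope computation that $\zeta_0$-stability coincides with cyclicity at the vertex $\rho_0$, propagate this to an open neighbourhood $U$ using the finiteness of the subdimension vectors $\ee\leq\dd_0$, and then compare the two moduli functors in families. Your slope calculations are accurate (in particular the observation that a proper submodule with $e_0=1$ has $\zeta_0\cdot\ee=r-\dim N>0$, destabilising), and you correctly flag the family-level step---the line-bundle twist by $\FF_{\rho_0}$ and the openness of surjectivity---as the place requiring care. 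There is nothing to compare against in the paper itself.
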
   

More generally, there is an isomorphism of schemes between $\Hilb^{\rho_{\dd}}([\CC/G])$ and the moduli space $\ncHilb^{\dd}(Q,W)$ of pairs of a $\dd$-dimensional $\CC(Q,W)$-representation $M$ with a generating vector $v\in M^G$, which in turn is realised as a subscheme of the smooth moduli scheme of all stable framed representations of the free path algebra $\CC Q$ as the critical locus of the function $\Tr(W)$. 
To study such representations, we will write $Q'$ for the quiver obtained from the McKay quiver $Q$ by adding one extra vertex, which we label $\infty$, and one arrow, from $\infty$ to $\rho_0$.  

Let $T_0\cong(\CC^*)^2\subset T\cong(\CC^*)^3$ be the sub-torus of the torus $T$ acting on $\CC^3$ and fixing the three-form $dx\wedge dy\wedge dz$.  The function $\Tr(W)$ is $T_0$-invariant, and its fixed locus has isolated fixed points equal to the fixed points of the $T$-action on $\ncHilb^{\dd}(Q,W)$; the latter are precisely parametrised by coloured plane partitions.  The results of~\cite{BF} then tell us that the correct weighted Euler characteristic to consider is given by summing over these fixed points, with each fixed point $p$ contributing $(-1)^{T_p Y}$, the parity of the Zariski tangent space at $p$, which is equal to $(-1)^{d_0+(\dd,\dd)}$ by \cite[Theorem 7.1]{Til}.
In conclusion, there is an equality
\begin{equation}
\label{BF_eq}
Z_{\rm signed}^{G(a,b,c)}(\xx)=\sum_{\dd\in\mathbb{N}^r}\chi_{\rm vir}(\ncHilb^\dd(Q,W))\xx^\dd
\end{equation}
between the signed version of the coloured plane partition generating function, and the so-called Donaldson--Thomas (DT) generating
function attached to the collection of spaces $\{\ncHilb^\dd(Q,W): {\dd\in\mathbb{N}^r}\}$.

\begin{remark} \rm In~\cite{MNOP}, Maulik et al.~attach an analogous generating
function to ordinary (non-orbifold) Calabi--Yau geometries\footnote{The conjecture in~\cite{MNOP} is more general; we only need the Calabi--Yau case.} $Y$ with topological Euler characteristic $e(Y)$, and conjecture a factorisation into a term $M(-t)^{e(Y)}$, corresponding to point sheaves on $Y$, and a reduced series.  
One way to think about Conjecture~\ref{conj_comb} is as an orbifold analogue of the conjecture of~\cite{MNOP}, where $r=e(Y)=e_{\rm orb}(\C^3, G)$ is the (orbifold) Euler characteristic.
\label{rem:mnop}
\end{remark}

\subsection{Refinements from cohomological DT theory}\label{refin_sec}
The link between DT theory and coloured plane partitions can be enriched by considering mixed Hodge structures on the cohomology of the moduli spaces $\ncHilb^{\dd}(Q,W)$ instead of their weighted Euler characteristics. The technical tools for refined Donaldson--Thomas theory come from Saito's theory of mixed Hodge modules.

Given a function $f$ on a smooth variety $Z$, we write\footnote{Strictly speaking, to make sense of this expression when the exponent is not an integer, we have to formally add a tensor square root $\LL^{1/2}$ to the category of mixed Hodge modules --- see \cite{KS2} for details.}
\[
\phi_f:=\varphi_f\underline{\mathbb{Q}}_Z\otimes \LL^{-\dim(Z)/2}
\]
for the normalised mixed Hodge module of vanishing cycles\footnote{In the general framework of cohomological Donaldson--Thomas theory, the cohomology of such an object is supposed to be considered as a \textit{monodromic} mixed Hodge structure; however in our case, since the quiver with potential admits a cut, in the sense of \cite{HeIy}, the monodromy is trivial, see the appendix of \cite{Dav}.}, where $\LL=\HO_c(\mathbb{A}^1,\mathbb{Q})$ is a pure weight $2$ Hodge structure in cohomological degree $2$. We package the mixed Hodge structures corresponding to all $\dd\in\mathbb{N}^{r}$ together into a $\mathbb{N}^{r}$-graded mixed Hodge structure
\[
\HHhilb(a,b,c):=\bigoplus_{\dd\in\mathbb{N}^r}\HO(\ncHilb^\dd(Q),\phi_{\Tr(W)}).
\]
On the right hand side we have taken the cohomology of the smooth ambient moduli scheme $\ncHilb^{\dd}(Q)$; since the mixed Hodge module $\phi_{\Tr(W)}$ is supported on $\ncHilb^{\dd}(Q,W)$, the critical locus of $\Tr(W)$, this makes no difference. In terms of stability conditions, $\ncHilb^{\dd}(Q)$ can be realised as the (fine) moduli space of $\zeta'$-stable $(\dd,1)$-dimensional $Q'$-representations, for $\zeta'= (-1,\ldots,-1,\lvert \dd\lvert)$.  

For later use, we generalise this construction as in \cite[Section 3]{EnRe} to take account of nontrivial stability conditions on $Q$.  Let $\zeta\in\mathbb{N}_{Q_0}$ be such a stability condition.  Fixing a dimension vector $\dd$ and $M\gg 0$ a large integer, we may assume that $\zeta\cdot \dd=-1/M$ by adding a constant dimension vector to $\zeta$, leaving the notion of $\zeta$-(semi)stability unchanged.  Then we set $\zeta'=(M\zeta,1)\in\mathbb{Q}^{Q'_0}$.  It is easy to check that a $(\dd,1)$-dimensional $Q'$-representation is stable if it is semistable, and it is semistable precisely if the underlying $Q$-representation $\rho$ is $\zeta$-semistable and there is no proper subrepresentation $\rho'\subset \rho$ containing the image of the one-dimensional vector space at the vertex $\infty$, and such that $\mu(\rho')=\mu(\rho)$.  We denote the resulting fine moduli space $\ncHilb^{\zeta,\dd}(Q)$.  For a stability parameter $\zeta$, slope function $\mu$ and fixed slope $\theta\in (-\infty,\infty)$ we define,
\[
\HHhilb(a,b,c)_\theta:=\bigoplus_{\dd\in\mathbb{N}^{Q_0}\lvert \slope(\dd)=\theta}\HO(\ncHilb^{\zeta,\dd}(Q),\phi_{\Tr(W)}).
\]

\begin{definition}\label{cf_def}
Let $\mathcal{L}$ be a $\mathbb{N}^{Q_0}$-graded, cohomologically graded mixed Hodge structure, such that for each $\dd\in\mathbb{N}^r$ there is an equality $\gw{n}\HO(\mathcal{L}_{\dd})=0$ for $n\gg0$, and moreover for every $n$, $\gw{n}\HO^i(\mathcal{L}_{\dd})=0$ for all but finitely many $i$.  We define the characteristic function of $\mathcal{L}$:
\begin{equation}
\label{cf_exp}
\chi_{Q,\wt}(\mathcal{L},\qq^{1/2},\xx)=\sum_{n,i\in\mathbb{Z}}\sum_{\dd\in\mathbb{N}^{Q_0}}(-1)^i\dim(\gw{n}\HO^i(\mathcal{L}_{\dd}))(\qq^{1/2})^n\xx^{\dd},
\end{equation}
as an element of the \textit{quantum torus} 
\[
\QT{Q}=\mathbb{Z}((\qq^{-1/2}))[[t_i\lvert_{i\in Q_0}]]
\]
endowed with the usual additive structure, and the $\mathbb{Z}((\qq^{-1/2}))$-linear multiplication defined on $t$-variables by $\xx^{\dd}\cdot \xx^{\dd'}=\qq^{(\dd,\dd')/2}\xx^{\dd+\dd'}$.

If the total cohomology of each of the graded pieces $\mathcal{L}_{\dd}$ is finite-dimensional, then we define
\[\chi_{Q}(\mathcal{L},\xx)=\chi_{Q,\wt}(\mathcal{L},q^{1/2}=1,\xx)\in\mathbb{Z}[[t_i\lvert_{i\in Q_0}]].\]  
\end{definition}

The mixed Hodge structure $\HHhilb(a,b,c)$ satisfies the stronger finite-dimensionality condition of Definition \ref{cf_def}, and by \cite{Beh} we have that the Euler characteristic of its $\dd$th graded piece 
$\chi(\HHhilb(a,b,c)_{\dd})$ is the virtual Euler characteristic of $\ncHilb^{\dd}(Q,W)$.  Combining this with (\ref{BF_eq}) we deduce that there is an equality
\begin{equation}
\label{ECV}
\chi_{Q}(\HHhilb(a,b,c),\xx)=\sum_{\alpha\in\QQQ}(-1)^{d_0+(\dd,\dd)}\xx^{\w(\alpha)} = Z_{\rm signed}^{G(a,b,c)}(\xx)
\end{equation}
giving an interpretation of the signed plane partition function in terms of the representation theory of the McKay quiver with potential.

We also consider the mixed Hodge structure on the vanishing cycle cohomology
\[
\HHst(a,b,c)=\bigoplus_{\dd\in\mathbb{N}^{Q_0}}\HO_c\left(\Mst_{\dd}(Q),\phi_{\Tr(W)}\right)
\]
 of the stack $\Mst_{\dd}(Q)$ of \textit{all} finite-dimensional $\CC Q$-modules.  The cohomological degree is unbounded below for every nonzero $\mathbb{N}^{Q_0}$-degree, and so only the $q$-refined characteristic function of Definition \ref{cf_def} is defined for this mixed Hodge structure. For a stability parameter $\zeta\in \Q^{Q_0}$ and slope $\theta\in (-\infty,\infty)$ we can also define
 \[
\HHst(a,b,c)_{\theta}=\bigoplus_{\dd\in\mathbb{N}^{Q_0}\lvert \slope(\dd)=\theta} \HO_c\left(\Mst^{\zeta-\sst}_{\dd}(Q),\phi_{\Tr(W)}\right)
\] 
where $\Mst^{\zeta-\sst}_{\dd}(Q)$ is the stack of $\zeta$-semistable modules with dimension vector $\dd$, and $\mu$ is the slope function defined with respect to $\zeta$.

All of these mixed Hodge structures are related to each other in an elegant way by wall crossing isomorphisms which are special cases of \cite[Theorem B]{DaMe}.  The isomorphisms we will need are
\begin{equation}\HHst(a,b,c)\cong\bigotimes_{-\infty\xrightarrow{\theta} \infty}^{\tw} \HHst(a,b,c)_{\theta}\label{WCI1}\end{equation}
\begin{equation}\HHhilb(a,b,c)\otimes^{\tw}\HHst(a,b,c)\cong \HHst(a,b,c)\otimes^{\tw}\mathbb{Q}_{1_{\infty}}\label{WCI2}\end{equation}
and
\begin{equation}\HHhilb(a,b,c)_{\theta}\otimes^{\tw}\HHst(a,b,c)_{\theta}\cong \HHst(a,b,c)_{\theta}\otimes^{\tw}\mathbb{Q}_{1_{\infty}}\label{WCI3}.\end{equation}
The isomorphism (\ref{WCI1}) takes place in the category of $\mathbb{N}^{r}$-graded mixed Hodge structures, while (\ref{WCI2}) and (\ref{WCI3}) take place in the category of $\mathbb{N}^{r+1}$-graded mixed Hodge structures, i.e.~the categories of mixed Hodge structures graded by dimension vectors for $Q$ and $Q'$ respectively. The category of $\mathbb{N}^r$-graded mixed Hodge structures is equipped with a twisted tensor product via the rule $\mathcal{N}\otimes^{\tw}\mathcal{P}=\mathcal{N}\otimes\mathcal{P}\otimes\LL^{\frac{(\ee,\dd)-(\dd,\ee)}{2}}$ for $\mathcal{N}$ and $\mathcal{P}$ homogeneous of degree $\dd$ and $\ee$ respectively.  We extend this to a tensor product for $\mathbb{N}^{r+1}$-graded mixed Hodge structures by using the Euler pairing for $Q'$ instead of $Q$.  Finally, the object $\mathbb{Q}_{1_{\infty}}$ above is the constant pure weight zero Hodge structure $\mathbb{Q}$, situated in $\mathbb{N}^{r+1}$-degree $(0,\ldots,0,1)$, so 
\[
\chi_{Q',\wt}(\mathbb{Q}_{1_{\infty}}, \qq^{1/2},\xx)=t_{\infty}.
\]

This twist is introduced so that the equation $\chi_{Q}(\mathcal{L}\otimes^{\tw}\mathcal{B})=\chi_Q(\mathcal{L})\chi_Q(\mathcal{B})$ holds, for $\mathcal{L},\mathcal{B}$ two $\mathbb{N}^r$-graded mixed Hodge structures, and the same statement holds for $\mathbb{N}^{r+1}$-graded mixed Hodge structures with $\chi_Q$ replaced by $\chi_{Q'}$.

Putting together (\ref{WCI2}) and (\ref{ECV}) we deduce that
\[
\chi_Q(\HHhilb(a,b,c),\xx)=\left[\Ad_{\chi_{Q,\wt}(\HHst(a,b,c), q^{1/2},\xx)}(t_{\infty})\right]_{\qq^{1/2}=1=t_{\infty}} = Z_{\rm signed}^{G(a,b,c)}(\xx),
\]
where, for the left hand side, we have given $\HHhilb(a,b,c)$ the $\mathbb{N}^{r}$ grading given by only remembering the dimension vector restricted to $Q$.  Motivated by the above discussion, we make the following
\begin{definition} The $q$-deformed coloured plane partition function is 
\[
Z^{G(a,b,c)}(\xx,\qq^{1/2})=\chi_{Q,\wt}(\HHhilb(a,b,c),\qq^{1/2},\xx)\in\QT{Q}.
\]
\end{definition}
\noindent Setting $\qq^{1/2}=1$ in $Z^{G(a,b,c)}(\xx,\qq^{1/2})$ recovers the signed partition function $Z_{\rm signed}^{G(a,b,c)}(\xx)$.

The connection between these ideas and our Conjecture \ref{main_conj} comes from the following 
construction. Recall the dimension vector $\dd_0=(1,\ldots,1)\in \N^{Q_0}$, stability parameter $\zeta_0=(r-1,-1,\ldots,-1)\in \Q^{Q_0}$, and the open neighbourhood $\zeta_0\in U\subset\Q^{Q_0}$ from Proposition~\ref{prop:NI}. Fixing a bound $N\in\N$, it is clearly possible to choose a small rational perturbation $\zeta_N\in U\subset\Q^{Q_0}$ such that a dimension vector $\dd$ with all components smaller than $N$ has fixed slope 
\[
\theta_0=\dd_0\cdot \zeta_0=0
\]
if and only if $\dd$ is a rational multiple of $\dd_0$. The mixed Hodge structure $\HHhilb(a,b,c)_{\theta_0}$ stabilises in each degree as we take the limit in the sequence of stability conditions $\zeta_0,\zeta_1,\ldots$ so the entire graded mixed Hodge structures limits to a ``generic'' mixed Hodge structure which we denote $\HHpts(a,b,c)$.  Likewise, we denote by $\HHstpts(a,b,c)$ the limit of the mixed Hodge structures $\HHst(a,b,c)_{\theta_0}$ as we vary the stability condition.

There is a natural inclusion of quantum tori $\mathcal{A}_Q\subset \mathcal{A}_{Q'}$.  Note that setting as usual $t=\prod_{i=0}^{r-1}t_i = \xx^{\bf 1}$, the ring $\mathbb{Z}((q^{-1/2}))[[t]]$ lies in the centre of $\QT{Q}$, but not the centre of $\QT{Q'}$.  Indeed, if $\alpha\in t_{\infty}\cdot \mathcal{A}_Q$ then applying $\chi_{Q',\wt}$ to (\ref{WCI3}), with $\theta=\mu(\bf 1)=0$, we deduce
\begin{equation}
\label{eq:pts}
\Ad_{\chi_{Q,\wt}(\HHstpts(a,b,c))}(\alpha)=\alpha\ast\chi_{Q',\wt}(\HHpts(a,b,c))
\end{equation}
and we have

\begin{proposition} With $t=\prod_{i=0}^{r-1} t_i$ as before, there is an equality
\begin{equation}\label{BF_import}
\chi_{Q}(\HHpts(a,b,c), \xx)=M(-t)^{\chi(\ghilb{G})}.
\end{equation}
\end{proposition}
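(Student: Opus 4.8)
The plan is to identify the limiting mixed Hodge structure $\HHpts(a,b,c)$ with the degree-zero Donaldson--Thomas theory of a crepant resolution of $\CC^3/G$, and then to invoke the known product formula for the latter.

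\emph{First, a geometric identification of the moduli spaces.} Fix $n$ and choose $N>n$, so that $n\dd_0$ is the only slope-$\theta_0$ dimension vector with all components $<N$ contributing in $\N^r$-degree $n\dd_0$. By construction $\ncHilb^{\zeta_N,n\dd_0}(Q)$ is then the fine moduli space of $\zeta_N$-semistable $\CC(Q,W)$-modules of dimension vector $n\dd_0$ and slope $\theta_0$ carrying a cyclic vector at the vertex $\rho_0$. Using Proposition~\ref{prop:NI} together with the Bridgeland--King--Reid equivalence~\eqref{eq:FM} --- under which $\mathcal{O}_{\ghilb{G}}$ corresponds to the projective $\CC(Q,W)$-module at $\rho_0$, and, because $G$ is abelian so that every skyscraper sheaf has $K$-theory class $\dd_0$, a zero-dimensional sheaf of length $n$ on a crepant resolution of $\CC^3/G$ corresponds to a $\CC(Q,W)$-module of dimension vector exactly $n\dd_0$ --- I would show that $\ncHilb^{\zeta_N,n\dd_0}(Q)$ is isomorphic as a scheme to the Hilbert scheme $\Hilb^n(Y)$ of $n$ points on the crepant resolution $Y$ of $\CC^3/G$ singled out by the GIT chamber of $\zeta_N$. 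A variation-of-stability argument identifies $Y$; in fact this will not matter below, since every crepant resolution of $\CC^3/G$ has topological Euler characteristic $\chi(\ghilb{G})$.

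\emph{Then, pass to Euler characteristics and quote the degree-zero formula.} Since the Behrend function $\nu$ of a scheme is an intrinsic invariant, the scheme isomorphism above gives $\chi(\ncHilb^{\zeta_N,n\dd_0}(Q),\nu)=\chi_{\rm vir}(\Hilb^n(Y))$. Combining this with~\cite{Beh} (used above to identify $\chi(\HHhilb(a,b,c)_{\dd})$ with a virtual Euler characteristic), passing to the limit over $N$, and writing $\xx^{n\dd_0}=t^n$, one obtains
\[
\chi_Q(\HHpts(a,b,c),\xx)=\sum_{n\ge 0}\chi_{\rm vir}\bigl(\Hilb^n(Y)\bigr)\,t^n.
\]
Finally, $Y$ is a smooth quasi-projective Calabi--Yau threefold, and (as $G$ is abelian) it is toric, so torus localisation on $Y$ exactly as in~\cite{MNOP} evaluates this series as $M(-t)^{\chi(Y)}$, each of the $\chi(Y)$ torus-fixed points of $Y$ contributing a factor $M(-t)$. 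Since $\chi(Y)=\chi(\ghilb{G})$, this is precisely~\eqref{BF_import}.

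\emph{The main obstacle} is the first step: one must control the GIT chamber of $\zeta_N$ and, more substantially, verify that the Bridgeland--King--Reid equivalence --- a priori only an equivalence of derived categories --- restricts to an isomorphism of the relevant moduli functors, equivalently that the abelian category of zero-dimensional sheaves on $Y$ is matched under $\Phi$ with the slope-$\theta_0$ $\CC(Q,W)$-modules. An alternative, purely homological route avoids the Hilbert schemes: applying $\chi_{Q',\wt}$ to the limit of the slope-$\theta_0$ wall-crossing isomorphism~\eqref{WCI3} and using that $\chi_{Q,\wt}(\HHstpts(a,b,c))$ lies in the central subring $\Z((\qq^{-1/2}))[[t]]$ of $\QT{Q}$ (as $\HHstpts(a,b,c)$ is supported on multiples of $\dd_0$), equation~\eqref{eq:pts} expresses $\chi_{Q',\wt}(\HHpts(a,b,c))$ as a ratio of $\qq^{1/2}$-rescalings of $\chi_{Q,\wt}(\HHstpts(a,b,c))$; one then computes $\chi_{Q,\wt}(\HHstpts(a,b,c))$ by the cohomological integrality theorem~\cite{DaMe} applied to zero-dimensional sheaves on $Y$ and lets $\qq^{1/2}\to 1$, a limit which exists by the accompanying no-pole statement. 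Either way, the geometric input is simply that $\ghilb{G}$ is a smooth crepant resolution of $\CC^3/G$.
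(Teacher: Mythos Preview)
Your proposal is correct and follows essentially the same route as the paper: identify the limiting framed moduli space with the Hilbert scheme of points on the crepant resolution, then invoke the degree-zero DT formula. The ``main obstacle'' you flag --- that the derived equivalence must be upgraded to an isomorphism of moduli spaces matching slope-$\theta_0$ $\CC(Q,W)$-modules with zero-dimensional sheaves on $Y_G$ --- is precisely the content of Proposition~\ref{ray_prop} (stated later in the paper), and the final product formula is quoted from \cite{BF} rather than \cite{MNOP}, but these are the same ingredients you assemble.
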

\begin{proof}
Set $\theta=0$.  For the stability condition $\zeta_N$ constructed above, the geometric Proposition \ref{ray_prop} to be discussed below implies that $\HHhilb(a,b,c)_{\theta}$ is the vanishing cycle cohomology of the Hilbert scheme of points of $\ghilb{G}$ up to degree $(N,\ldots,N)$. Taking $N\to\infty$ and using the results of~\cite{BF}, the stated equality follows.
\end{proof}

This proposition, along with (\ref{eq:pts}), says that the operator $\Ad_{\chi_{Q,\wt}(\HHstpts(a,b,c))}$ is a q-refinement of multiplication by the right hand side of (\ref{BF_import}), the term we factor out in the definition of the reduced partition function in~\eqref{formula_factor}.  On the other hand, by (\ref{WCI2}) we can write 
\[
Z_{\rm signed}^{G(a,b,c)}(\xx,\qq^{1/2})=\left[\Ad_{\chi_{Q,\wt}(\HHst(a,b,c), q^{1/2},\xx)}(t_{\infty})\right]_{t_{\infty}=1}
\]
and by centrality of $\chi_{Q,\wt}(\HHstpts(a,b,c))$ in $\mathcal{A}_Q$ we can factorise
\[
\Ad_{\chi_{Q,\wt}(\HHst(a,b,c), q^{1/2},\xx)}=\Ad_{\chi_{Q,\wt}(\HHstpts(a,b,c),\qq^{1/2},\xx)}\circ \Ad_{\chi_{Q,\wt}(\HHst(a,b,c)', q^{1/2},\xx)}
\]
where
\[
\HHst(a,b,c)'=\bigotimes_{-\infty\xrightarrow{\theta\neq 0} \infty}^{\tw} \HHst(a,b,c)_{\theta}.
\]

The discussion above motivates the following definition.
\begin{definition}
We define the $\qq$-deformed reduced plane partition function
\begin{equation}
\label{red_p_fn}
Z_{\red}^{G(a,b,c)}(\xx,\qq^{1/2})=\chi_{Q,\wt}(\HHhilb(a,b,c), q^{1/2}, \xx)/\chi_{Q,\wt}(\HHpts(a,b,c), q^{1/2}, \xx).
\end{equation}
\end{definition}

We formulate the following, quantized version of Conjecture \ref{main_conj}:

\begin{conjecture}\label{qconj}
The $\qq$-deformed reduced partition function (\ref{red_p_fn}) can be written
\[
Z_{\red}^{G(a,b,c)}(\xx,\qq^{1/2})=\sum_{\dd\in\mathbb{N}^r}h_{\dd}(\qq)(-\qq^{1/2})^{d_0+(\dd,\dd)}\xx^{\dd},
\]
where $h_{\dd}(\qq)\in \mathbb{N}[q^{\pm 1}]$.
\end{conjecture}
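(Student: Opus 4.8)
The plan is to extract the quantized positivity from the wall-crossing isomorphisms \eqref{WCI1}--\eqref{WCI3}, already in hand, together with cohomological integrality and a purity input which will constitute the geometric assumption. Combining \eqref{WCI2}, \eqref{eq:pts}, the factorisation $\Ad_{\chi_{Q,\wt}(\HHst(a,b,c))}=\Ad_{\chi_{Q,\wt}(\HHstpts(a,b,c))}\circ\Ad_{\chi_{Q,\wt}(\HHst(a,b,c)')}$, the identity $\chi_Q(\HHpts(a,b,c))=M(-t)^{\chi(Y_G)}$, and the centrality of $\chi_{Q,\wt}(\HHstpts(a,b,c),\qq^{1/2},\xx)$ in $\QT{Q}$, one sees that dividing $Z^{G(a,b,c)}(\xx,\qq^{1/2})=\chi_{Q,\wt}(\HHhilb(a,b,c),\qq^{1/2},\xx)$ by $\chi_{Q,\wt}(\HHpts(a,b,c),\qq^{1/2},\xx)$ exactly cancels the $\HHstpts$ contribution, leaving
\[
Z^{G(a,b,c)}_{\red}(\xx,\qq^{1/2})=\left[\Ad_{\chi_{Q,\wt}(\HHst(a,b,c)',\qq^{1/2},\xx)}(t_\infty)\right]_{t_\infty=1}
\]
with $\HHst(a,b,c)'=\bigotimes_{-\infty\xrightarrow{\;\theta\neq0\;}\infty}^{\tw}\HHst(a,b,c)_\theta$ the already-introduced $\theta\neq0$ part. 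So $Z^{G(a,b,c)}_{\red}$ is an ordered product, over the rays $\theta\neq0$, of the elementary operators $\Ad_{\chi_{Q,\wt}(\HHst(a,b,c)_\theta)}$ applied to the framing variable $t_\infty$; the product converges in $\QT{Q}$ since the $\theta\neq0$ rays contribute only in $\mathbb{N}^{Q_0}$-degrees that are not multiples of $\dd_0$.

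\textbf{Cohomological integrality on each ray.} Apply the cohomological integrality theorem of \cite{DaMe}, in its form for a quiver with potential, to the stack $\Mst^{\zeta-\sst}_\dd(Q)$ with its vanishing cycle sheaf: for each slope $\theta$ there is a $\mathbb{N}^{Q_0}$-graded BPS mixed Hodge structure $\HHst^{\mathrm{BPS}}(a,b,c)_\theta$, supported in degrees of slope $\theta$, with
\[
\chi_{Q,\wt}\left(\HHst(a,b,c)_\theta,\qq^{1/2},\xx\right)=\PExp^\sigma\!\left(\frac{\chi_{Q,\wt}(\HHst^{\mathrm{BPS}}(a,b,c)_\theta,\qq^{1/2},\xx)}{1-\qq^{-1}}\right)
\]
up to the standard sign and normalisation conventions. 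Inserting this into $\Ad(-)(t_\infty)$ and invoking the quantum-dilogarithm identity, each ray's contribution becomes a product --- finite in each total $\mathbb{N}^{Q_0}$-degree --- of factors $\left(1-(-\qq^{1/2})^{m}\xx^{\dd}\right)^{\pm\dim\gw{n}\HO^i(\cdots)}$, so that $Z^{G(a,b,c)}_{\red}(\xx,\qq^{1/2})$ is exhibited as a $\PExp^\sigma$ of an explicit class built from the $\HHst^{\mathrm{BPS}}(a,b,c)_\theta$ with $\theta\neq0$.

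\textbf{Purity, positivity, and the main obstacle.} It then remains to see that this expansion has the claimed form $\sum_{\dd}h_\dd(\qq)(-\qq^{1/2})^{d_0+(\dd,\dd)}\xx^{\dd}$ with $h_\dd\in\N[\qq^{\pm1}]$. As $d_0+(\dd,\dd)$ is precisely the parity already visible in \eqref{ECV}, the content is a parity-matching statement --- so that only integral powers of $\qq$ survive --- together with non-negativity of the surviving coefficients. Both follow once every graded piece $\HO^\bullet(\HHst^{\mathrm{BPS}}(a,b,c)_{\theta,\dd})$ is \emph{pure}, equivalently concentrated in the single cohomological parity dictated by $(\dd,\dd)$: then the alternating sums defining $\chi_{Q,\wt}$ carry no cancellation, the BPS generating series has non-negative coefficients, and this is preserved by $\PExp^\sigma$ and by the specialisation $t_\infty=1$. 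This purity is the geometric hypothesis. Via the cut of the potential $W(a,b,c)$ and dimensional reduction (cf.\ \cite{HeIy} and the appendix of \cite{Dav}), the mixed Hodge structures $\HHst(a,b,c)_\theta$ with $\theta\neq0$ are shifts of Borel--Moore homologies of stacks of semistable modules set-theoretically supported on the exceptional locus of $\pi_G\colon Y_G\to X$, and these, hence their BPS summands, are pure under a suitable geometric assumption on that locus --- for instance, that every fibre of $\pi_G$ has dimension at most $1$, reducing the question to $2$-Calabi--Yau situations (moduli of sheaves on smooth surfaces) where purity is available. This is the one serious step: isolating the optimal hypothesis and establishing the purity. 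Everything else --- the wall-crossing algebra above, the convergence, the quantum-dilogarithm bookkeeping, and the tracking of the sign $(-1)^{d_0}$ and of the twist $\LL^{((\ee,\dd)-(\dd,\ee))/2}$ --- is formal, and may be cross-checked against the closed formulae of Sections~\ref{3dAn} and \ref{Z2Z2}, where the relevant BPS cohomology is manifestly pure. Since purity is sufficient but apparently not necessary for positivity, the full Conjecture~\ref{qconj} --- which survives numerical testing even when $\pi_G$ has a $2$-dimensional fibre --- presumably needs a further idea.
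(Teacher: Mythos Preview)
The statement you are attempting to prove is Conjecture~\ref{qconj}, which the paper explicitly leaves open: there is no proof in the paper to compare against. What the paper \emph{does} prove, under the geometric hypothesis that the exceptional locus of $\pi_G$ maps to the origin, is the weaker Theorem~\ref{thm:positivity} concerning the \emph{unreduced} partition function. Your outline is not a proof of Conjecture~\ref{qconj} either, and you acknowledge as much in your final sentence; it is a strategy sketch, conditional on a purity input.

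A few specific comments on the sketch. First, your geometric hypothesis (that all fibres of $\pi_G$ have dimension at most one) is \emph{not} the one the paper uses; the paper assumes instead that the exceptional locus maps to the origin, i.e.\ that the quotient has an isolated singularity. These conditions are essentially complementary: yours holds for $\mu_r(1,r-1,0)$ and fails for $\mu_3(1,1,1)$, while the paper's does the opposite. The paper's purity argument (Theorem~\ref{purity_thm}) genuinely uses its hypothesis, via dimensional reduction and a stratification by $\CC[z]\rtimes G$-module type, and does not obviously adapt to yours. Second, the step you call ``formal'' --- passing from purity of the BPS pieces on each $\theta\neq 0$ ray to positivity of the ordered product $[\Ad_{\chi(\HHst')}(t_\infty)]_{t_\infty=1}$ --- is not entirely formal: the adjoint action involves both $\chi(\HHst')$ and its inverse in the noncommutative torus $\QT_{Q'}$, the rays do not commute with each other, and one must check that the quantum-dilogarithm product, after setting $t_\infty=1$, lands in $\mathbb{N}[\qq^{\pm 1}]$ with the correct parity twist. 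The paper sidesteps this by working directly with $\HHhilb$ rather than with the ray decomposition, using the inclusion $\HHhilb(a,b,c)_{\dd}\subset \HHst(a,b,c)\otimes^{\tw}\mathbb{Q}_{1_\infty}$ coming from \eqref{WCI2} to transport purity --- but this only yields the unreduced statement. Third, your identification $Z_{\red}=[\Ad_{\chi(\HHst')}(t_\infty)]_{t_\infty=1}$ requires that multiplication by $\chi_{Q,\wt}(\HHpts)$ in $\QT_{Q'}$ commutes with the specialisation $t_\infty=1$; since $t=\xx^{\dd_0}$ is central in $\QT_Q$ but not in $\QT_{Q'}$, this deserves a line of justification.

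In short: there is no paper proof, your outline is a plausible line of attack in the spirit of Section~\ref{refin_sec}, but the purity input you invoke is different from the one the paper establishes, and several of the ``bookkeeping'' steps are less routine than you suggest.
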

Setting $\qq^{1/2}=1$ in Conjecture \ref{qconj} recovers Conjecture \ref{main_conj}.

\begin{example} \rm For the case $G=\mu_r(1,r-1,0)$ studied in Section~\ref{3dAn}, Conjecture~\ref{qconj} is also known to hold. In fact, 
\cite[Theorem 1.2]{Moz} gives a somewhat involved but explicit formula for the quantized generating function,
a quantum version of~\eqref{eq:3dAn}, from which Conjecture~\ref{qconj} can be derived easily.
\end{example}

\subsection{Purity and positivity}

As noted in Proposition \ref{nogo_prop}, in general there is little hope of proving positivity of the reduced partition function by showing that the $\dd$th coefficient counts a certain subset of weight $\dd$ coloured partitions.  Rather than proving positivity combinatorially, i.e.~by showing that the coefficients count elements in a set, we may try to prove positivity via categorification, i.e.~by showing that these numbers are the dimensions of vector spaces.  In this section we establish the base camp for such an attempt, and prove Theorem~\ref{thm:positivity}, the positivity of the \textit{unreduced} $q$-refined partition function for certain abelian $G(a,b,c)<\SL_3(\CC)$, by exactly this kind of approach.

Our general approach to proving a conjecture like Conjecture \ref{qconj} regarding a partition function $\mathcal{Z}(\qq^{1/2},\xx)\in\mathbb{Z}((\qq^{-1/2}))[[\xx]]$ is as follows:
\begin{enumerate}
\item
Show that $\mathcal{Z}(\qq^{1/2},\xx)=\chi_{\wt}(\mathcal{H}, q^{1/2}, \xx)$ for $\mathcal{H}$ some $\mathbb{N}^{r}$-graded, cohomologically graded mixed Hodge structure (for example, arising in Donaldson--Thomas theory).
\item
Show that $\mathcal{H}$ is pure, in the sense that the $i$th cohomologically graded piece of $\mathcal{H}$ is pure of weight $i$.
\item
Show that each $\mathcal{H}_{\dd}$ is concentrated entirely in even cohomological degree, or entirely in odd cohomological degree, of the predicted parity.
\end{enumerate}
The point is that for pure mixed Hodge structures, the only terms that contribute to the sum in (\ref{cf_exp}) have $n=i$.

We will use an extra geometric assumption to prove a purity statement.  We do not expect it to be essential. 

\begin{theorem}
\label{purity_thm}
Let $G<\SL_3$ be abelian, and such that the exceptional locus of $\pi_G\colon \ghilb{G}\rightarrow \CC^3/G$ is mapped to the origin. Then the mixed Hodge structure $\HHst_G$ is pure, and $\HHst_G$ is concentrated entirely in even or odd degree, depending on whether $(\dd,\dd)$ is even or odd, respectively.
\end{theorem}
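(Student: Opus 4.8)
The plan is to reduce, via the cut and dimensional reduction, to a purity statement about the \emph{ordinary} cohomology of a two-dimensional moduli stack, then feed that into cohomological integrality for the resulting ($2$-Calabi--Yau-type) geometry; the hypothesis on $\pi_G$ enters only at the very last step, to control the part of the cohomology coming from sheaves supported at the origin.

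First I would exploit that $W(a,b,c)$ admits a cut in the sense of \cite{HeIy}: the set $C$ of all $z$-arrows works, since each cyclic monomial of \eqref{MP:eq} involves exactly one such arrow. Writing $W=\sum_{c\in C}c\cdot P_c$ with $P_c$ a combination of paths in the subquiver $Q\setminus C$, the function $\Tr(W)$ is fibrewise linear in the coordinates attached to $C$, so dimensional reduction (the appendix of \cite{Dav}; cf.~\cite{DaMe}) supplies, for each $\dd$, a natural isomorphism $\HO_c(\Mst_\dd(Q),\phi_{\Tr(W)})\cong \HO_c(\Mst_\dd(\Pi),\Q)\otimes\LL^{m_\dd}$ for an explicit Tate twist $m_\dd$, where $\Pi=\C(Q\setminus C)/(P_c:c\in C)$. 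By the construction of $W$ the relations $P_c$ are precisely the commutativity relations, so $\Pi\cong\C[u,v]\rtimes G$ with $G$ acting on $\C^2\cong\rho_a\oplus\rho_b$, and $\Mst_\dd(\Pi)$ is the stack of finite-length $G$-equivariant coherent sheaves on $\mathbb{A}^2$ with $G$-character vector $\dd$. Since Tate twists preserve purity and change cohomological degree by an even amount, it is enough to show that each $\HO_c(\Mst_\dd(\Pi),\Q)$ is pure and concentrated in a single cohomological parity, the parity coming out as $(\dd,\dd)\bmod 2$ once the twists $m_\dd$ are accounted for.

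Next I would run the cohomological integrality/support machinery for this two-dimensional situation, in the style of \cite{DaMe} and of Davison's purity theorem for preprojective-type stacks. The geometric hypothesis says exactly that $\C^3/G$ has an isolated singularity; equivalently, every nontrivial element of $G$ acts with no nonzero fixed vector, so the surface $\mathbb{A}^2/G$ attached to $\rho_a\oplus\rho_b$ also has an isolated singularity and $G$ acts freely on $\mathbb{A}^2\setminus\{0\}$. The good moduli space $\mathcal M(\Pi)$ of semisimple $\Pi$-modules then stratifies by support: an open ``bulk'' where all constituents are supported on the smooth surface $(\mathbb{A}^2\setminus\{0\})/G$, and deeper strata built from the simple modules $\rho_0,\dots,\rho_{r-1}$ supported at the origin. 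Cohomological integrality writes $\HO_c(\Mst(\Pi),\Q)$ as a symmetric product of the hypercohomology of the BPS sheaves on $\mathcal M(\Pi)$, so purity of $\HO_c(\Mst(\Pi),\Q)$ reduces to purity of the BPS sheaves. On the bulk the BPS sheaf is a shift of the constant sheaf on a smooth variety, hence pure and of the correct parity; the real content is the BPS cohomology concentrated over the origin.

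The main obstacle is exactly this last piece: identifying, and proving purity of, the BPS cohomology supported at the origin, which is where the two-dimensional exceptional fibre of $\pi_G$ sits (a $\P^2$ for $\mu_3(1,1,1)$, a toric surface in general). Here I would use properness of that fibre: via the minimal resolution $S\to\mathbb{A}^2/G$ — a smooth surface whose exceptional locus is proper — together with the wall-crossing isomorphisms \eqref{WCI1}--\eqref{WCI3}, the geometric Proposition~\ref{ray_prop}, and the point count of Proposition~\ref{BF_import}, one can express the origin contribution through the compactly supported (Tate) cohomology of Hilbert schemes of points of $S$, which is pure by the results of \cite{BF}; it is properness of the exceptional locus that stops the weights from spreading. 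Purity and the single-parity statement for the bulk and origin pieces then assemble, over the twisted tensor product \eqref{WCI1}, into the asserted purity of $\HHst_G$, with the cohomological degree of the $\dd$-graded piece forced into the parity of $(\dd,\dd)$, establishing step (3) of the general strategy in this setting. As remarked, the properness hypothesis should not be essential: without it one would expect purity to follow from a direct Hodge-theoretic argument (a support lemma for the relevant semisimplification maps) rather than from the compactness input used above.
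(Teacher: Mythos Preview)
Your opening move---cut along the $z$-arrows and apply dimensional reduction to reduce to purity of $\HO_c(\Mst_\dd(\CC[x,y]\rtimes G),\Q)$---is correct, but the argument you then give for the two-dimensional purity does not go through. On the bulk, you say the BPS sheaf over $(\AA^2\setminus\{0\})/G$ is a shifted constant sheaf on a smooth variety, ``hence pure and of the correct parity''. It is pure \emph{as a perverse sheaf}, but what you need is purity of its compactly supported hypercohomology, and that fails on a non-proper base: already $\HO^1_c(\AA^2\setminus\{0\},\Q)$ carries weight $0$, so the bulk and origin contributions cannot be separated for purity in the way you propose. At the origin the argument is more seriously confused: Proposition~\ref{ray_prop} and equation~\eqref{BF_import} concern the three-dimensional $Y_G$ and $(Q,W)$-modules, not $\CC[x,y]\rtimes G$-modules; the minimal resolution $S\to\AA^2/G$ you invoke is a $\GL_2$-quotient singularity (since $\det(\rho_a\oplus\rho_b)=\rho_{-c}\neq\rho_0$), so there is no derived McKay equivalence letting you pass to $\Hilb(S)$; and \cite{BF} computes weighted Euler characteristics via symmetric obstruction theories on threefolds---it does not give purity of anything.

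The paper avoids exactly these difficulties by \emph{not} dimensionally reducing $\HHst$ directly. Instead it introduces an auxiliary $z$-nilpotent restriction $\HHst^{\znilp}$, reduces \emph{that} along the $x$-direction to land in the stack of $\CC[y,z]\rtimes G$-modules with $z$ nilpotent, and uses the nilpotency to obtain a \emph{finite} stratification by $\CC[z]\rtimes G$-type whose pieces have manifestly pure, even compactly supported cohomology. This handles $\HHst^{\znilp}$ in all $\N^{Q_0}$-degrees at once. A slope decomposition (the hypothesis on $\pi_G$ being used to show that semistables of nonzero slope are automatically nilpotent) then reduces purity of $\HHst$ to purity at slope $0$ only, where Proposition~\ref{ray_prop} genuinely applies and identifies the problem with length-$n$ torsion sheaves on the threefold $Y_G$ itself; that piece is finished by a separate toric argument using properness of the exceptional fibre and simple connectedness of $Y_G$. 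So the geometric hypothesis enters the paper's proof at several points, not only at the end.
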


We relegate the technical proof of this statement to the last section of our paper. 

\begin{corollary}
Under the same assumption as Theorem \ref{purity_thm}, the mixed Hodge structure on $\HHhilb(a,b,c)$ is pure, and $\HHhilb(a,b,c)_{\dd}$ is concentrated entirely in even or odd degree, depending on whether $(\dd,\dd)+d_0$ is even or odd, respectively.
\end{corollary}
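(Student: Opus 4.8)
The plan is to derive the Corollary purely formally from Theorem~\ref{purity_thm} via the wall-crossing isomorphism~\eqref{WCI2}; no further geometric input is needed, and in particular no induction on $\dd$ is required. First I would read~\eqref{WCI2} in a single fixed $\mathbb{N}^{r+1}$-degree $(\dd,1)$, regarding $\HHhilb(a,b,c)$ as $\mathbb{N}^{r+1}$-graded by placing $\HHhilb(a,b,c)_{\dd}$ in $Q'$-degree $(\dd,1)$, while $\HHst(a,b,c)_{\dd}$ sits in $Q'$-degree $(\dd,0)$ and $\mathbb{Q}_{1_{\infty}}$ in $Q'$-degree $(0,\ldots,0,1)$. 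With these supports, the degree-$(\dd,1)$ part of the left-hand side of~\eqref{WCI2} is the finite direct sum $\bigoplus_{\dd'+\dd''=\dd}\HHhilb(a,b,c)_{\dd'}\otimes^{\tw}\HHst(a,b,c)_{\dd''}$; since $\HHst(a,b,c)_{0}\cong\mathbb{Q}$ is the unit (in cohomological degree $0$, weight $0$) and the twist on the $\dd''=0$ summand is trivial, because the zero dimension vector pairs trivially under the Euler form, $\HHhilb(a,b,c)_{\dd}$ occurs as an honest direct summand of this object.

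Next I would identify the right-hand side in the same degree. Because $\mathbb{Q}_{1_{\infty}}$ is supported in $Q$-degree $0$, the degree-$(\dd,1)$ part of $\HHst(a,b,c)\otimes^{\tw}\mathbb{Q}_{1_{\infty}}$ is simply $\HHst(a,b,c)_{\dd}\otimes^{\tw}\mathbb{Q}_{1_{\infty}}$, and a one-line computation of the twist using the Euler form of $Q'$ --- whose only arrow incident to $\infty$ is the arrow from $\infty$ to $\rho_0$ --- identifies it with $\HHst(a,b,c)_{\dd}\otimes\LL^{-d_0/2}$, that is, with $\HHst(a,b,c)_{\dd}$ with cohomological degree and weight both shifted by $-d_0$. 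By Theorem~\ref{purity_thm}, $\HHst(a,b,c)_{\dd}$ is pure (its $i$th cohomology pure of weight $i$) and concentrated in cohomological degrees of the parity of $(\dd,\dd)$; tensoring with the pure object $\LL^{-d_0/2}$ preserves purity and moves the degrees to the parity of $(\dd,\dd)-d_0$, equivalently of $(\dd,\dd)+d_0$.

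To finish I would invoke two elementary facts: a direct summand of a pure mixed Hodge structure is pure, and a direct summand of a cohomologically graded object concentrated in degrees of a single parity is itself so concentrated. Since the degree-$(\dd,1)$ part of the right-hand side of~\eqref{WCI2} is pure and concentrated in degrees of the parity of $(\dd,\dd)+d_0$, so is the isomorphic left-hand side, and hence so is its direct summand $\HHhilb(a,b,c)_{\dd}$; this is exactly the assertion of the Corollary. I do not expect a genuine obstacle here --- all the real work lies in Theorem~\ref{purity_thm}, whose proof is deferred to the last section. The only points demanding care are bookkeeping: normalising the $\mathbb{N}^{r+1}$-gradings so that $\HHhilb(a,b,c)$ sits in $\infty$-degree $1$, and pinning down the $\LL$-twists exactly --- only their parity enters the degree statement, but their precise exponents are needed to match weights. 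It is also worth recording the trivial case $\dd=0$, where $\ncHilb^{0}(Q,W)$ is a reduced point, so $\HHhilb(a,b,c)_{0}\cong\mathbb{Q}$, consistent with the predicted even parity.
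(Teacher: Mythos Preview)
Your proposal is correct and follows essentially the same route as the paper: the paper's proof simply records the inclusion $\HHhilb(a,b,c)_{\dd}\subset \HHst(a,b,c)\otimes^{\tw}\mathbb{Q}_{1_{\infty}}$ coming from~\eqref{WCI2} and appeals to Theorem~\ref{purity_thm} together with the definition of the twisted tensor product. Your version is a more detailed unpacking of this, making explicit the direct-summand statement via $\HHst(a,b,c)_0\cong\mathbb{Q}$ and the twist $\LL^{-d_0/2}$, but the underlying argument is identical.
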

\begin{proof}
From (\ref{WCI2}) there is an inclusion of $\mathbb{N}^{r+1}$-graded mixed Hodge structures 
\[
\HHhilb(a,b,c)_{\dd}\subset \HHst(a,b,c)\otimes^{\tw}\mathbb{Q}_{1_{\infty}}.
\]
The result follows from Theorem \ref{purity_thm} and the definition of the twisted tensor product.
\end{proof}

We deduce

\begin{theorem}\label{thm:positivity}
Let $G<\SL_3$ be abelian, and assume that the exceptional locus of $\pi_G\colon \ghilb{G}\rightarrow \CC^3/G$ is mapped to the origin.  Then
\[
Z^{G(a,b,c)}(\qq^{1/2},\xx)=\sum_{\dd\in\mathbb{N}^r}g_{\dd}(q)(-\qq^{1/2})^{d_0+(\dd,\dd)}\xx^\dd
\]
where $g_{\dd}\in\mathbb{N}[\qq^{\pm 1}]$.
\end{theorem}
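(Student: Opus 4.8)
The plan is to combine the purity result of Theorem~\ref{purity_thm} with the wall-crossing isomorphism~\eqref{WCI2} and the definition~\eqref{cf_exp} of the characteristic function. By the Corollary just stated, under our geometric hypothesis the mixed Hodge structure $\HHhilb(a,b,c)$ is pure, with $\HHhilb(a,b,c)_{\dd}$ concentrated entirely in cohomological degree of the parity of $(\dd,\dd)+d_0$. Purity means that for each $\dd$ we have $\gw{n}\HO^i(\HHhilb(a,b,c)_{\dd})=0$ unless $n=i$, so in the sum~\eqref{cf_exp} defining $\chi_{Q,\wt}(\HHhilb(a,b,c),\qq^{1/2},\xx)$ only the terms with $n=i$ survive. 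For such terms, $(-1)^i(\qq^{1/2})^n=(-1)^i(\qq^{1/2})^i=(-\qq^{1/2})^i$, and since all contributing $i$ for a fixed $\dd$ share the parity of $(\dd,\dd)+d_0$, we may factor out $(-\qq^{1/2})^{d_0+(\dd,\dd)}$ and are left with $(-\qq^{1/2})^{d_0+(\dd,\dd)}\sum_i \dim(\HO^i(\HHhilb(a,b,c)_{\dd}))\qq^{(i-(\dd,\dd)-d_0)/2}$. The exponent $(i-(\dd,\dd)-d_0)/2$ is an integer by the parity statement, and the dimensions are manifestly non-negative integers, so setting
\[
g_{\dd}(\qq)=\sum_{i}\dim\bigl(\HO^i(\HHhilb(a,b,c)_{\dd})\bigr)\,\qq^{(i-(\dd,\dd)-d_0)/2}\in\mathbb{N}[\qq^{\pm 1}]
\]
gives exactly the claimed form of $Z^{G(a,b,c)}(\qq^{1/2},\xx)=\chi_{Q,\wt}(\HHhilb(a,b,c),\qq^{1/2},\xx)$.

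First I would invoke the Corollary to get purity and the parity statement for $\HHhilb(a,b,c)$; this is the content-bearing input, and it rests on Theorem~\ref{purity_thm}, whose proof is deferred. Next I would unwind Definition~\ref{cf_def} carefully, checking that the finiteness hypotheses ($\gw{n}\HO(\mathcal{L}_{\dd})=0$ for $n\gg 0$ and only finitely many nonvanishing $i$ for each $n$) hold for $\HHhilb(a,b,c)$ — this was already asserted in the paragraph following Definition~\ref{cf_def}, where it is noted that $\HHhilb(a,b,c)$ satisfies the stronger finite-dimensionality condition, so the characteristic function is well-defined and the Euler specialisation $\qq^{1/2}=1$ recovers $Z^{G(a,b,c)}_{\mathrm{signed}}$. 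Then the arithmetic above finishes the argument.

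The only genuinely delicate point is bookkeeping of signs and the half-integer shift: one must confirm that the power of $(-\qq^{1/2})$ extracted is precisely $d_0+(\dd,\dd)$ and not, say, its negative or a shift thereof, which amounts to matching the parity convention in the Corollary with the exponent appearing in the statement of Theorem~\ref{thm:positivity}. Since both the Corollary and Conjecture~\ref{qconj} use the exponent $d_0+(\dd,\dd)$, this is consistent, but it is worth writing out the one-line check that $(-1)^i=(-1)^{d_0+(\dd,\dd)}$ whenever $\HO^i(\HHhilb(a,b,c)_{\dd})\neq 0$, which is exactly the parity clause of the Corollary. The main obstacle is not in this deduction at all — it is entirely upstream, in the proof of Theorem~\ref{purity_thm}, i.e.\ establishing purity and cohomological parity of $\HHst_G$ under the assumption that the exceptional locus of $\pi_G$ contracts to the origin; given that, the present theorem is a formal consequence.
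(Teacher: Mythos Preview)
Your proposal is correct and follows exactly the route the paper takes: the paper simply writes ``We deduce'' after the Corollary, leaving implicit the unwinding of Definition~\ref{cf_def} that you have spelled out. Your bookkeeping of the sign and the half-integer exponent is accurate, and the finite-dimensionality needed for $g_{\dd}\in\mathbb{N}[\qq^{\pm 1}]$ is indeed the ``stronger finite-dimensionality condition'' already recorded for $\HHhilb(a,b,c)$.
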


\begin{remark}\rm If $G=\mu_r(a,b,c)<\SL_3$ is cyclic, then it is easy to see that the condition that the exceptional locus of $\pi_G\colon \ghilb{G}\rightarrow \CC^3/G$ be mapped to the origin is equivalent to $(a,r)=(b,r)=(c,r)=1$.\end{remark}

\subsection{Proof of Theorem~\ref{purity_thm}}

In this section, the most technical in our paper, we give the proof of the purity result Theorem~\ref{purity_thm}. We begin with a geometric statement, which we used in Section \ref{refin_sec}; the main idea\footnote{Thanks to Tom Bridgeland for explaining this argument to us.} of the proof is exactly the same as the proof of Nakamura's conjecture in \cite{BKR}.

As before, let $(Q,W)$ be the McKay quiver of $G(a,b,c)<\SL(3,\C)$ with its potential (\ref{MP:eq}). Define $\nn=(n,\ldots,n)\in \N^{Q_0}$, a dimension vector on $Q$, $\zeta_0=(r-1,-1,\ldots,-1)\in \Q^{Q_0}$ our fixed stability parameter on $Q$ and $U\subset\Q^{Q_0}$ the small open neighbourhood around it in the space of stability parameters from Proposition~\ref{prop:NI}.

\begin{proposition} \label{ray_prop}Let $\zeta\in U$ be generic. 
The equivalences of categories~$\Phi$ of~\eqref{eq:FM} and its inverse~$\Psi$ induce 
for every positive integer~$n$ an isomorphism of stacks
\[ f_n\colon   \Tors^n(Y_G)\cong \Mst(Q,W)^{\zeta-\sst}_{\nn}
\]
between the moduli stack of $\zeta$-semistable representations of the QP $(Q,W)$ of dimension vector $\nn$ and the stack 
$\Tors^n(\ghilb{G})$ of torsion sheaves on $\ghilb{G}$ of length $n$. If, further, the exceptional locus of $\pi_G\colon \ghilb{G}\rightarrow \CC^3/G$ maps to the origin, then~$f_n$ restricts to an isomorphism of groupoids
\[ f_{n, \nilp}\colon   (\Tors^n(\ghilb{G})(\CC))_{\exc}\cong (\Mst(Q,W)^{\zeta-\sst}_{\nn}(\CC))_{\nilp}
\]
where on the right we restrict to nilpotent $\CC Q$-modules, and on the left we restrict to sheaves set-theoretically supported on the exceptional locus of $\pi_G\colon\ghilb{G}\rightarrow\CC^3/G$.
\end{proposition}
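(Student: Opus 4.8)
The plan is to deduce everything from the Bridgeland--King--Reid equivalence $\Phi$ of \eqref{eq:FM}, its inverse $\Psi$, and the identification $\CC[x,y,z]\rtimes G\cong\CC(Q,W)$. Write $\mathcal{B}\subset\Db{\Coh(\ghilb{G})}$ for the perverse heart $\Psi(\CC(Q,W)\lmod)$, so that $\Phi$ restricts to an equivalence of abelian categories $\mathcal{B}\xrightarrow{\sim}\CC(Q,W)\lmod$. Two facts will be used throughout. First, for $y\in\ghilb{G}(\CC)$ the object $\Phi(\OO_y)$ is the corresponding $G$-cluster module $M_y$, concentrated in degree $0$, of dimension vector $\dd_0$, and $\zeta$-stable for every $\zeta\in U$ by Proposition~\ref{prop:NI}; in particular all the $M_y$ have the same slope $\mu_\zeta(\dd_0)=\mu_\zeta(\nn)$. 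Second, $\mathcal{B}$ is closed under extensions, so every finite-length sheaf on $\ghilb{G}$ --- being an iterated extension of skyscrapers $\OO_{y_i}$ --- lies in $\mathcal{B}$; conversely, an object of $\mathcal{B}$ that is an iterated extension inside $\mathcal{B}$ of skyscrapers is a finite-length sheaf, since $\Ext^1_{\mathcal{B}}(\OO_y,\OO_{y'})=\Hom_{\Db{\Coh(\ghilb{G})}}(\OO_y,\OO_{y'}[1])=\Ext^1_{\Coh(\ghilb{G})}(\OO_y,\OO_{y'})$ and cohomology sheaves are exact.

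Next I would construct $f_n$ as the restriction of $\Phi$ to the stacks of objects in the two hearts. Since $\mathcal{B}$ is obtained from $\Coh(\ghilb{G})$ by tilting, the associated stack of families of objects of $\mathcal{B}$ is algebraic, and $\Phi$, applied relatively over a base via the Fourier--Mukai kernel, identifies it with the stack of finite-dimensional $\CC(Q,W)$-modules: a flat family of length-$n$ sheaves on $\ghilb{G}$ over $S$ goes, by cohomology and base change (each fibre being sent by $\Phi$ to a complex concentrated in degree $0$), to a flat family of $\CC(Q,W)$-modules of dimension vector $\nn$. Fibrewise, a length-$n$ sheaf $\mathcal{F}$ has a filtration with quotients $\OO_{y_i}$, so $\Phi(\mathcal{F})$ has a filtration with quotients $M_{y_i}$, each $\zeta$-stable of slope $\mu_\zeta(\nn)$; hence $\Phi(\mathcal{F})$ is $\zeta$-semistable, and as $\zeta$-semistability is an open condition this produces a morphism $f_n\colon\Tors^n(\ghilb{G})\to\Mst(Q,W)^{\zeta-\sst}_{\nn}$.

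For the inverse I would run the same argument through $\Psi$, and the only nontrivial input is that $\Psi$ sends a $\zeta$-semistable module $N$ of dimension vector $\nn$ to a length-$n$ sheaf. It suffices to know that, for generic $\zeta\in U$, every $\zeta$-stable $\CC(Q,W)$-module $M$ with $\mu_\zeta(M)=\mu_\zeta(\dd_0)$ and $\dimvect(M)\le\nn$ is a cluster module $M_y$ (in particular $\dimvect(M)=\dd_0$): then, applying the exact equivalence $\Psi$ to a $\zeta$-Jordan--Hölder filtration of $N$, we get a filtration of $\Psi(N)$ in $\mathcal{B}$ with $n$ skyscraper quotients, so $\Psi(N)$ is a length-$n$ sheaf by the second fact above, and this is a composition series of $\Psi(N)$ by points $y_1,\dots,y_n\in\ghilb{G}$. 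This classification of the relevant $\zeta$-stable objects is exactly where the choice of the neighbourhood $U$ and the genericity of $\zeta$ enter, and it is the \emph{main obstacle}; I expect it to follow by the same method as the proof of Nakamura's conjecture in \cite{BKR} --- transporting $\zeta$-stability to a stability condition on $\mathcal{B}$, comparing with the $\zeta_0$-stable locus that is identified with $\ghilb{G}$ in Proposition~\ref{prop:NI}, and excluding destabilising dimension vectors using that only controlled walls meet $U$ --- rather than from a new idea. Once it is in hand, $\Phi$ and $\Psi$ induce mutually inverse morphisms of stacks, which gives the isomorphism $f_n$.

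Finally, for the refinement to nilpotent modules and exceptionally supported sheaves, note that a cluster module $M_y$ is nilpotent if and only if $x,y,z$ act nilpotently on the $G$-cluster $\CC[x,y,z]/I_y$, i.e.\ if and only if the length-$r$ subscheme $V(I_y)$ is supported at $0\in\CC^3$, i.e.\ if and only if $y\in\pi_G^{-1}(0)$; under the hypothesis that the exceptional locus of $\pi_G$ maps to the origin, $\pi_G^{-1}(0)$ is precisely that exceptional locus. Since nilpotent representations form a Serre subcategory of $\CC(Q,W)\lmod$, a $\zeta$-semistable $\nn$-dimensional module $N$ is nilpotent if and only if all its $\zeta$-Jordan--Hölder factors are, which by the previous paragraph means that all the points $y_i$ occurring in the composition series of $\Psi(N)$ lie in $\pi_G^{-1}(0)$, i.e.\ that $\Psi(N)$ is set-theoretically supported on the exceptional locus. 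Hence $f_n$ carries $(\Tors^n(\ghilb{G})(\CC))_{\exc}$ bijectively onto $(\Mst(Q,W)^{\zeta-\sst}_{\nn}(\CC))_{\nilp}$, and restricting the stack isomorphism $f_n$ to these loci yields $f_{n,\nilp}$.
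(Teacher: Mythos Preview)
Your approach is essentially the paper's: upgrade $\Phi$ to a morphism of stacks via the relative Fourier--Mukai kernel, verify on points that length-$n$ sheaves go to $\zeta$-semistable $\nn$-dimensional modules by filtering with skyscrapers, and for the inverse reduce to classifying the $\zeta$-stable modules of the relevant slope. The paper invokes \cite[Lem.~4.3]{Bri} to pass from the pointwise check to the stack isomorphism, which is the precise form of your base-change remark. For the nilpotent refinement the paper argues more directly than you do---a sheaf supported on $\pi_G^{-1}(0)$ is annihilated by all monomials $x^ry^sz^t$ with $r+s+t\gg 0$, hence $\Phi(\mathcal{F})$ is annihilated by the central elements $\bigl(\sum_i x_i\bigr)^r\bigl(\sum_i y_i\bigr)^s\bigl(\sum_i z_i\bigr)^t$ of $\CC(Q,W)$, i.e.\ is nilpotent---but your Jordan--H\"older/Serre-subcategory route is an equally valid alternative.

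The one place your sketch does not quite land is the ``main obstacle''. You are right that it is resolved by the method of \cite[\S8]{BKR}, but the mechanism is not a wall-crossing argument about which dimension vectors can destabilise inside $U$; that alone would not rule out genuinely new stable objects appearing. The actual argument is a cohomological-amplitude plus Chern-character contradiction: if $M$ is $\zeta$-stable of dimension $\nn$ with $n>1$, then $\Hom(M,N)=\Hom(N,M)=0$ for every $\zeta$-stable $N$ of dimension $\dd_0$, so $\Ext^i(M,N)$ lives only in degrees $1,2$; this forces $\Psi(M)$ to be a two-term complex $[L^{-2}\xrightarrow{d}L^{-1}]$ of vector bundles on $\ghilb{G}$ with compactly supported cohomology, whence $d$ is injective and $\Psi(M)$ is a sheaf sitting in degree $-1$. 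That sheaf can have no summand supported off the exceptional locus $E$ (the corresponding summand of $M$ would have negative dimension vector), so $\Psi(M)[1]$ is set-theoretically supported on $E$, and filtering it by coherent sheaves on $E$ and summing Chern characters contradicts $[\Psi(M)]=-n[\OO_{\pt}]$. Hence no such $M$ exists, every $\zeta$-semistable $\nn$-dimensional module is an iterated extension of clusters $M_y$, and your inverse construction goes through.
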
 
\begin{proof}  Since the equivalence $\Phi$ is derived from a Fourier--Mukai kernel $\mathcal{Q}$, there is a canonical way to try to upgrade it to a morphism of stacks --- we take the Fourier--Mukai kernel $\mathcal{Q}\boxtimes \mathcal{O}_S$ to define a functor from $S$-flat families of coherent sheaves on $S\times \ghilb{G}$ to complexes of $\mathcal{O}_S\otimes \CC(Q,W)$-modules on $S$.  We need to show that the complexes we obtain this way are supported in cohomological degree zero and are moreover $S$-flat families of $\zeta$-semistable modules.  By \cite[Lem.4.3]{Bri} this statement follows from the claim that this construction gives an isomorphism of groupoids of $K$-points for any $K\supset \CC$, which furthermore proves that the morphism of stacks is an isomorphism.

For simplicity we assume that $K=\CC$, as for a general field the proofs are unchanged.  For $n=1$, we have $\Tors^n(Y_G)\cong Y_G$,
so the result follows from Proposition~\ref{prop:NI}. 
We prove the general case by induction on $n$. First take $\FF\in \Tors^n(\ghilb{G})$. Any such sheaf is a finite
iterated extension of point sheaves $\OO_{y_i}$ for (not necessarily distinct) points $y_i\in Y$, 
and in particular for some $y\in Y_G$ there exists an exact sequence
\[ 0 \to  \OO_y \to \FF \to \FF' \to 0.\]
The sheaf $\FF'$ is then a torsion sheaf of length $n-1$, and we get a distinguished triangle
\[ \Phi(\OO_y)\to \Phi(\FF)\to \Phi(\FF')\stackrel{[1]}\longrightarrow
\]
in $\D(\CC(Q,W)\lmod)$. By induction, $\Phi(\FF')$ and $\Phi(\OO_y)$ are $\zeta$-semistable
representations of $(Q,W)$, of dimension vectors ${\bf n-1}$ and ${\bf 1}$ respectively. Hence $\Phi(\FF)$ is also an actual representation,  and also $\zeta$-semistable. This gives a morphism 
\[  g_n\colon \Tors^n(Y)(\CC) \to \Mst(Q,W)^{\zeta-\sst}_{\nn}(\CC) 
\]
that is injective on isomorphism classes and an isomorphism on morphism spaces since $\Phi$ is an equivalence of categories. 

To show that $g_n$ is surjective, we follow~\cite[Section 8]{BKR}. Suppose that $M$ is a $\zeta$-stable $\CC(Q,W)$-representation of dimension vector $\nn$, for $n>1$. Then for all $\zeta$-(semi)stable representations $N$ 
of dimension vector $(1,1,1)$, we have $\Hom(M,N)=\Hom(N,M)=0$, so $\Ext^i(M,N)$ lives in degrees
$1$ and $2$ only. Thus $\Psi(M)$ is a 2-term complex of vector bundles $[L^{-2}\xrightarrow{d}
L^{-1}]$ on $Y_G$.  The cohomology of this complex is compactly supported, and so $d$ is
injective. So $\Psi(M)$ is a coherent sheaf on $Y$ living in degree~$(-1)$.  This sheaf cannot have a summand supported away from the exceptional locus $E$, since the corresponding summand of $M$ would have negative dimension vector.  So $\Psi(M)[1]$ is set-theoretically supported on $E$, and thus has a filtration by coherent sheaves on $E$.  The Chern characters of these sheaves sum to $-n$ times the chern Character of the skyscraper sheaf of a point, which is a contradiction. Hence any semistable representation of dimension vector $\nn$ is strictly semistable and in particular an iterated extension of stable representations of dimension vector $\dd_0$. The converse of the inductive argument above then shows that its image under $\Psi$ is a torsion sheaf on $Y$. Thus $g_n$ is an isomorphism of groupoids as required.

Finally, suppose that $\FF$ in the above argument is supported along the exceptional locus.  By our assumption on $\pi$, $\mathcal{F}$ is annihilated by all monomials $x^ry^sz^t$, considered as functions on $Y_G$, for $r+s+t\gg 0$.  As such, $\Phi(\mathcal{F})$ is annihilated by multiplication by the central elements 
\[\left(\sum_{0\leq i\leq r-1}x_i\right)^r\left(\sum_{0\leq i\leq r-1}y_i\right)^s
\left(\sum_{0\leq i\leq r-1}z_i\right)^t
\]
in $\mathbb{C}(Q,W)$, for $r+s+t\gg0$ i.e. it is nilpotent.  The converse works in the same way. \end{proof}

\begin{proof}[Proof of Theorem~\ref{purity_thm}]
We introduce an auxiliary mixed Hodge structure 
\[
\HHst(a,b,c)^{\znilp}=\bigoplus_{\dd\in\mathbb{N}^{Q_0}}\HO_c\left(\Mst_{\dd}(Q),\phi_{\Tr(W)_\dd}\lvert_{\znilp}\right)
\]
by restricting the vanishing cycle mixed Hodge module to the reduced substack, the $\CC$-points of which are defined by the condition that the action of $z\in\CC[x,y,z]\rtimes G$ is nilpotent.  We will prove that the purity and parity statements of the theorem are true of both $\HHst(a,b,c)$ and $\HHst(a,b,c)^{\znilp}$.  By \cite[Theorem B]{DaMe} we have the following variant of (\ref{WCI1}):
\begin{align}
\label{Lndecomp}
\HHst(a,b,c)^{\znilp}\cong &\bigotimes_{-\infty\xrightarrow{\theta} \infty}^{\tw} \HO_c\left(\Mst_{\theta}^{\sst}(Q),\phi_{\Tr(W)}\lvert_{\znilp}\right).
\end{align}
The parity and purity statements for the left hand side of (\ref{WCI1}) and (\ref{Lndecomp}) are equivalent to the same parity and purity statements for the components of the tensor product decompositions.  This is because purity is preserved by tensor product, $\LL^{1/2}$ is pure, and the degree shift introduced by the twisted monoidal product is equal to the part of the following equation that we have put in square brackets:
\begin{align*}
(\dd,\dd)+(\ee,\ee)+\left[(\ee,\dd)-(\dd,\ee)\right]=(\dd+\ee,\dd+\ee)\ \mod 2.
\end{align*}

Under the derived equivalence $\Phi$, complexes of $\CC(Q,W)$-modules with finite-dimensional cohomology correspond to complexes of coherent sheaves with compactly supported cohomology sheaves, and nilpotency for the cohomology of $\CC(Q,W)$-modules corresponds to cohomology sheaves being set-theoretically supported on the exceptional locus of $\pi_G\colon \ghilb{G}\rightarrow \CC^3/G$.  

Without affecting the definition of $\zeta$-stability we may adjust our stability condition $\zeta$ so that the dimension vector $\dd_0=(1,\ldots,1)$ has slope zero, by adding some scalar multiple of $(1,\ldots,1)$ to $\zeta$.  

Let $\dd$ be a dimension vector of nonzero slope with respect to $\zeta$.  If $M$ is $\zeta$-semistable and $\dd$-dimensional, the sheaf $\Psi(M)$ must be supported set-theoretically on the exceptional locus of $\pi_G$, for if $\mathcal{G}\subset \Psi(M)$ is the summand supported away from the exceptional locus, then $\dim(\supp(\Phi(\mathcal{G})))=0$ and so $\Phi(\mathcal{G})$ is a direct summand of $\rho$ with slope zero.  It follows that the support of $\phi_{\Tr(W)_{\dd}}$, restricted to the semistable locus, only contains nilpotent modules, and in particular, modules for which the action of $z$ is nilpotent.  So for $\theta\neq 0$ there is an equality
\begin{equation}
\label{ne}
\HO_c\left(\Mst_{\theta}^{\sst}(Q),\phi_{\Tr(W)}\right)=\HO_c\left(\Mst_{\theta}^{\sst}(Q),\phi_{\Tr(W)}\lvert_{\znilp}\right).
\end{equation}

If we let $\CC^*$ act on the edges of $Q$ by scaling the edges $x_1,\ldots,x_r$ and leaving the other edges invariant, the function $\Tr(W)$ is a weight one eigenfunction for $\CC^*$, and so as a special case of the dimensional reduction isomorphism\footnote{I.e. we ``reduce'' along the $x$ direction, from a categorically three-dimensional moduli problem to a two-dimensional one.} we have
\begin{align*}
\HHst(a,b,c)_{\dd}^{\znilp}\cong &\HO_c(\mathfrak{R}_\dd,\mathbb{Q})\otimes\LL^{(\dd,\dd)/2+r},
\end{align*}
where $r=\sum_{0\leq i\leq r-1} d_i d_{i+c}$ and
\begin{align*}
\mathfrak{R}_{\dd}=&\Rep_{\dd}^{\znilp}(\CC[y,z]\rtimes G),
\end{align*}
is the stack of $\CC[y,z]\rtimes G$-modules for which $z$ acts nilpotently, and the underlying $G$-representation is given by
\[
\bigoplus_{0\leq i\leq r-1}\rho_i^{\oplus d_i}.
\]
Because of the constraint on $z$, for a given $\dd$ there are finitely many possibilities for the isomorphism class of the underlying $\CC[z]\rtimes G$-representation.  We can partition the stack $\mathfrak{R}_\dd$ according to the isomorphism type of this representation, and it is straightforward to check as in \cite{Dav2} that each piece of this partition has pure compactly supported cohomology, supported entirely in even degree.  From the resulting  long exact sequences in compactly supported cohomology, it follows that $\mathfrak{R}_{\dd}$ has pure compactly supported cohomology, supported in even degree.  It follows that $\HHst(a,b,c)^{\znilp}_G$ is pure, supported in degrees of the required parity, and so the same is true of the terms in (\ref{WCI1}), for $\theta\neq 0$, as well as the right hand side of (\ref{Lndecomp}) for all $\theta$.

Let $\theta=0$, so that by Proposition \ref{ray_prop} the only semistable $\CC(Q,W)$-modules of slope $\theta$ have dimension vector $\nn$ for some $n$ and correspond under the derived equivalence $\Phi$ to coherent sheaves with zero-dimensional support.  As in \cite{Dav3} we can write
\begin{align}
\label{Int}
\HO_c\left(\Mst_{\theta}^{\sst}(Q),\phi_{\Tr(W)}\right)\cong&\Sym\left(\bigoplus_{n\geq 1} \HO_c(\ghilb{G},\mathcal{L}_n)\otimes\HO_c(\pt/\CC^*)\otimes\LL^{1/2}\right)\\
\label{nInt}
\HO_c\left(\Mst_{\theta}^{\sst}(Q),\phi_{\Tr(W)}\lvert_{\nilp}\right)\cong&\Sym\left(\bigoplus_{n\geq 1} \HO_c(\ghilb{G},\mathcal{L}_n\lvert_{\nilp})\otimes\HO_c(\pt/\CC^*)\otimes\LL^{1/2}\right)
\end{align}
where $\mathcal{L}_n$ is a pure rank one variation of mixed Hodge structure on $\ghilb{G}$, and $\mathcal{L}_n\lvert_{\nilp}$ is its restriction to the locus where $z$ acts nilpotently.  Since $(\nn,\nn)=0$, we need to show that the left hand side of (\ref{Int}) is pure, of even degree, which is equivalent to showing that each $\HO_c(\ghilb{G},\mathcal{L}_n)$ is pure of odd degree, because of the extra twist by $\LL^{1/2}$ on the right hand side.  

We claim that in fact $\mathcal{L}_n\lvert_{\nilp}\cong \underline{\mathbb{Q}}_{Y_G}\otimes\LL^{-3/2}$.  This follows from the (analytic) local triviality of this mixed Hodge module, along with the claim that $\pi_1(\ghilb{G})=1$.  For this claim, note that by \cite{Nak}, $\ghilb{G}$ is a smooth toric variety, and since it has Euler characteristic~$r$, the associated fan contains a 3-dimensional cone.  The claim then follows from 
\cite[Thm 12.1.10]{CLS}.  Finally, then, we have to prove that either of the isomorphic mixed Hodge structures
\[
\HO_c(\ghilb{G},\mathbb{Q})\otimes\LL^{-3/2}\cong \left(\HO(\ghilb{G},\mathbb{Q})\otimes\LL^{-3/2}\right)^*
\]
are pure, with cohomology in entirely odd degree.
The purity, of weight $i$, of $\HO^i(\ghilb{G},\mathbb{Q})$ follows by a standard argument: the $m$th graded piece of the weight filtration is zero for $m<i$ since $\ghilb{G}$ is smooth, on the other hand, $Y_G$ contracts onto its exceptional locus $E=\pi_G^{-1}(0)$, and the $m$th graded piece of the weight filtration on $\HO^i(E,\mathbb{Q})$ is zero for $m>i$ since $E$ is proper.  

For the parity statement, from the composition of isomorphisms
\[
\HO(\ghilb{G},\mathbb{Q})\cong \HO(E,\mathbb{Q})\cong \HO_c(E,\mathbb{Q})
\]
we deduce that it is enough to show that $\HO_c(E,\mathbb{Q})$ is concentrated in even degree.  Let $Z\subset \ghilb{G}$ be the locus cut out by the equation $z=0$.  Then the complement $U=Z\setminus E$ admits a Galois cover from $\mathbb{C}^2\setminus \{0\}$, i.e. the locus inside $\mathbb{C}^3$ for which $z=0$ and one of $x,y$ are nonzero.  In particular, the only impure compactly supported cohomology of $U$ is concentrated in odd cohomological degree, as the same is true of $\mathbb{C}^2\setminus \{0\}$.  Moreover, the compactly supported cohomology of both $Z$ and $E$ are pure -- purity for $Z$ follows from purity of the left hand side of (\ref{nInt}) and our description of $\mathcal{L}_n$.  Then since the morphisms in the long exact sequence in cohomology
\[
\rightarrow \HO^i_c(U,\mathbb{Q})\rightarrow \HO^i_c(Z,\mathbb{Q})\rightarrow \HO^i_c(E,\mathbb{Q})\rightarrow
\]
are morphisms of mixed Hodge structures it follows that $\HO_c(E,\mathbb{Q})$ is concentrated in even degree.
\end{proof}

\newpage 

\section*{Appendix}

Here we record the coefficients of the reduced $\mu_3(1,1,1)$ partition function up to total degree $24$.  

{\scriptsize

\[\begin{split}Z^{\mu_3(1,1,1)}_\red(t_0,t_1,t_2)&=1+t_0+3t_0t_1+3t_0t_1^2+t_0t_1^3+9t_0t_1^2t_2+6t_0t_1^3t_2+9t_0t_1^2t_2^2+
9t_0^2t_1^2t_2^2+15t_0t_1^3t_2^2+
\\& 3t_0t_1^2t_2^3+12t_0^2t_1^3t_2^2+12t_0^2t_1^2t_2^3+20t_0t_1^3t_2^3+18t_0^3t_1^2t_2^3+
46t_0^2t_1^3t_2^3+15t_0t_1^3t_2^4+12t_0^4t_1^2t_2^3+
\\&44t_0^3t_1^3t_2^3+66t_0^2t_1^3t_2^4+
6t_0t_1^3t_2^5+3t_0^5t_1^2t_2^3+36t_0^4t_1^3t_2^3+15t_0^3t_1^4t_2^3+114t_0^3t_1^3t_2^4+42t_0^2t_1^3t_2^5+
\\&t_0t_1^3t_2^6+18t_0^5t_1^3t_2^3+36t_0^4t_1^4t_2^3+
96t_0^4t_1^3t_2^4+45t_0^3t_1^4t_2^4+126t_0^3t_1^3t_2^5+
10t_0^2t_1^3t_2^6+45t_0^5t_1^4t_2^3+
\\&12t_0^4t_1^5t_2^3+39t_0^5t_1^3t_2^4+153t_0^4t_1^4t_2^4+210t_0^4t_1^3t_2^5+
45t_0^3t_1^4t_2^5+45t_0^3t_1^3t_2^6+60t_0^5t_1^5t_2^3+\\&6t_0^6t_1^3t_2^4+144t_0^5t_1^4t_2^4+54t_0^4t_1^5t_2^4+
210t_0^5t_1^3t_2^5+234t_0^4t_1^4t_2^5+120t_0^4t_1^3t_2^6+15t_0^3t_1^4t_2^6+\\&45t_0^5t_1^6t_2^3+36t_0^6t_1^4t_2^4+
225t_0^5t_1^5t_2^4+126t_0^6t_1^3t_2^5+486t_0^5t_1^4t_2^5+90t_0^4t_1^5t_2^5+210t_0^5t_1^3t_2^6+\\&120t_0^4t_1^4t_2^6+
18t_0^5t_1^7t_2^3+90t_0^6t_1^5t_2^4+201t_0^5t_1^6t_2^4+42t_0^7t_1^3t_2^5+504t_0^6t_1^4t_2^5+468t_0^5t_1^5t_2^5+
\\&252t_0^6t_1^3t_2^6+420t_0^5t_1^4t_2^6+66t_0^4t_1^5t_2^6+3t_0^5t_1^8t_2^3+120t_0^6t_1^6t_2^4+108t_0^5t_1^7t_2^4+
6t_0^8t_1^3t_2^5+\\&261t_0^7t_1^4t_2^5+354t_0^5t_1^6t_2^5+846t_0^6t_1^5t_2^5+210t_0^7t_1^3t_2^6+840t_0^6t_1^4t_2^6+
429t_0^5t_1^5t_2^6+18t_0^4t_1^5t_2^7+\\&90t_0^6t_1^7t_2^4+27t_0^5t_1^8t_2^4+54t_0^8t_1^4t_2^5+684t_0^7t_1^5t_2^5+
828t_0^6t_1^6t_2^5+270t_0^5t_1^7t_2^5+120t_0^8t_1^3t_2^6+\\&1050t_0^7t_1^4t_2^6+1296t_0^6t_1^5t_2^6+306t_0^5t_1^6t_2^6+
126t_0^5t_1^5t_2^7+36t_0^6t_1^8t_2^4+216t_0^8t_1^5t_2^5+975t_0^7t_1^6t_2^5+\\&558t_0^6t_1^7t_2^5+108t_0^5t_1^8t_2^5+
45t_0^9t_1^3t_2^6+840t_0^8t_1^4t_2^6+2319t_0^7t_1^5t_2^6+1420t_0^6t_1^6t_2^6+360t_0^5t_1^7t_2^6+\\&378t_0^6t_1^5t_2^7+
129t_0^5t_1^6t_2^7+ 6 t_0^6 t_1^9 t_2^4  + 504 t_0^8 t_1^6 t_2^5+ 810 t_0^7 t_1^7 t_2^5  + 
 252 t_0^6 t_1^8 t_2^5 + 10 t_0^{10} t_1^3 t_2^6  + 
\\& 420 t_0^9 t_1^4 t_2^6  + 2586 t_0^8 t_1^5 t_2^6  + 3207 t_0^7 t_1^6 t_2^6 + 
 1284 t_0^6 t_1^7 t_2^6  + 252 t_0^5 t_1^8 t_2^6  + 
 630 t_0^7 t_1^5 t_2^7  +852 t_0^6 t_1^6 t_2^7 + 
 \\&270 t_0^5 t_1^7 t_2^7  + 21 t_0^5 t_1^6 t_2^8  + 756 t_0^8 t_1^7 t_2^5 + 387 t_0^7 t_1^8 t_2^5  + 
  54 t_0^6 t_1^9 t_2^5 + t_0^{11} t_1^3 t_2^6  + 120 t_0^{10} t_1^4 t_2^6  + 
 1755 t_0^9 t_1^5 t_2^6  + \\&4668 t_0^8 t_1^6 t_2^6  +  3138 t_0^7 t_1^7 t_2^6  +
  795 t_0^6 t_1^8 t_2^6  + 
 630 t_0^8 t_1^5 t_2^7  + 2355 t_0^7 t_1^6 t_2^7  + 
  1404 t_0^6 t_1^7 t_2^7  + 378 t_0^5 t_1^8 t_2^7  + 
  \\& 147 t_0^6 t_1^6 t_2^8  +108 t_0^5 t_1^7 t_2^8  + 756 t_0^8 t_1^8 t_2^5  + 
  96 t_0^7 t_1^9 t_2^5  + 15 t_0^{11} t_1^4 t_2^6  + 660 t_0^{10} t_1^5 t_2^6  + 
 4320 t_0^9 t_1^6 t_2^6  + \\& 5622 t_0^8 t_1^7 t_2^6  + 
  2022 t_0^7 t_1^8 t_2^6  + 216 t_0^6 t_1^9 t_2^6  + 
 378 t_0^9 t_1^5 t_2^7  + 3480 t_0^8 t_1^6 t_2^7  + 
 4140 t_0^7 t_1^7 t_2^7  + 1494 t_0^6 t_1^8 t_2^7  + \\&
 441 t_0^7 t_1^6 t_2^8  + 738 t_0^6 t_1^7 t_2^8  + 378 t_0^5 t_1^8 t_2^8  + 
 18 t_0^5 t_1^7 t_2^9  + 504 t_0^8 t_1^9 t_2^5  +  9 t_0^7 t_1^{10} t_2^5  + 
 105 t_0^{11} t_1^5 t_2^6  + \\&2200 t_0^{10} t_1^6 t_2^6  + 
 6930 t_0^9 t_1^7 t_2^6  + 4920 t_0^8 t_1^8 t_2^6  + 
 693 t_0^7 t_1^9 t_2^6  + 126 t_0^{10} t_1^5 t_2^7  + 
 2895 t_0^9 t_1^6 t_2^7  + 8172 t_0^8 t_1^7 t_2^7  + 
 \\&4365 t_0^7 t_1^8 t_2^7  + 504 t_0^6 t_1^9 t_2^7  + 
 735 t_0^8 t_1^6 t_2^8  + 2412 t_0^7 t_1^7 t_2^8  + 
 1845 t_0^6 t_1^8 t_2^8  + 150 t_0^6 t_1^7 t_2^9  + 
 252 t_0^5 t_1^8 t_2^9  + \\&216 t_0^8 t_1^{10} t_2^5  + 
 455 t_0^{11} t_1^6 t_2^6  + 4950 t_0^{10} t_1^7 t_2^6  + 
 7560 t_0^9 t_1^8 t_2^6  + 3270 t_0^8 t_1^9 t_2^6  + 
 81 t_0^7 t_1^{10} t_2^6  + 18 t_0^{11} t_1^5 t_2^7  + 
 \\&1284 t_0^{10} t_1^6 t_2^7  + 9612 t_0^9 t_1^7 t_2^7  + 
 10800 t_0^8 t_1^8 t_2^7  + 2220 t_0^7 t_1^9 t_2^7  + 
 735 t_0^9 t_1^6 t_2^8  + 4680 t_0^8 t_1^7 t_2^8  + 
 \\&5346 t_0^7 t_1^8 t_2^8  + 756 t_0^6 t_1^9 t_2^8  + 
 546 t_0^7 t_1^7 t_2^9  + 1536 t_0^6 t_1^8 t_2^9  + 108 t_0^5 t_1^8 t_2^{10} + 
  +54 t_0^8 t_1^{11} t_2^5  + 
  \\&1365 t_0^11 t_1^7 t_2^6  + 
 7920 t_0^{10} t_1^8 t_2^6  + 5670 t_0^9 t_1^9 t_2^6  + 
 1596 t_0^8 t_1^{10} t_2^6  + 237 t_0^11 t_1^6 t_2^7  + 
 5886 t_0^{10} t_1^7 t_2^7  + 
 \\& 18045 t_0^9 t_1^8 t_2^7  + 
 9114 t_0^8 t_1^9 t_2^7  + 324 t_0^7 t_1^10 t_2^7  + 
 441 t_0^{10} t_1^6 t_2^8  + 5580 t_0^9 t_1^7 t_2^8  + 
 11934 t_0^8 t_1^8 t_2^8  + \\& 4143 t_0^7 t_1^9 t_2^8 + 
 1134 t_0^8 t_1^7 t_2^9  + 4497 t_0^7 t_1^8 t_2^9  + 
 756 t_0^6 t_1^9 t_2^9  + 837 t_0^6 t_1^8 t_2^{10}  + 27 t_0^5 t_1^8 t_2^{11}
 +\\&  O((t_0,t_1,t_2)^{25}).
\end{split}\]
}

\end{document}